\newtheorem{theorem}{Theorem}
\newtheorem{lemma}[theorem]{Lemma}
\newtheorem{corollary}[theorem]{Corollary} 
\newtheorem{remark}[theorem]{Remark}
\newtheorem{example}[theorem]{Example}
\newtheorem{problem}[theorem]{Problem}
\newtheorem{definition}[theorem]{Definition}
\newtheorem{conjecture}[theorem]{Conjecture}
\newtheorem{preproof}{{\bf Proof}}
\newenvironment{proof}[1]{\begin{preproof}{\rm
			#1}\hfill{$\blacksquare$}}{\end{preproof}}
\newtheorem{presproof}{{\bf Sketch of Proof.\ }}
\newtheorem{prepro}{{\bf Proposition}}
\title{Simultaneous coloring of vertices and incidences of graphs}
{\small
	\author{Mahsa Mozafari-Nia$^a$, Moharram N. Iradmusa$^{a,b}$\\
		{\small $^{a}$Department of Mathematical Sciences, Shahid Beheshti University,}\\
		{\small G.C., P.O. Box 19839-63113, Tehran, Iran.}\\
		{\small $^{b}$School of Mathematics, Institute for Research in Fundamental Sciences (IPM),}\\
		{\small P.O. Box: 19395-5746, Tehran, Iran.}}
	
	
\begin{document}
		\maketitle
		\begin{abstract}
			An $n$-subdivision of a graph $G$ is a graph constructed by replacing a path of length $n$ instead of each edge of $G$ and an $m$-power of $G$ is a graph with the same vertices as $G$ and any two vertices of $G$ at distance at most $m$ are adjacent. The graph $G^{\frac{m}{n}}$ is the $m$-power of the $n$-subdivision of $G$. In [M. N. Iradmusa, M. Mozafari-Nia, A note on coloring of $\frac{3}{3}$-power of subquartic graphs, Vol. 79, No.3, 2021] it was conjectured that the chromatic number of $\frac{3}{3}$-power of graphs with maximum degree $\Delta\geq 2$ is at most $2\Delta+1$. In this paper, we introduce the simultaneous coloring of vertices and incidences of graphs and show that the minimum number of colors for simultaneous proper coloring of vertices and incidences of $G$, denoted by $\chi_{vi}(G)$, is equal to the chromatic number of $G^{\frac{3}{3}}$. Also by determining the exact value or the upper bound for the said parameter, we investigate the correctness of the conjecture for some classes of graphs such as $k$-degenerated graphs, cycles, forests, complete graphs and regular bipartite graphs. In addition, we investigate the relationship between this new chromatic number and the other parameters of graphs.
		\end{abstract}
		\section{Introduction}\label{sec1}
		All graphs we consider in this paper are simple, finite and undirected. For a graph $G$, we denote its vertex set, edge set and face set (if $G$ is planar) by $V(G)$, $E(G)$ and $F(G)$ respectively. Maximum degree, independence Number
		and maximum size of cliques of $G$ are denoted by $\Delta(G)$, $\alpha(G)$ and $\omega(G)$, respectively.  Also, for vertex $v\in V(G)$, $N_G(v)$ is the set of neighbors of $v$ in $G$ and any vertex of degree $k$ is called a $k$-vertex.. From now on,  we use the notation $[n]$ instead of $\{1,\ldots,n\}$. We mention some of the definitions that are referred to throughout the note and for other necessary definitions and notations we refer the reader to a standard text-book \cite{bondy}.\\
		A mapping $c$ from $V(G)$ to $[k]$ is a proper $k$-coloring of $G$, if $c(v)\neq c(u)$ for any two adjacent vertices. A minimum integer $k$ that $G$ has a proper $k$-coloring is the chromatic number of $G$ and denoted by $\chi(G)$. Instead of the vertices, we can color the edges of graph. A mapping $c$ from $E(G)$ to $[k]$ is a proper edge-$k$-coloring of $G$, if $c(e)\neq c(e')$ for any two adjacent edges $e$ and $e'$ ($e\cap e'\neq\varnothing$). A minimum integer $k$ that $G$ has a proper edge-$k$-coloring is the chromatic index of $G$ and denoted by $\chi'(G)$.\\
		Another coloring of graph is the coloring of incidences of graphs. The concepts of incidence, incidence graph and incidence coloring were introduced by Brualdi and Massey in 1993 \cite{Bruldy}. In graph $G$, any pair $i=(v,e)$ is called an incidence of $G$, if $v\in V(G)$, $e\in E(G)$ and $v\in e$. Also in this case the elements $v$ and $i$ are called incident. For any edge $e=\{u,v\}$, we call $(u,e)$, the first incidence of $u$ and $(v,e)$, the second incidence of $u$. In general, for a vertex $v\in V(G)$, the set of the first incidences and the second incidences of $v$ is denoted by $I_1^G(v)$ and $I_2^G(v)$, respectively. Also let $I_G(v)=I_1^G(v)\cup I_2^G(v)$ , $I_1^G[v]=\{v\}\cup I_1^G(v)$ and $I_G[v]=\{v\}\cup I_G(v)$. Sometime we remove the index $G$ for simplicity.\\
		Let $I(G)$ be the set of the incidences of $G$. The incidence graph of $G$, denoted by $\mathcal{I}(G)$, is a graph with vertex set $V(\mathcal{I}(G))=I(G)$ such that two incidences $(v,e)$ and $(w,f)$ are adjacent in $\mathcal{I}(G)$ if $(i)$ $v=w$, or $(ii)$ $e=f$, or $(iii)$ $\{v,w\}=e$ or $f$. Any proper $k$-coloring of $\mathcal{I}(G)$ is an incidence $k$-coloring of $G$. The incidence chromatic number of $G$, denoted by $\chi_i(G)$, is the minimum integer $k$ such that $G$ is incidence $k$-colorable.\\
		Total coloring is one of the first simultaneous colorings of graphs. A mapping $c$ from $V(G)\cup E(G)$ to $[k]$ is a proper total-$k$-coloring of $G$, if $c(x)\neq c(y)$ for any two adjacent or incident elements $x$ and $y$. A minimum integer $k$ that $G$ has a proper total-$k$-coloring is the total chromatic number of $G$ and denoted by $\chi''G)$ \cite{behzad}. In 1965, Behzad conjectured that  $\chi''(G)$ never exceeds $\Delta(G)+2$.\\
		Another simultaneous coloring began in the mid-1960s with Ringel \cite{ringel}, who conjectured that the vertices and faces of a planar graph may be colored with six colors such that every two adjacent or incident of them are colored differently. In addition to total coloring which is defined for any graph, there are three other types of simultaneous colorings of a planar graph $G$, depending on the use of at least two sets of the sets $V(G)$, $E(G)$, and $F(G)$ in the coloring. These colorings of graphs have been studied extensively in the literature and there are many results and also many open problems. For further information see \cite{borodin, chan, wang1,wang2}.\\
		Inspired by the total coloring of a graph $G$ and its connection with the fractional power of graphs which was introduced in \cite{paper13}, in this paper we define a new kind of simultaneous coloring of graphs. In this type of coloring, we color simultaneously the vertices and the incidences of a graph.
		\begin{definition}\label{verinccol}
			Let $G$ be a graph. A $vi$-simultaneous proper $k$-coloring of $G$ is a coloring $c:V(G)\cup I(G)\longrightarrow[k]$ in which any two adjacent or incident elements in the set $V(G)\cup I(G)$ receive distinct colors. The $vi$-simultaneous chromatic number, denoted by $\chi_{vi}(G)$, is the smallest integer k such that $G$ has a $vi$-simultaneous proper $k$-coloring.
		\end{definition}
		\begin{example}
			{\rm Suppose cycles of order 3 and 4. we know that $\chi(C_3)=\chi'(C_3)=3$ and $\chi''(C_3)=\chi_i(C_3)=4$. But four colors are not enough for $vi$-simultaneous proper coloring of $C_3$ and easily one can show that $\chi_{vi}(C_3)=5$. For the cycle of order four, we have $\chi(C_4)=\chi'(C_4)=2$ and $\chi''(C_4)=\chi_i(C_4)=4$. In addition, Figure \ref{C4} shows that $\chi_{vi}(C_4)=4$.}
		\end{example}
		\begin{figure}[h]
			\begin{center}
					\begin{tikzpicture}[scale=1.0]
						\tikzset{vertex/.style = {shape=circle,draw, line width=1pt, opacity=1.0, inner sep=2pt}}
						\tikzset{vertex1/.style = {shape=circle,draw, fill=black, line width=1pt,opacity=1.0, inner sep=2pt}}
						\tikzset{arc/.style = {->,> = latex', line width=1pt,opacity=1.0}}
						\tikzset{edge/.style = {-,> = latex', line width=1pt,opacity=1.0}}
						\node[vertex1] (a) at (0,0) {};
						\node  at (-0.3,-0.3) {$1$};
						\node[vertex] (b) at (1,0) {};
						\node  at (1,-0.4) {$2$};
						\node[vertex] (c) at  (2,0) {};
						\node  at (2,-0.4) {$3$};
						\node[vertex1] (d) at  (3,0) {};
						\node  at (3.3,-0.3) {$4$};
						\node[vertex] (e) at  (3,1) {};
						\node  at (3.4,1) {$1$};
						\node[vertex] (f) at  (3,2) {};
						\node  at (3.4,2) {$2$};
						\node[vertex1] (g) at (3,3) {};
						\node  at (3.3,3.3) {$3$};
						\node[vertex] (h) at (2,3) {};
						\node  at (2,3.4) {$4$};
						\node[vertex] (i) at (1,3) {};
						\node  at (1,3.4) {$1$};
						\node[vertex1] (j) at (0,3) {};
						\node  at (-0.3,3.3) {$2$};
						\node[vertex] (k) at (0,2) {};
						\node  at (-0.4,2) {$3$};
						\node[vertex] (m) at (0,1) {};
						\node  at (-0.4,1) {$4$};
						\draw[edge] (a)  to (b);
						\draw[edge] (b)  to (c);
						\draw[edge] (c)  to (d);
						\draw[edge] (d)  to (e);
						\draw[edge] (e)  to (f);
						\draw[edge] (f)  to (g);
						\draw[edge] (g)  to (h);
						\draw[edge] (h)  to (i);
						\draw[edge] (i)  to (j);
						\draw[edge] (j)  to (k);
						\draw[edge] (k)  to (m);
						\draw[edge] (m)  to (a);
						\node[vertex1] (a1) at (5,0) {};
						\node  at (4.7,-0.3) {$a$};
						\node[vertex] (b1) at (6,0) {};
						\node  at (6,-0.4) {$(a,b)$};
						\node[vertex] (c1) at  (7,0) {};
						\node  at (7,-0.4) {$(b,a)$};
						\node[vertex1] (d1) at  (8,0) {};
						\node  at (8.3,-0.3) {$b$};
						\node[vertex] (e1) at  (8,1) {};
						\node  at (8.6,1) {$(b,c)$};
						\node[vertex] (f1) at  (8,2) {};
						\node  at (8.6,2) {$(c,b)$};
						\node[vertex1] (g1) at (8,3) {};
						\node  at (8.3,3.3) {$c$};
						\node[vertex] (h1) at (7,3) {};
						\node  at (7,3.4) {$(c,d)$};
						\node[vertex] (i1) at (6,3) {};
						\node  at (6,3.4) {$(d,c)$};
						\node[vertex1] (j1) at (5,3) {};
						\node  at (4.7,3.3) {$d$};
						\node[vertex] (k1) at (5,2) {};
						\node  at (4.4,2) {$(d,a)$};
						\node[vertex] (m1) at (5,1) {};
						\node  at (4.4,1) {$(a,d)$};
						\draw[edge] (a1)  to (b1);
						\draw[edge] (b1)  to (c1);
						\draw[edge] (c1)  to (d1);
						\draw[edge] (d1)  to (e1);
						\draw[edge] (e1)  to (f1);
						\draw[edge] (f1)  to (g1);
						\draw[edge] (g1)  to (h1);
						\draw[edge] (h1)  to (i1);
						\draw[edge] (i1)  to (j1);
						\draw[edge] (j1)  to (k1);
						\draw[edge] (k1)  to (m1);
						\draw[edge] (m1)  to (a1);
					\end{tikzpicture}
				\caption{$vi$-simultaneous proper $4$-coloring of $C_4$. Black vertices are corresponding to the vertices of $G$ and white vertices are corresponding to the incidences of $C_4$. The incidence $(u,\{u,v\})$ is denoted by $(u,v)$.}
				\label{C4}
			\end{center}
		\end{figure}
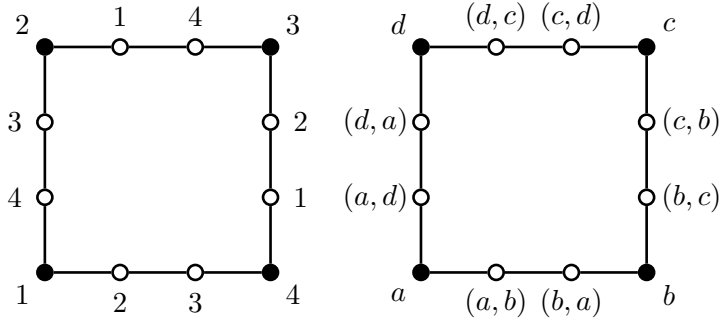
		Similar to incidence coloring, we can define some special kind of $vi$-simultaneous coloring of graphs according to the number of colors that appear on the incidences of each vertex.
		\begin{definition}\label{(k,l)IncidenceCol}
			A $vi$-simultaneous proper $k$-coloring of a graph $G$ is called $vi$-simultaneous $(k,s)$-coloring of $G$ if for any vertex $v$, the number of colors used for coloring $I_2(v)$ is at most $s$. We denote by $\chi_{vi,s}(G)$ the smallest number of colors required for a $vi$-simultaneous $(k,s)$-coloring of $G$.
		\end{definition}
		For example, the $vi$-simultaneous coloring of $C_4$ in Figure \ref{C4} is a $vi$-simultaneous $(4,1)$-coloring and so $\chi_{vi,1}(C_4)=4$. Observe that $\chi_{vi,1}(G)\geq\chi_{vi,2}(G)\geq\cdots\geq\chi_{vi,\Delta}(G)=\chi_{vi}(G)$ for every graph $G$ with maximum degree $\Delta$.
		\subsection{Fractional power of graph}
		For the edge coloring and total coloring of any graph $G$, two corresponding graphs are defined. In the line graph of $G$, denoted by $\mathcal{L}(G)$, the vertex set is $E(G)$ and two vertex $e$ and $e'$ are adjacent if $e\cap e'\neq\varnothing$. In the total graph of $G$, denoted by $\mathcal{T}(G)$, vertex set is $V(G)\cup E(G)$ and two vertices are adjacent if and only if they are adjacent or incident in $G$. According to these definitions, we have $\chi'(G)=\chi(\mathcal{L}(G))$ and $\chi''(G)=\chi(\mathcal{T}(G))$. Therefore, edge coloring and total coloring of graphs can be converted to vertex coloring of graphs.\\
		Motivated by the concept of total graph, the fractional power of a graph was first introduced in \cite{paper13}. Let $G$ be a graph and $k$ be a positive integer. The \emph{$k$-power of $G$}, denoted by $G^k$, is defined on the vertex set $V(G)$ by adding edges joining any two distinct vertices $x$ and $y$ with distance at most $k$. Also the $k$-subdivision of $G$, denoted by $G^{\frac{1}{k}}$, is constructed by replacing each edge $xy$ of $G$ with a path of length $k$ with the vertices $x=(xy)_0,(xy)_1,\ldots, (xy)_{k-1},y=(xy)_k$. Note that the vertex $(xy)_l$ has distance $l$ from the vertex $x$, where $l\in \{0,1,\ldots,k\}$. Also, $(xy)_l=(yx)_{k-l}$, for any $l\in \{0,1,\ldots,k\}$. The vertices $(xy)_0$ and $(xy)_k$ are called terminal vertices and the others are called internal vertices. We refer to these vertices in short, $t$-vertices and  $i$-vertices of $G$, respectively. Now the fractional power of graph $G$ is defined as follows.
		\begin{definition}\label{def1}
			Let $G$ be a graph and  $m,n\in \mathbb{N}$. The graph $G^{\frac{m}{n}}$ is defined to be the $m$-power of the $n$-subdivision of $G$. In other words, $G^{\frac{m}{n}}=(G^{\frac{1}{n}})^m$.
		\end{definition} 
		The sets of terminal and internal vertices of $G^\frac{m}{n}$ are denoted by $V_t(G^\frac{m}{n})$ and $V_i(G^\frac{m}{n})$, respectively. It is worth noting that, $G^{\frac{1}{1}}=G$ and $G^{\frac{2}{2}}=\mathcal{T}(G)$.\\
		By virtue of Definition \ref{def1}, one can show that $\omega(G^{\frac{2}{2}})=\Delta(G)+1$ and the Total Coloring Conjecture can be reformulated as follows.
		\begin{conjecture}\label{conj1}
			{For any simple graph $G$, $\chi(G^{\frac{2}{2}})\leq \omega(G^{\frac{2}{2}})+1$.}
		\end{conjecture}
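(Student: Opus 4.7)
The plan is to attack this via the minimum counterexample method combined with discharging, since the reformulated statement is equivalent to Behzad's Total Coloring Conjecture from 1965. Let $G$ be a simple counterexample minimizing $|V(G)|+|E(G)|$, so $\chi(G^{\frac{2}{2}})\geq \Delta(G)+3$ while every proper subgraph $H\subsetneq G$ admits a proper coloring of $\mathcal{T}(H)=H^{\frac{2}{2}}$ with at most $\Delta(G)+2$ colors. The overall goal is to exhibit a \emph{reducible configuration}, namely a small subgraph of $G$ whose deletion or local modification produces a smaller graph whose total coloring can always be extended back, yielding a contradiction.

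First I would catalogue forced structural restrictions. An edge $uv$ with $\deg_G(u)+\deg_G(v)\leq \Delta(G)+1$ is classically reducible: delete it, color $G-uv$ with $\Delta(G)+2$ colors by minimality, then greedily recolor the three elements $u$, $v$, and $uv$ of $\mathcal{T}(G)$, each of which sees fewer than $\Delta(G)+2$ forbidden colors. Hence every edge of $G$ must satisfy $\deg_G(u)+\deg_G(v)\geq \Delta(G)+2$, and analogous exchange-type arguments with Kempe chains forbid small-degree pairs at distance $2$ and other sparse local patterns. These restrictions feed a discharging phase: assign each vertex $v$ an initial charge such as $\deg_G(v)-\tfrac{1}{2}\Delta(G)$ and redistribute along edges according to rules tailored to the forbidden configurations. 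If the total charge is both negative by construction and nonnegative after redistribution, we contradict the existence of $G$.

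For the regime where purely structural attacks fail, notably dense graphs with moderate $\Delta$, I would run a probabilistic argument on the residual uncolored elements. Every vertex of $G^{\frac{2}{2}}$ has degree at most $2\Delta(G)$, so a random partial coloring followed by Lov\'asz Local Lemma correction, in the spirit of the Molloy--Reed bound $\chi''(G)\leq \Delta(G)+C$, could complete the coloring once the structural phase has forced enough local sparsity.

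The main obstacle, candidly, is that this is precisely the Total Coloring Conjecture, open since 1965 despite sustained effort. The reducible configurations currently known suffice only for sparse graphs or for small fixed values of $\Delta$, while probabilistic methods lose a large additive constant that no one has succeeded in pushing down to $2$. I therefore expect the hard step to be closing the gap between the structural and probabilistic phases for dense graphs of moderate maximum degree -- precisely the regime $\Delta\in\{5,6,7,\ldots\}$ where neither technique alone delivers $\Delta+2$ -- and this likely demands a genuinely new idea, perhaps exploiting the $G^{\frac{2}{2}}$ viewpoint and the richer adjacency structure that the fractional-power formulation makes visible, rather than translating the problem back to the classical total graph.
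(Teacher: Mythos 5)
This statement is labelled as a conjecture in the paper, and the paper supplies no proof of it: it is exactly Behzad's Total Coloring Conjecture, restated via the identity $\chi''(G)=\chi(G^{\frac{2}{2}})$ and the observation that $\omega(G^{\frac{2}{2}})=\Delta(G)+1$. So there is no "paper's own proof" to compare against, and your proposal, as you yourself acknowledge in the final paragraph, does not constitute a proof either. You have correctly identified the equivalence with the classical total coloring problem and accurately surveyed the standard lines of attack (minimum counterexample with reducible edges $uv$ satisfying $\deg u+\deg v\leq\Delta+1$, discharging, and the Molloy--Reed probabilistic bound $\chi''(G)\leq\Delta+C$), but none of these closes the argument, and the gap you name -- dense graphs of moderate maximum degree where neither structural reducibility nor the Local Lemma delivers the additive constant $2$ -- is precisely why the conjecture has remained open since 1965.

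The one concrete caution worth adding: your opening reduction ("delete the edge, color $G-uv$, then greedily recolor $u$, $v$, and $uv$") is not as clean as stated, since after deleting $uv$ the maximum degree may drop, and the element $uv$ in $\mathcal{T}(G)$ sees up to $2(\Delta-1)$ colored neighbors among incident edges alone, so the greedy count needs the degree-sum hypothesis on \emph{both} endpoints and a careful accounting of the colors seen by $u$ and $v$ themselves; this is standard but not automatic. More importantly, the paper only uses this conjecture as motivation (it implies Conjecture 3 of the paper via $\chi(G^{\frac{3}{3}})\geq\Delta(G)+2$), so the correct response to this statement is to recognize it as an open problem rather than to attempt a proof -- which, to your credit, is essentially what you did.
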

		In \cite{paper13}, the chromatic number of some fractional powers of graphs was first studied and it was proved that $\chi(G^{\frac{m}{n}})=\omega(G^{\frac{m}{n}})$ where $n=m+1$ or $m=2<n$. Also it was conjectured that $\chi(G^{\frac{m}{n}})=\omega(G^{\frac{m}{n}})$ for any graph $G$ with $\Delta(G)\geq3$ when $\frac{m}{n}\in\mathbb{Q}\cap(0,1)$. This conjecture was disproved by Hartke, Liu and Petrickova \cite{hartke2013} who proved that the conjecture is not true for the cartesian product $C_3\Box K_2$ (triangular prism) when $m=3$ and $n=5$. However, they claimed that the conjecture is valid except when $G=C_3\Box K_2$. In addition they proved that the conjecture is true when $m$ is even.\\
		It can be easily seen that, $G$ and $\mathcal{I}(G)$ are isomorphic to the induced subgraphs of $G^\frac{3}{3}$ by $V_t(G^\frac{3}{3})$ and $V_i(G^\frac{3}{3})$, the sets of terminal and internal vertices of $G^\frac{3}{3}$ respectively. So $\chi_i(G)=\chi(G^{\frac{3}{3}}[V_i(G^\frac{3}{3})])$. Also, by considering the $3$-subdivision of a graph $G$, two internal vertices $(uv)_1$ and $(uv)_2$ of the edge $uv$ in $G^{\frac{3}{3}}$ are corresponding to the incidences of the edge $\{u,v\}$ in $G$. For convenience, we denote $(uv)_1$ and $(uv)_2$ with $(u,v)$ and $(v,u)$, respectively.\\
		Similar to the equality $\chi''(G)=\chi(G^{\frac{2}{2}})$, we have the following basic theorem about the relation between  $vi$-simultaneous coloring of a graph and vertex coloring of its $\frac{3}{3}$ power.
		\begin{theorem}\label{vi-simultaneous}
			For any graph $G$, $\chi_{vi}(G)=\chi(G^{\frac{3}{3}})$.
		\end{theorem}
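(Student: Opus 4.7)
The plan is to exhibit an explicit bijection $\phi\colon V(G)\cup I(G)\longrightarrow V(G^{\frac{3}{3}})$ and verify that it sends the ``adjacent-or-incident'' relation on $V(G)\cup I(G)$ bijectively onto the adjacency relation in $G^{\frac{3}{3}}$. Once this is done, a mapping $c\colon V(G)\cup I(G)\to[k]$ is a $vi$-simultaneous proper $k$-coloring of $G$ if and only if $c\circ\phi^{-1}$ is a proper $k$-coloring of $G^{\frac{3}{3}}$, and the equality $\chi_{vi}(G)=\chi(G^{\frac{3}{3}})$ follows.

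For the bijection I would set $\phi(v)=v$ for every $v\in V(G)=V_t(G^{\frac{3}{3}})$, and for every incidence $(u,e)$ with $e=\{u,w\}$ set $\phi\bigl((u,e)\bigr)=(uw)_1$, which under the paper's convention is the $i$-vertex denoted $(u,w)$. This is a bijection onto $V_i(G^{\frac{3}{3}})$ because each edge $\{u,w\}$ of $G$ contributes exactly two incidences $(u,e),(w,e)$ and exactly two $i$-vertices $(uw)_1,(uw)_2=(wu)_1$, paired up consistently.

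Next, I would check the three kinds of pairs in $V(G)\cup I(G)$, using the fact that $G^{\frac{3}{3}}$-adjacency between $x,y$ means $d_{G^{1/3}}(x,y)\in\{1,2,3\}$. First, two terminal vertices $u,v$ satisfy $d_{G^{1/3}}(u,v)=3d_G(u,v)$, so they are at distance $\leq 3$ iff they are adjacent in $G$. Second, for a vertex $v$ and an incidence $(u,\{u,w\})\mapsto(uw)_1$: the $i$-vertex $(uw)_1$ is at distance $1$ from $u$ and $2$ from $w$ in $G^{1/3}$, while all other terminal vertices are at $G^{1/3}$-distance $\geq 3d_G(v,u)+1\geq 4$; hence $v$ is $G^{\frac{3}{3}}$-adjacent to $\phi((u,\{u,w\}))$ iff $v\in\{u,w\}$, which is exactly the ``incident'' relation between a vertex and an incidence. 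Third, for two incidences $(x,\{x,y\})\mapsto(xy)_1$ and $(z,\{z,w\})\mapsto(zw)_1$, I would split into three subcases: same edge ($\{x,y\}=\{z,w\}$) giving distance $1$ or $0$; different edges sharing a common endpoint, where the $G^{1/3}$-distance equals the sum of the two position indices and lies in $\{2,3\}$; and edges sharing no endpoint, where the distance is at least $4$. A case analysis matches these exactly with conditions (i), (ii), (iii) of the definition of $\mathcal{I}(G)$.

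The verification is entirely routine but somewhat clerical; the main care is in case~3, especially in the ``shared endpoint'' subcase where one must check that distance-$3$ pairs correspond to condition~(iii) (with either $\{v,w\}=e$ or $\{v,w\}=f$) and not to an adjacency missed by the definition of $\mathcal{I}(G)$. After this case check, the claim $\chi_{vi}(G)=\chi(G^{\frac{3}{3}})$ is immediate from the definition of a proper coloring on each side.
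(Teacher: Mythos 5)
Your proposal is correct and follows essentially the same route as the paper: you identify the incidence $(u,\{u,w\})$ with the internal vertex $(uw)_1$ of $G^{\frac{1}{3}}$ and verify, by the same case analysis on distances in $G^{\frac{1}{3}}$, that adjacency/incidence in $V(G)\cup I(G)$ corresponds exactly to adjacency in $G^{\frac{3}{3}}$. Packaging this as a graph isomorphism rather than transferring colorings in each direction separately is only a cosmetic difference.
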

		Because of Theorem~\ref{vi-simultaneous}, we use the terms $\chi_{vi}(G)$ and $\chi(G^{\frac{3}{3}})$ interchangebly in the rest of the paper.  We often use the notation $\chi_{vi}(G)$ to express the theorems and the notation $\chi(G^{\frac{3}{3}})$ in the proofs.\\
		As mentioned in \cite{paper13}, one can easily show that $\omega(G^{\frac{3}{3}})=\Delta(G)+2$, when $\Delta(G)\geq 2$ and $\omega(G^{\frac{3}{3}})=4$, when $\Delta(G)=1$. Therefore, $\Delta+2$ is a lower bound for $\chi(G^{\frac{3}{3}})$ and $\chi_{vi}(G)$, when $\Delta(G)\geq 2$. In \cite{paper13}, the chromatic number of fractional power of cycles and paths are considered, which can be used to show that the graphs with maximum degree two are $vi$-simultaneous 5-colorable (see Section \ref{sec4}). In \cite{iradmusa2020,3power3subdivision} it is shown that $\chi(G^{\frac{3}{3}})\leq7$ for any graph $G$ with maximum degree $3$. Moreover, in \cite{mahsa} it is proved that $\chi(G^{\frac{3}{3}})\leq 9$ for any graph $G$ with maximum degree $4$. Also in \cite{iradmusa2020} it is proved that $\chi(G^{\frac{3}{3}})\leq\chi(G)+\chi_i(G)$ when $\Delta(G)\leq2$ and $\chi(G^{\frac{3}{3}})\leq \chi(G)+\chi_i(G)-1$ when $\Delta(G)\geq 3$. In addition, in \cite{Bruldy}, it is shown that $\chi_i(G)\leq2\Delta(G)$ for any graph $G$. Hence, if $G$ is a graph with $\Delta(G)\geq2$, then $\chi(G^{\frac{3}{3}})=\chi_{vi}(G)\leq 3\Delta(G)$.\\
		According to the results mentioned in the previous paragraph, the following conjecture is true for graphs with maximum degree at most $4$.
		\begin{conjecture}{\em{\cite{mahsa}}}\label{cmahsa}
			Let $G$ be a graph with $\Delta(G)\geq 2$. Then $\chi_{vi}(G)\leq 2\Delta(G)+1$.
		\end{conjecture}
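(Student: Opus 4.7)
The plan is to attack the conjecture by strong induction on $|V(G)|$. Since the case $\Delta(G)\leq 4$ is already known, one focuses on $\Delta:=\Delta(G)\geq 5$, assuming the conjecture for every graph with fewer vertices and maximum degree at most $\Delta$. Select a vertex $v$ of degree $d:=d_G(v)\leq\Delta$, chosen via structural considerations (a minimum-degree vertex, or a vertex pinpointed by a Vizing-type adjacency lemma on a minimal counterexample). Applying the induction hypothesis to $G':=G-v$ and using Theorem~\ref{vi-simultaneous}, obtain a proper $(2\Delta+1)$-coloring $c'$ of $(G')^{\frac{3}{3}}$. The task is to extend $c'$ to $G^{\frac{3}{3}}$ by colouring the $2d+1$ new elements $v,\,(v,w_1),\ldots,(v,w_d),\,(w_1,v),\ldots,(w_d,v)$, where $N_G(v)=\{w_1,\ldots,w_d\}$.

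First I would record the structure of the subgraph of $G^{\frac{3}{3}}$ induced on the new elements. A routine distance computation in $G^{\frac{1}{3}}$ shows that $\{v,(v,w_1),\ldots,(v,w_d)\}$ is a clique of size $d+1$, the set $\{(w_1,v),\ldots,(w_d,v)\}$ is independent, and each $(w_i,v)$ is adjacent to every vertex of the clique; moreover, adding any one $(w_i,v)$ to the clique produces a $K_{d+2}$, realizing $\omega(G^{\frac{3}{3}})=\Delta+2$. A direct count of neighbours already coloured by $c'$ yields palettes of available colors of sizes $|L(v)|\geq 2\Delta+1-d$, $|L((v,w_i))|\geq 2\Delta+1-d(w_i)\geq \Delta+1$, and $|L((w_i,v))|\geq 2\Delta+1-(2d(w_i)-1)\geq 2$. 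The extension problem thereby becomes a list-colouring problem on the structured subgraph just described.

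Colouring $v$ and the $(v,w_i)$'s greedily inside their clique is routine since each such element has a palette strictly larger than its forward degree in the new subgraph. The delicate step is assigning a color to each $(w_i,v)$: it must avoid its $2d(w_i)-1$ already-forbidden colors \emph{and} the $d+1$ fresh colors of the clique $\{v,(v,w_1),\ldots,(v,w_d)\}$, a total of up to $2\Delta+d$ forbidden colors. The proof must therefore choose the clique colours so as to leave, for every index $i$, at least one colour of $L((w_i,v))$ unused by the clique. I would phrase this as a Hall-type matching problem: define a bipartite graph between the clique positions and the colours appearing in $\bigcup_i L((w_i,v))$, and show that a system of distinct representatives exists which, together with a legal extension to $v$ and to the $(v,w_i)$'s, respects all constraints.

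The hard part will be guaranteeing the Hall condition in full generality. The bound $2\Delta+1$ leaves essentially no slack in the above count, so a proof seems to require genuinely new input beyond the reduction: for instance, (a) a refined adjacency lemma on a minimal counterexample showing that high-degree neighbours of $v$ are scarce and hence the lists $L((w_i,v))$ are larger than the worst-case $2$; (b) a Vizing-fan or Kempe-chain recoloring of $c'$ in a neighbourhood of $v$, creating the required colour freedom before the extension; or (c) in restricted families (exactly those treated in the remainder of the paper, such as $k$-degenerate graphs, cycles, forests, complete graphs, and regular bipartite graphs), an ad hoc construction that exploits the extra structure and sidesteps the Hall condition entirely. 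I expect strategy (c) to succeed class-by-class, while strategies (a)--(b) are the likely route to the full conjecture but appear to require a substantial further idea in the general $\Delta\geq 5$ regime.
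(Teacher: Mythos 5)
The statement you are asked about is a \emph{conjecture}: the paper does not prove it, and it remains open. The paper only verifies it for particular classes (maximum degree at most $4$ via cited earlier work, and then $k$-degenerate graphs, forests, cycles, complete graphs and bipartite graphs in Sections \ref{sec3} and \ref{sec4}). Your proposal likewise does not prove it, and to your credit you say so explicitly in your final paragraph; the text should therefore not be read as a proof of the conjecture but as a reduction plus an identification of the obstruction.

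That said, the reduction you set up is exactly the engine the paper runs in its partial results, and your diagnosis of where it breaks is correct. In Theorem \ref{kdegenerated} the paper deletes a vertex $u$ of degree $r\leq k$, extends a coloring of $G-u$ by first choosing an SDR for the availability lists of $I_1(u)$ via Hall's theorem, then colors $I_2(u)$ and finally $u$; that argument survives only because $r\leq k$ supplies slack at every stage, which is why the bound obtained there is $\Delta+2k$ rather than $2\Delta+1$. With the target $2\Delta+1$ and a deleted vertex of degree $d$ close to $\Delta$ (unavoidable in, say, a $\Delta$-regular graph), your own count gives $|L((w_i,v))|\geq 2$ while the clique $\{v\}\cup I_1(v)$ consumes $d+1$ fresh colors out of $2\Delta+1$; a $2$-element list need not meet the $2\Delta-d$ remaining colors, and no SDR formulation repairs this, because the obstruction is a genuine shortage of colors at the vertices $(w_i,v)$ rather than a matching obstruction. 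So the concrete gap is the one you name: without a recoloring device (Kempe chains on $c'$ near $v$, or an adjacency lemma forcing the lists $L((w_i,v))$ to be large in a minimal counterexample), the greedy/Hall extension cannot reach $2\Delta+1$ in general. Your ``strategy (c)'' is precisely what the paper actually carries out, class by class.
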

		We know that $\chi(G^{\frac{3}{3}})\geq \omega(G)=\Delta(G)+2$ when $\Delta(G)\geq 2$. In addition, Total Coloring Conjecture states that $\chi(G^{\frac{2}{2}})\leq \Delta(G)+2$. Therefore if Total Coloring Conjecture is correct, then the following conjecture is also true.
		\begin{conjecture}{\em{\cite{mahsa}}}\label{tcmahsa}
			Let $G$ be a graph with $\Delta(G)\geq 2$. Then $\chi(G^{\frac{2}{2}})\leq\chi(G^{\frac{3}{3}})$.
		\end{conjecture}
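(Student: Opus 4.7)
The plan is to deduce Conjecture~\ref{tcmahsa} conditionally from the Total Coloring Conjecture (TCC) together with the clique-based lower bound already assembled in the introduction. First, I would invoke the two facts explicitly on display in the text: for $\Delta(G)\geq 2$ one has $\omega(G^{\frac{3}{3}})=\Delta(G)+2$, hence the trivial lower bound $\chi(G^{\frac{3}{3}})\geq \Delta(G)+2$; and TCC predicts $\chi(G^{\frac{2}{2}})\leq \Delta(G)+2$. Chaining these two inequalities gives $\chi(G^{\frac{2}{2}})\leq \Delta(G)+2\leq \chi(G^{\frac{3}{3}})$, which is precisely the statement of the conjecture. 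Consequently, for every class where TCC is known to hold (for example $\Delta(G)\leq 5$, or planar graphs with sufficiently large maximum degree), Conjecture~\ref{tcmahsa} is already unconditionally true, and together with the $\Delta(G)\leq 4$ cases of Conjecture~\ref{cmahsa} cited in the introduction one obtains the conjecture for a wide range of small-degree regimes.

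A more ambitious plan would be to bypass TCC and build a proper total coloring of $G$ directly from a proper coloring of $G^{\frac{3}{3}}$. Given $c:V(G)\cup I(G)\to[k]$, one would keep $c$ on $V(G)$ and, for each edge $e=uv$, select a color for $e$ from the two candidates $\{c(u,v),c(v,u)\}$, both of which are automatically distinct from $c(u)$ and $c(v)$ because the incidences are adjacent to their terminal vertices in $G^{\frac{3}{3}}$. At a fixed vertex $u$, the first-incidence colors $\{c(u,v):uv\in E(G)\}$ are pairwise distinct (those incidences, together with $u$, form a clique in $G^{\frac{3}{3}}$), but the second-incidence colors $\{c(v,u):uv\in E(G)\}$ need not be, since in $G^{\frac{3}{3}}$ two such vertices $(v_1,u)$ and $(v_2,u)$ sit at distance $4$ and are therefore non-adjacent.

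The main obstacle in this direct attack is to choose one candidate per edge consistently at both endpoints. Rephrased as an orientation problem: orient every edge $uv$ as $u\to v$ and assign color $c(u,v)$; for the resulting total coloring to be proper, we need that at every vertex $u$ the incoming edges carry pairwise distinct ``second-at-$u$'' colors. An easy sufficient condition is an orientation in which every in-degree is at most $1$, which is only possible when $G$ is a pseudoforest, so for arbitrary $G$ one would have to either perturb the $G^{\frac{3}{3}}$ coloring locally or invoke a non-trivial matching- or Hall-type argument to ensure the orientation exists. Short of such a new combinatorial idea, the cleanest available route remains the conditional one via TCC, which also explains why the statement is recorded here as a conjecture rather than as a theorem.
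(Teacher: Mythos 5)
Your first paragraph reproduces exactly the paper's own reasoning: the statement is recorded as a conjecture precisely because the only available derivation is the conditional one, chaining $\chi(G^{\frac{2}{2}})\leq\Delta(G)+2$ (from the Total Coloring Conjecture) with $\chi(G^{\frac{3}{3}})\geq\omega(G^{\frac{3}{3}})=\Delta(G)+2$. Your correct observation that this yields unconditional cases wherever TCC is known, and your honest account of why the direct incidence-to-edge selection argument breaks down at vertices of in-degree greater than one, go beyond what the paper states but do not change the conclusion.
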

		Similar to the graphs $\mathcal{L}(G)$, $\mathcal{T}(G)$ and $\mathcal{I}(G)$, for any graph $G$, we can define a corresponding graph, denoted by $\mathcal{T}_{vi,1}(G)$, such that $\chi_{vi,1}(G)=\chi(\mathcal{T}_{vi,1}(G))$.
		\begin{definition}\label{Tvi1}
			Let $G$ be a nonempty graph. The graph $\mathcal{T}_{vi,1}(G)$, is a graph with vertex set $V(G)\times [2]$ and two vertices $(v,i)$ and $(u,j)$ are adjacent in $\mathcal{T}_{vi,1}(G)$ if and only if one of the following conditions hold:
			\begin{itemize}
				\item $i=j=1$ and $d_G(v,u)=1$,
				\item $i=j=2$ and $1\leq d_G(v,u)\leq 2$,
				\item $i\neq j$ and $0\leq d_G(v,u)\leq 1$,
			\end{itemize}
		\end{definition}
		\begin{example}\label{Ex:Tvi1C6}
			{\rm As an example, $\mathcal{T}_{vi,1}(C_6)$ shown in Figure \ref{Tvi1C6}. Unlabeled vertices belong to $V(C_6)\times\{2\}$.
		}\end{example}
		\begin{figure}[h]
			\begin{center}
				\resizebox{7.7cm}{5cm}{%
					\begin{tikzpicture}[scale=0.5]
						\tikzset{vertex/.style = {shape=circle,draw, line width=1pt, opacity=1.0, inner sep=2pt}}
						\tikzset{edge/.style = {-,> = latex', line width=1pt,opacity=1.0}}
						\node [vertex] (0) at (0, 2.5) {};
						\node [vertex] (1) at (3, 2.5) {};
						\node [vertex] (2) at (5, 0) {};
						\node [vertex] (3) at (-2, 0) {};
						\node [vertex] (4) at (3, -2.5) {};
						\node [vertex] (5) at (0, -2.5) {};
						\node [vertex] (6) at (4, 4) {};
						\node  at (5.5,4) {$(v_2,1)$};
						\node [vertex] (7) at (7, 0) {};
						\node  at (8.5,0) {$(v_1,1)$};
						\node [vertex] (8) at (4, -4) {};
						\node  at (5.5,-4) {$(v_6,1)$};
						\node [vertex] (9) at (-1, -4) {};
						\node  at (-2.5,-4) {$(v_5,1)$};
						\node [vertex] (10) at (-4, 0) {};
						\node  at (-5.5,0) {$(v_4,1)$};
						\node [vertex] (11) at (-1, 4) {};
						\node  at (-2.5,4) {$(v_3,1)$};
						\draw [edge] (1) to (2);
						\draw [edge] (1) to (0);
						\draw [edge] (0) to (3);
						\draw [edge] (2) to (4);
						\draw [edge] (4) to (5);
						\draw [edge] (5) to (3);
						\draw [edge] (6) to (11);
						\draw [edge] (11) to (10);
						\draw [edge] (10) to (9);
						\draw [edge] (9) to (8);
						\draw [edge] (8) to (7);
						\draw [edge] (7) to (6);
						\draw [edge] (1) to (6);
						\draw [edge] (2) to (7);
						\draw [edge] (4) to (8);
						\draw [edge] (5) to (9);
						\draw [edge] (3) to (10);
						\draw [edge] (0) to (11);
						\draw [edge] (0) to (6);
						\draw [edge] (11) to (1);
						\draw [edge] (1) to (7);
						\draw [edge] (2) to (6);
						\draw [edge] (2) to (8);
						\draw [edge] (4) to (7);
						\draw [edge] (4) to (9);
						\draw [edge] (5) to (8);
						\draw [edge] (5) to (10);
						\draw [edge] (3) to (9);
						\draw [edge] (10) to (0);
						\draw [edge] (3) to (11);
						\draw [edge] (1) to (4);
						\draw [edge] (2) to (5);
						\draw [edge] (4) to (3);
						\draw [edge] (5) to (0);
						\draw [edge] (3) to (1);
						\draw [edge] (0) to (2);
				\end{tikzpicture}}
				\caption{$\mathcal{T}_{vi,1}(C_6)$}
				\label{Tvi1C6}
			\end{center}
		\end{figure}
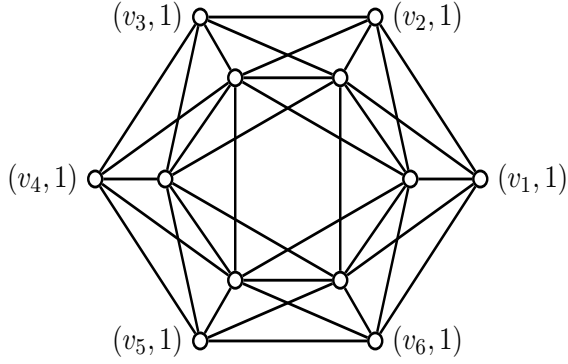
		\begin{theorem}\label{start2}
			For any nonempty graph $G$, $\chi_{vi,1}(G)=\chi(\mathcal{T}_{vi,1}(G))$.
		\end{theorem}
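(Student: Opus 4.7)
The plan is, for each $k$, to exhibit a correspondence between $vi$-simultaneous $(k,1)$-colorings of $G$ and proper $k$-colorings of $\mathcal{T}_{vi,1}(G)$, from which $\chi_{vi,1}(G)=\chi(\mathcal{T}_{vi,1}(G))$ follows. The encoding is natural once one observes that a $(k,1)$-coloring $c$ associates to each vertex $v$ two colors: its own color $c(v)$, and the single color $\phi(v)$ appearing on $I_2(v)$ (choose any color if $v$ is isolated). I would send such a $c$ to the coloring $\tilde c$ of $\mathcal{T}_{vi,1}(G)$ defined by $\tilde c(v,1)=c(v)$ and $\tilde c(v,2)=\phi(v)$. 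Conversely, from a proper coloring $\tilde c$ of $\mathcal{T}_{vi,1}(G)$ I would recover $c$ by setting $c(v)=\tilde c(v,1)$ for each vertex and, for each incidence $i=(u,\{u,v\})\in I_1(u)\cap I_2(v)$, $c(i)=\tilde c(v,2)$; this is well defined since the other endpoint of the edge is unique.

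For the forward direction I would verify the three types of edges of $\mathcal{T}_{vi,1}(G)$. Edges $(v,1)$--$(u,1)$ encode the proper vertex coloring condition on $G$. Edges $(v,1)$--$(u,2)$ with $0\le d_G(v,u)\le 1$ encode $c(v)\neq\phi(v)$ (the vertex-incidence conflict between $v$ and any element of $I_2(v)$) and $c(v)\neq\phi(u)$ for $uv\in E(G)$ (the conflict between $v$ and the incidence $(v,\{u,v\})\in I_2(u)$). For edges $(u,2)$--$(v,2)$ with $1\le d_G(u,v)\le 2$: if $uv\in E(G)$ then $(v,\{u,v\})\in I_2(u)$ shares the vertex $v$ with every element of $I_2(v)$ in $\mathcal{I}(G)$, and if $u,v$ share a common neighbor $w$ then $(w,\{u,w\})\in I_2(u)$ and $(w,\{v,w\})\in I_2(v)$ are adjacent in $\mathcal{I}(G)$, forcing $\phi(u)\neq\phi(v)$ in either sub-case.

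For the reverse direction I would verify that the defined $c$ is a proper $vi$-simultaneous $(k,1)$-coloring. Monochromaticity of each $I_2(v)$ holds by construction. The vertex-vertex and vertex-incidence constraints transfer directly from the corresponding edges of $\mathcal{T}_{vi,1}(G)$. The main obstacle is the systematic check of the three adjacency conditions (i), (ii), (iii) from the definition of $\mathcal{I}(G)$: one must show that in each case the two conflicting incidences receive colors of the form $\tilde c(x,2)$ and $\tilde c(y,2)$ for a pair $x\neq y$ with $d_G(x,y)\le 2$, so that properness of $\tilde c$ forces distinct colors. Condition (iii) in particular requires care, since the sub-case where the two incidences lie on a common edge degenerates into condition (ii) and must be handled separately.
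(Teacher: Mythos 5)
Your proposal is correct and follows essentially the same route as the paper: the identical correspondence $\tilde c(v,1)=c(v)$, $\tilde c(v,2)=$ the unique color on $I_2(v)$ (and its inverse assigning $\tilde c(v,2)$ to every incidence in $I_2(v)$), together with a case check against the three adjacency conditions of $\mathcal{T}_{vi,1}(G)$ and of $\mathcal{I}(G)$. One small point: for an isolated vertex $v$ you should choose $\phi(v)\neq c(v)$ rather than an arbitrary color, since $(v,1)$ and $(v,2)$ are adjacent in $\mathcal{T}_{vi,1}(G)$; otherwise the argument is complete.
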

		An incidence coloring of a graph can be viewed as a proper arc coloring of a corresponding digraph. For a graph $G$, digraph $\overrightarrow{G}$ is a digraph obtained from $G$ by replacing each edge of $E(G)$ by two opposite arcs. Any incidence $(v,e)$ of $I(G)$, with $e=\{v,w\}$, can then be associated with the arc $(v,w)$ in $A(\overrightarrow{G})$. Therefore, an incidence coloring of $G$ can be viewed as a proper arc coloring of $\overrightarrow{G}$ satisfying $(i)$ any two arcs having the same tail vertex are assigned distinct colors and $(ii)$ any two consecutive arcs are assigned distinct colors.\\
		Similar to incidence coloring, there is another equivalent coloring for proper coloring of $\frac{3}{3}$-power of a graph or equivalently $vi$-simultaneous proper coloring. 
		\begin{definition}\label{underlying}
			Let $G$ be a graph, $S=S_t\cup S_i$ be a subset of $V(G^{\frac{3}{3}})$ such that $S_t\subseteq V_t(G^{\frac{3}{3}})$ and $S_i\subseteq V_i(G^{\frac{3}{3}})$ and $H$ be the subgraph of $G^{\frac{3}{3}}$ induced by $S$. Also let $A(S_i)=\{(u,v)\ |\ (uv)_1\in S_i\}$ and $V(S_i)=\{u\in V(G)\ |\ I(u)\cap S_i\neq\varnothing\}$. The underlying digraph of $H$, denoted by $D(H)$, is a digraph with vertex set $S_t\cup V(S_i)$ and arc set $A(S_i)$. Specially, $D(G^{\frac{3}{3}})=\overrightarrow{G}$.
		\end{definition}
		Now any proper coloring of $G^{\frac{3}{3}}$ (or, equivalently, any $vi$-simultaneous coloring of $G$) can be viewed as a coloring of vertices and arcs of $D(G^{\frac{3}{3}})$ satisfying $(i)$ any two adjacent vertices are assigned distinct colors, $(ii)$ any arc and its head and tail are assigned distinct colors, $(iii)$ any two arcs having the same tail vertex (of the form $(u,v)$ and $(u,w)$) are assigned distinct colors and $(iv)$ any two consecutive arcs (of the form $(u,v)$ and $(v,w)$) are assigned distinct colors.\\
		A star is a tree with diameter at most two. A star forest is a forest, whose connected components are stars. The star arboricity $st(G)$ of a graph $G$ is the minimum number of star forests in $G$ whose union covers all edges of $G$. In \cite{planarinc} it was proved that  $\chi_i(G)\leq \chi'(G)+st(G)$. Similar to this result, we can give an upper bound for $\chi_{vi}(G)$ in terms of total chromatic number and star arboricity.
		\begin{theorem}\label{start1}
			For any graph $G$, we have $\chi_{vi}(G)\leq \chi(G^{\frac{2}{2}})+st(G)$.
		\end{theorem}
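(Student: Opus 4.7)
The plan is to combine a proper total $k$-coloring of $G$ with a decomposition of $E(G)$ into $s=st(G)$ star forests, where $k=\chi(G^{\frac{2}{2}})$. Fix such a total coloring $\phi:V(G)\cup E(G)\to[k]$ and a decomposition $E(G)=F_1\cup\cdots\cup F_s$ with each $F_j$ a star forest. For every edge $e$, let $j(e)$ be the index with $e\in F_{j(e)}$, and designate one endpoint of $e$ as the center $c_e$ and the other as the leaf $\ell_e$ of the star of $F_{j(e)}$ containing $e$ (breaking the tie arbitrarily for components that are single edges).

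Introduce $s$ fresh colors $k+1,\ldots,k+s$ and define $\psi:V(G)\cup I(G)\to[k+s]$ by $\psi(v)=\phi(v)$ for $v\in V(G)$, $\psi(c_e,e)=\phi(e)$ for the center-side incidence of each edge $e$, and $\psi(\ell_e,e)=k+j(e)$ for the leaf-side incidence. By Theorem~\ref{vi-simultaneous}, it suffices to show that $\psi$ is a proper $vi$-simultaneous coloring of $G$, as this will immediately yield $\chi_{vi}(G)\le k+s$.

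The verification proceeds by going through each adjacency type in $G^{\frac{3}{3}}$. Two adjacent vertices inherit distinct colors from $\phi$. A vertex $u$ differs from each incident incidence: a center-side incidence at $u$ has color $\phi(e)$ with $e\ni u$, so $\phi(u)\ne\phi(e)$ by totality of $\phi$, while any leaf-side incidence uses a color above $k$ and $\phi(u)\le k$. The clique $I_1(u)$ is properly colored because the center-side and leaf-side palettes are disjoint; within the center-side part the values $\phi(e)$ are distinct by the edge-coloring component of $\phi$, and within the leaf-side part the indices $j(e)$ that appear are distinct because $u$ is a leaf of at most one star per forest. The two incidences of a single edge always differ because they fall in opposite palettes.

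The remaining adjacencies are pairs $(v_1,e_1),(v_2,e_2)$ with $e_1\ne e_2$ and $v_2\in e_1\cap e_2$. Three of the four center/leaf subcases are routine: either the two colors lie in different palettes, or both are $\phi$-values of distinct edges incident to $v_2$ and so differ by the edge-coloring part of $\phi$. The main obstacle is the remaining subcase, where $v_1=\ell_{e_1}$ (forcing $v_2=c_{e_1}$) and $v_2=\ell_{e_2}$; the two incidences then receive leaf-side colors $k+j(e_1)$ and $k+j(e_2)$. The defining property of a star forest rescues us here: $v_2$ cannot simultaneously be a center of one star and a leaf of another within the same $F_j$, so $j(e_1)\ne j(e_2)$ and the colors differ. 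This exhausts the cases and establishes $\chi_{vi}(G)\le\chi(G^{\frac{2}{2}})+st(G)$.
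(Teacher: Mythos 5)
Your proof is correct and follows essentially the same construction as the paper: decompose $E(G)$ into $st(G)$ star forests, give the leaf-side incidence of each edge the index of its star forest from a fresh palette, and color the vertices and center-side incidences by a total coloring of $G$. The only difference is that you verify all adjacency types explicitly, whereas the paper compresses this into the observation that the uncolored elements induce a subgraph of $G^{\frac{2}{2}}$; your case analysis (in particular the leaf/leaf case resolved by the star-forest structure) is sound.
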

		The aim of this paper is to find exact value or upper bound for the $vi$-simultaneous chromatic number of some classes of graphs by coloring the vertices of $G^{\frac{3}{3}}$ and checking the truthness of the conjecture \ref{cmahsa} for some classes of graphs. We show that the Conjecture~\ref{cmahsa} is true for some graphs such as trees, complete graphs and bipartite graphs. Also we study the relationship between $vi$-simultaneous chromatic number and the other parameters of graphs.
		\subsection{Structure of the paper}
		After this introductory section where we established the background, purpose and some basic definitions and theorems of the paper, we divide the paper into four sections. In Section \ref{sec2}, we prove Theorems \ref{vi-simultaneous}, \ref{start2} and \ref{start1} and some basic lemmas and theorems. In Section \ref{sec3}, we give an upper bound for $vi$-simultaneous chromatic number of a $k$-degenerated graph in terms of $k$ and the maximum degree of graph. In Section \ref{sec4} we provide exact value for chromatic number of $\frac{3}{3}$-powers of cycles, complete graphs and complete bipartite graphs and also give an upper bound for chromatic number of $\frac{3}{3}$-powers of bipartite graphs and conclude that the Conjecture~\ref{cmahsa} is true for these classes of graphs. 
		\section{Basic theorems and lemmas}\label{sec2}
		At first, we prove Theorems \ref{vi-simultaneous}, \ref{start2} and \ref{start1}.\\
		\textbf{Proof of Thorem \ref{vi-simultaneous}} At first, suppose that $\chi(G^{\frac{3}{3}})=k$ and $c:V(G^{\frac{3}{3}})\longrightarrow[k]$ is a proper coloring of $G^{\frac{3}{3}}$. We show that the following $vi$-simultaneous $k$-coloring of $G$ is proper.
		\[c'(x)=\left\{\begin{array}{cc} c(x) & x\in V(G)=V_t(G^{\frac{3}{3}}),\\ c((uv)_1) & x=(u,v)\in I(G). \end{array}\right.\]
		Since $G$ in an induced subgraph of $G^{\frac{3}{3}}$ by the terminal vertices, $c$ is a proper coloring of $G$. So $c'$ assigns different colors to the adjacent vertices of $G$. Now suppose that $(u,v)$ and $(r,s)$ are adjacent vertices in $\mathcal{I}(G)$. There are three cases:\\
		(i) $(r,s)=(v,u)$. Since $(vu)_1$ and $(uv)_1$ are adjacent in $G^{\frac{3}{3}}$, $c'((u,v))=c((uv)_1)\neq c((vu)_1)=c'((r,s))$.\\
		(ii) $r=u$. Since $d_{G^{\frac{1}{3}}}((uv)_1, (us)_1)=2$, $(uv)_1$ and $(us)_1$ are adjacent in $G^{\frac{3}{3}}$. So in this case, $c'((u,v))=c((uv)_1)\neq c((us)_1)=c'((u,s))$.\\
		(iii) $r=v$. Since $d_{G^{\frac{1}{3}}}((uv)_1, (vs)_1)=3$, $(uv)_1$ and $(vs)_1$ are adjacent in $G^{\frac{3}{3}}$. So in this case, $c'((u,v))=c((uv)_1)\neq c((vs)_1)=c'((v,s))$.\\
		Finally suppose that $u\in V(G)$ and $(r,s)\in I(G)$ are incident. So $u=r$ or $u=s$. In the first case, we have $d_{G^{\frac{1}{3}}}(u, (rs)_1)=1$ and in the second case we have $d_{G^{\frac{1}{3}}}(u, (rs)_1)=2$ and $u$ and $(rs)_1$ are adjacent in $G^{\frac{3}{3}}$. So $c'(u)=c(u)\neq c((rs)_1)=c'((r,s))$.\\
		Similarly we can show that each proper $vi$-simultaneous $k$-coloring of $G$ give us a proper $k$-coloring of $G^{\frac{3}{3}}$. Therefore $\chi_{vi}(G)=\chi(G^{\frac{3}{3}})$.
		\hfill $\blacksquare$\\\\
		\textbf{Proof of Thorem \ref{start2}} Firstly, suppose that $\chi_{vi,1}(G)=k$ and $c:V(G)\cup I(G)\longrightarrow [k]$ is a $vi$-simultaneous $(k,1)$-coloring of $G$. We show that the following $k$-coloring of $\mathcal{T}_{vi,1}(G)$ is proper.
		\[c'(x)=\left\{\begin{array}{cc} c(u) & x=(u,1),\\ s & x=(u,2), s\in c(I_2(u)). \end{array}\right.\]
		Since $c$ is a $vi$-simultaneous $(k,1)$-coloring, $|c(I_2(u))|=1$ for any vertex $u\in V(G)$ and so $c'$ is well-defined. Now suppose that $(v,i)$ and $(u,j)$ are adjacent in $\mathcal{T}_{vi,1}(G)$.
		\begin{itemize}
			\item If $i=j=1$, then $c'((v,i))=c(v)\neq c(u)=c'((u,j))$.
			\item If $i=j=2$ and $d_G(v,u)=1$, then $c'((v,i))=c(u,v)\neq c((v,u))=c'((u,j))$.
			\item If $i=j=2$ and $d_G(v,u)=2$, then $c'((v,i))=c(z,v)\neq c((z,u))=c'((u,j))$ where $z\in N_G(v)\cap N_G(u)$.
			\item If $i=1$, $j=2$ and $v=u$, then $c'((v,i))=c(v)\neq c((z,v))=c'((u,j))$ where $z\in N_G(v)$.
			\item If $i=1$, $j=2$ and $d_G(v,u)=1$, then $c'((v,i))=c(v)\neq c((v,u))=c'((u,j))$.
		\end{itemize}
		So $c'$ assigns different colors to the adjacent vertices of $\mathcal{T}_{vi,1}(G)$.\\
		Now suppose that $\chi(\mathcal{T}_{vi,1}(G))=k$ and $c':V(\mathcal{T}_{vi,1}(G))\longrightarrow [k]$ is a proper $k$-coloring of $\mathcal{T}_{vi,1}(G)$. Easily one can show that the following $k$-coloring is a $vi$-simultaneous $(k,1)$-coloring of $G$.
		\[c(x)=\left\{\begin{array}{cc} c'((x,1)) & x\in V(G),\\ c'((v,2)) & x=(u,v)\in I(G). \end{array}\right.\]
		Thus $\chi_{vi,1}(G)=\chi(\mathcal{T}_{vi,1}(G))$.
		\hfill $\blacksquare$\\\\
		\noindent\textbf{Proof of Thorem \ref{start1}} Let $G$ be an undirected graph with star arboricity $st(G)$ and $s \hspace{1mm}:\hspace{1mm} E(G) \longrightarrow [st(G)]$ be a mapping such that $s^{-1}(i)$ is a forest of stars for any $i$, $1\leq i \leq st(G)$. Also, suppose that $c$ be a total coloring of $G^{\frac{2}{2}}$ with colors $\{st(G)+1,\ldots,st(G)+\chi''(G)\}$. Now, to color $t$-vertices and $i$-vertices of the graph $G$, define the mapping $c'$ by $c'((u,v))=s(uv)$ if $v$ is the center of a star in some forest $s^{-1}(i)$. If some star is reduced to one edge, we arbitrarily choose one of its end vertices as the center. Note that, for any edge $uv$, one of the $t$-vertices $u$ or $v$ is the center of a some star forest. It is enough to color the other $t$-vertices and $i$-vertices of $G$.\\
		Consider the graph $G$ on uncolord $t$-vertices and uncolord $i$-vertices. It can be easily seen that the resulting graph, $G'$, is isomorphic to $G^{\frac{2}{2}}$. Now, assign colors $c(u)$ and $c((u,v))$ to a $t$-vertex $u$ and a $i$-vertex $(u,v)$ in $G'$. Therefore,  we have $\chi(G^{\frac{3}{3}})\leq\chi(G^{\frac{2}{2}})+st(G)$.
		\hfill $\blacksquare$\\\\
		For any star forest $F$, we have $st(F)=1$, $\chi(F^{\frac{2}{2}})=\Delta(F)+1$ and $\chi(F^{\frac{3}{3}})=\Delta(F)+2$. Therefore, the upper bound of Theorem \ref{start1} is tight.\\
		The following lemmas will be used in the proofs of some theorems in the next sections. The set $\{c(a)\ |\ a\in A\}$ is denoted by $c(A)$ where $c:D\rightarrow R$ is a function and $A\subseteq D$.
		\begin{lemma}\label{firstlem}
			Let $G$ be a graph with maximum degree $\Delta$ and $c$ is a proper $(\Delta+2)$-coloring of $G^{\frac{3}{3}}$ with colors from $[\Delta+2]$. Then $|c(I_2(v))\leq\Delta-d_G(v)+1$ for any $t$-vertex $v$. Specially $|c(I_2(v))|=1$ for any $\Delta$-vertex $v$ of $G$.
		\end{lemma}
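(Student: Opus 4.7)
The plan is to identify a large clique in $G^{\frac{3}{3}}$ that surrounds $v$, and then show that every vertex of $I_2(v)$ is adjacent (in $G^{\frac{3}{3}}$) to every member of this clique, so its color must lie in the complement of a set of forbidden colors.

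Write $d = d_G(v)$ and, for each $w \in N_G(v)$, let $\alpha_w = (v,w) = (vw)_1 \in I_1(v)$ and $\beta_w = (w,v) = (vw)_2 \in I_2(v)$. In the subdivided graph $G^{\frac{1}{3}}$, every $\alpha_w$ is at distance $1$ from $v$ and any two distinct $\alpha_{w_1},\alpha_{w_2}$ are at distance $2$ from each other (via $v$). Consequently, in $G^{\frac{3}{3}}$ the set $\{v\}\cup I_1(v)$ is a clique of size $d+1$, and under $c$ it receives $d+1$ pairwise distinct colors.

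Next I would check that for every $w\in N_G(v)$ the vertex $\beta_w\in I_2(v)$ is adjacent in $G^{\frac{3}{3}}$ to $v$ and to every $\alpha_{w'}\in I_1(v)$. Indeed, $d_{G^{\frac{1}{3}}}(\beta_w,v)=2$ and, for $w'\in N_G(v)$, the path $\beta_w=(vw)_2-(vw)_1-v-(vw')_1=\alpha_{w'}$ has length $3$, so $\beta_w$ is adjacent in $G^{\frac{3}{3}}$ to each $\alpha_{w'}$ (including $\alpha_w$). Therefore $c(\beta_w)$ cannot equal any color in $c(\{v\}\cup I_1(v))$, so
\[
c(I_2(v))\ \subseteq\ [\Delta+2]\setminus c(\{v\}\cup I_1(v)),
\]
and the right-hand set has cardinality $(\Delta+2)-(d+1)=\Delta-d+1$. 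This gives $|c(I_2(v))|\leq \Delta-d+1$.

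For the "specially" part, set $d=\Delta$ to get $|c(I_2(v))|\leq 1$. Since $\Delta\geq 1$ forces $I_2(v)\neq\varnothing$, the reverse inequality $|c(I_2(v))|\geq 1$ is automatic and equality holds. There is no real obstacle here: the whole argument is a counting/adjacency verification, and the only thing that needs care is confirming the distance computations in $G^{\frac{1}{3}}$ that make $\{v\}\cup I_1(v)\cup\{\beta_w\}$ a clique of size $d+2$ in $G^{\frac{3}{3}}$ for each individual $\beta_w$ (note that two distinct $\beta_{w_1},\beta_{w_2}$ are at distance $4$ in $G^{\frac{1}{3}}$, so $I_2(v)$ itself is not part of the clique, which is precisely why repeated colors are allowed on $I_2(v)$).
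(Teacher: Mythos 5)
Your proof is correct and follows essentially the same route as the paper's: both identify $I_1[v]$ as a clique of size $d_G(v)+1$ in $G^{\frac{3}{3}}$, observe that every vertex of $I_2(v)$ is adjacent to all of $I_1[v]$, and count the remaining colors. You simply spell out the distance computations in $G^{\frac{1}{3}}$ (and the non-emptiness of $I_2(v)$ for the equality case) that the paper leaves implicit.
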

		\begin{proof}{
				Let $v$ be a $t$-vertex of $G$. Since all vertices in $I_1[v]$ are pairwise adjacent in $G^{\frac{3}{3}}$, there are exactly $d_G(v)+1$ colors in $c(I_1[v])$. Now, consider the vertices in $I_2(v)$. Since any vertex in $I_2(v)$ is adjacent with each vertex of $I_1[v]$, the only available colors for these $i$-vertices is the remain colors from $[\Delta+2]\setminus c(I_1[v])$. Therefore, $|c(I_2(v))|\leq\Delta-d_G(v)+1$.
		}\end{proof}
		\begin{lemma}\label{secondlem}
			Let $G$ be a graph, $e$ be a cut edge of $G$ and $C_1$ and $C_2$ be two components of $G-e$. Then $\chi_{vi,l}(G)=\max\{\chi_{vi,l}(H_1),\chi_{vi,l}(H_2)\}$ where $H_i=C_i+e$ for $i\in\{1,2\}$ and $1\leq l\leq\Delta(G)$.
		\end{lemma}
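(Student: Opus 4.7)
The plan is to prove the two inequalities separately. The easy direction $\chi_{vi,l}(G)\geq \max\{\chi_{vi,l}(H_1),\chi_{vi,l}(H_2)\}$ follows from a general subgraph-monotonicity property of $\chi_{vi,l}$: given a $vi$-simultaneous $(k,l)$-coloring $c$ of $G$, its restriction to $V(H)\cup I(H)$ for any subgraph $H\subseteq G$ is automatically proper (every adjacency or incidence in $H$ is also one in $G$) and still respects the $(k,l)$-cap, because $I_2^H(w)\subseteq I_2^G(w)$ for every $w\in V(H)$. Applying this with $H\in\{H_1,H_2\}$ settles the direction.

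For the reverse inequality, set $k=\max\{\chi_{vi,l}(H_1),\chi_{vi,l}(H_2)\}$ and fix $vi$-simultaneous $(k,l)$-colorings $c_1$ of $H_1$ and $c_2$ of $H_2$. Write $e=uv$ with $u\in V(C_1)$ and $v\in V(C_2)$. First I would verify that $\{u,v,(u,v),(v,u)\}$ induces a $K_4$ in $G^{\frac{3}{3}}$ (the three pairs involving $u$ or $v$ have distance at most $2$ in $G^{\frac{1}{3}}$, and the two $i$-vertices are consecutive), so each of $c_1,c_2$ assigns four distinct colors to these four elements. Hence some permutation $\pi$ of $[k]$ carries the $c_2$-colors on them to the corresponding $c_1$-colors; after replacing $c_2$ by $\pi\circ c_2$ (still a valid $(k,l)$-coloring of $H_2$), I may assume $c_1$ and $c_2$ agree on $\{u,v,(u,v),(v,u)\}$.

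I would then glue the two colorings by setting $c(x)=c_1(x)$ when $x\in V(H_1)\cup I(H_1)$ and $c(x)=c_2(x)$ otherwise; the overlap is exactly $\{u,v,(u,v),(v,u)\}$, so $c$ is well defined. Because $e$ is a cut edge, every pair of elements of $V(G)\cup I(G)$ that are adjacent in $G^{\frac{3}{3}}$ lies entirely inside $V(H_1)\cup I(H_1)$ or entirely inside $V(H_2)\cup I(H_2)$, so $c$ inherits properness from $c_1,c_2$. Similarly, for any $w\in V(G)$ all edges of $G$ through $w$ belong to a single $H_i$, hence $I_2^G(w)=I_2^{H_i}(w)$ and the $(k,l)$-cap on $c$ is inherited from the one on $c_i$.

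The main obstacle is really only the permutation/gluing step; once one recognizes $\{u,v,(u,v),(v,u)\}$ as a clique in $G^{\frac{3}{3}}$, both the properness of the glued coloring and its $(k,l)$-bound reduce cleanly, via the cut-edge hypothesis, to the corresponding properties of $c_1$ and $c_2$.
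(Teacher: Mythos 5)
Your proof is correct and follows essentially the same route as the paper's: both directions are handled identically, with the key step being the observation that $\{u,v,(u,v),(v,u)\}$ induces a clique shared by $H_1$ and $H_2$, so that a permutation of colors lets the two colorings agree on the overlap before gluing. You in fact verify more carefully than the paper that no adjacency in $G^{\frac{3}{3}}$ crosses the bridge and that the $(k,l)$-cap is preserved, but the argument is the same.
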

		\begin{proof}{
				Obviously $\chi_{vi,l}(H_1)\leq \chi_{vi,l}(G)$ and $\chi_{vi,l}(H_2)\leq \chi_{vi,l}(G)$. So $\max\{\chi_{vi,l}(H_1),\chi_{vi,l}(H_2)\}\leq\chi_{vi,l}(G)$. Now suppose that $\chi_{vi,l}(H_1)=k_1\geq k_2=\chi_{vi,l}(H_2)$. We show that $\chi_{vi,l}(G)\leq k_1$. Let $c_i:V(H_i)\rightarrow [k_i]$ be a $vi$-simultaneous $(k_i,l)$-colorings ($1\leq i\leq2$) and $e=\{u,v\}$. Since $V(H_1)\cap V(H_2)=\{u, (u,v), (v,u), v\}$ and these four vertices induce a clique, so by suitable permutation on the colors of the coloring $c_1$, we reach to the new coloring $c'_1$ such that $c'_1(x)=c_2(x)$ for any $x\in\{u, (u,v), (v,u), v\}$. Now we can easily prove that the following coloring is a $vi$-simultaneous $(k_1,l)$-coloring:
				\[c(x)=\left\{\begin{array}{cc} c'_1(x) & x\in V(H_1),\\ c_2(x) & x\in V(H_2). \end{array}\right.\]
		}\end{proof}
		\begin{lemma}\label{thirdlem}
			Let $G_1$ and $G_2$ be two graphs, $V(G_1)\cap V(G_2)=\{v\}$ and $G=G_1\cup G_2$. Then
			\[\chi_{vi,1}(G)=\max\{\chi_{vi,1}(G_1),\chi_{vi,1}(G_2), d_G(v)+2\}.\]
		\end{lemma}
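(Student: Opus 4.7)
The plan is to establish the two matching inequalities separately. For the lower bound $\chi_{vi,1}(G) \geq \max\{\chi_{vi,1}(G_1), \chi_{vi,1}(G_2), d_G(v)+2\}$, I would first observe that any $vi$-simultaneous $(k,1)$-coloring of $G$ restricts to such a coloring of each $G_i$: the incidences of $G_i$ form a subset of those of $G$, and the constraint $|c(I_2^G(u))| \leq 1$ descends to the subset $I_2^{G_i}(u) \subseteq I_2^G(u)$. For the third term, $I_1^G[v]$ is a clique of size $d_G(v)+1$ in $G^{\frac{3}{3}}$, and by the $(k,1)$-condition the whole of $I_2^G(v)$ receives a single color that must differ from every color used on $I_1^G[v]$; hence at least $d_G(v)+2$ colors are forced.

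For the upper bound, let $k = \max\{\chi_{vi,1}(G_1), \chi_{vi,1}(G_2), d_G(v)+2\}$, fix optimal $vi$-simultaneous $(\chi_{vi,1}(G_i),1)$-colorings $c_i$ of $G_i$ using palette $[k]$, and write $d_i = d_{G_i}(v)$ so that $d_G(v) = d_1+d_2$. I will apply a permutation $\pi_i$ of $[k]$ to each $c_i$ so that the normalized colorings $c'_i := \pi_i \circ c_i$ satisfy $c'_1(v) = c'_2(v) = 1$, $c'_1(I_2^{G_1}(v)) = c'_2(I_2^{G_2}(v)) = \{2\}$, $c'_1(I_1^{G_1}(v)) = \{3,\ldots,d_1+2\}$, and $c'_2(I_1^{G_2}(v)) = \{d_1+3,\ldots,d_1+d_2+2\}$. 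Such permutations exist because the $d_i+2$ elements of $\{v\} \cup I_1^{G_i}(v) \cup I_2^{G_i}(v)$ receive pairwise distinct colors under $c_i$ (the first $d_i+1$ form a clique, and the unique color on $I_2^{G_i}(v)$ is forced distinct from them) and $d_i+2 \leq k$. Since $E(G_1) \cap E(G_2) = \varnothing$, the only overlap between the $G_1$-side and $G_2$-side in $V(G) \cup I(G)$ is the vertex $v$, where both $c'_i$ assign color $1$, so the combined coloring $c(x) = c'_i(x)$ for $x$ on the $G_i$-side is well-defined.

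The main verification, which I expect to be the principal subtlety, is that $c$ is a proper $vi$-simultaneous $(k,1)$-coloring of $G$. Propriety within each $G_i$-side is inherited from $c'_i$. For a cross-side potential conflict, any pair $x \in (V(G_1) \cup I(G_1)) \setminus \{v\}$ and $y \in (V(G_2) \cup I(G_2)) \setminus \{v\}$ adjacent in $G^{\frac{3}{3}}$ must satisfy $d_{G^{\frac{1}{3}}}(x,v) + d_{G^{\frac{1}{3}}}(v,y) \leq 3$, because every $G^{\frac{1}{3}}$-path between the two sides passes through $v$. This confines such $x,y$ to $I_1^G(v) \cup I_2^G(v)$, and a short case analysis shows the normalization keeps their colors apart: first-incidence against first-incidence uses the disjoint palettes $\{3,\ldots,d_1+2\}$ and $\{d_1+3,\ldots,d_1+d_2+2\}$; first-against-second mixes one of those palettes with $\{2\}$; and the only cross-pair that would receive the same color, namely $x \in I_2^{G_1}(v)$ and $y \in I_2^{G_2}(v)$ both colored $2$, sits at $G^{\frac{1}{3}}$-distance $4$ and is therefore non-adjacent in $G^{\frac{3}{3}}$. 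The $(k,1)$-bound on $|c(I_2^G(u))|$ is immediate: for $u \neq v$ we have $I_2^G(u) = I_2^{G_i}(u)$, so the bound inherits from $c_i$, while for $u = v$ both $I_2^{G_1}(v)$ and $I_2^{G_2}(v)$ are entirely colored $2$.
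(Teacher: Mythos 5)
Your proof is correct and follows essentially the same route as the paper: both establish the lower bound by restriction plus the clique $I_1^G[v]$ together with the forced single color on $I_2^G(v)$, and both obtain the upper bound by permuting the colors of the two colorings so that they agree at $v$ and on the $I_2(v)$-color while the first incidences of $v$ get disjoint palettes, then gluing. The only difference is cosmetic — you normalize both colorings to an explicit canonical form in one step, whereas the paper reaches the same configuration by three successive color swaps applied to $c_2$ — and your explicit check of the cross-side adjacencies through the cut vertex is a welcome bit of extra care.
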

		\begin{proof}{
				Suppose that $k=\max\{\chi_{vi,1}(G_1),\chi_{vi,1}(G_2), d_G(v)+2\}$. Obviously $\chi_{vi,1}(G_1)\leq \chi_{vi,1}(G)$, $\chi_{vi,1}(G_2)\leq \chi_{vi,1}(G)$ and $d_G(v)+2\leq\Delta(G)+2\leq\chi_{vi}(G)\leq\chi_{vi,1}(G)$. So $k\leq\chi_{vi,1}(G)$. Now suppose that $c_1$ and $c_2$ are $vi$-simultaneous $(k,1)$-coloring of $G_1$ and $G_2$ respectively. Note that $I_1^{G_1}[v]$, $I_1^{G_2}[v]$ and $I_1^{G}[v]$ are cliques and $I_2^{G_1}(v)$, $I_2^{G_2}(v)$ and $I_2^{G}(v)$ are independent sets in $G_1$, $G_2$ and $G$ respectively. Also $c_i(I_1^{G_i}[v])\cap c_i(I_2^{G_i}(v))=\varnothing$ and $|c_i(I_2^{G_i}(v))|=1$ for each $i\in [2]$. So by suitable permutations on the colors of $c_2$ in three steps, we reach to the new coloring $c_3$:
				\begin{itemize}
					\item [(1)] If $c_1(v)=a\neq b=c_2(v)$ then we just replace colors $a$ and $b$ together in $c_2$ and otherwise we do nothing. We denote the new coloring by $c'_2$.
					\item [(2)] Let $c_1(x)=c$ and $c'_2(y)=d$ for each $x\in I_2^{G_1}(v)$ and $y\in I_2^{G_2}(v)$. If $c\neq d$ then we just replace colors $c$ and $d$ together in $c'_2$. Otherwise we do nothing. We denote the new coloring by $c''_2$. Obviously, $c\neq a\neq d$ and so $c''_2(v)=a$.
					\item [(3)] If $c''_2(I_1^{G_2}(v))\cap c_1(I_1^{G_1}(v))=\varnothing$ we do nothing. Otherwise, suppose that $c''_2(I_1^{G_2}(v))\cap c_1(I_1^{G_1}(v))=\{a_1,\ldots,a_s\}$. Since $k\geq d_G(v)+2$ and $|c''_2(I_{G_2}[v])\cup c_1(I_{G_1}[v])|=d_{G}(v)+2-s$, there are $s$ colors $b_1,\ldots,b_s$ which have not appeared in $c''_2(I_{G_2}[v])\cup c_1(I_{G_1}[v])$. Now we replace $a_i$ and $b_i$ together for each $i\in\{1,\ldots,s\}$. We denote the new coloring by $c_3$.
				\end{itemize}
				Now we can easily show that the following function is a $vi$-simultaneous proper $(k,1)$-coloring for $G$:
				\[c(x)=\left\{\begin{array}{cc} c_1(x) & x\in V(G_1)\cup I(G_1),\\ c_3(x) & x\in V(G_2)\cup I(G_2). \end{array}\right.\]
		}\end{proof}
		\begin{theorem}\label{blocks}
			Let $k\in\mathbb{N}$ and $G$ be a graph with blocks $B_1,\ldots,B_k$. Then
			\[\chi_{vi,1}(G)=\max\{\chi_{vi,1}(B_1),\ldots,\chi_{vi,1}(B_k), \Delta(G)+2\}.\]
			Specially, $\chi_{vi,1}(G)=\max\{\chi_{vi,1}(B_1),\ldots,\chi_{vi,1}(B_k)\}$ when $G$ has at least one $\Delta(G)$-vertex which is not cut vertex.
		\end{theorem}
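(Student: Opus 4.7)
The plan is to induct on the number $k$ of blocks of $G$, using Lemma~\ref{thirdlem} as the engine of the induction step, so the argument is essentially a clean assembly of the ``one shared vertex'' result along the block--cutpoint tree of $G$.

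First I would dispose of the lower bound. Each block $B_i$ is a subgraph of $G$, hence $\chi_{vi,1}(B_i)\le \chi_{vi,1}(G)$. For the $\Delta(G)+2$ term, pick any $\Delta(G)$-vertex $v$ of $G$; then $\{v\}\cup I_1^G(v)\cup\{(u,v)\}$ for some $u\in N_G(v)$ induces a clique of size $\Delta(G)+2$ in $G^{\frac{3}{3}}$, so $\Delta(G)+2\le\omega(G^{\frac{3}{3}})\le \chi(G^{\frac{3}{3}})=\chi_{vi}(G)\le\chi_{vi,1}(G)$. Taking the max yields the $\le$ direction.

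For the upper bound, write $M=\max\{\chi_{vi,1}(B_1),\ldots,\chi_{vi,1}(B_k),\Delta(G)+2\}$ and induct on $k$. The case $k=1$ is immediate since $G=B_1$. For $k\ge 2$, I would choose a leaf block $B$ of the block--cutpoint tree of $G$, with $v$ its unique cut vertex of $G$ that lies in $B$. Set $G'=G-(V(B)\setminus\{v\})$, so that $v\in V(G')\cap V(B)=\{v\}$ and $G=G'\cup B$. The blocks of $G'$ are exactly the blocks of $G$ other than $B$, and $\Delta(G')\le\Delta(G)$, so the inductive hypothesis gives $\chi_{vi,1}(G')\le M$. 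Since $\chi_{vi,1}(B)\le M$ and $d_G(v)+2\le \Delta(G)+2\le M$, Lemma~\ref{thirdlem} applied to $G=G'\cup B$ yields
\[
\chi_{vi,1}(G)=\max\{\chi_{vi,1}(G'),\chi_{vi,1}(B),d_G(v)+2\}\le M,
\]
closing the induction.

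For the ``specially'' clause, let $w$ be a $\Delta(G)$-vertex of $G$ that is not a cut vertex. Then $w$ belongs to a unique block $B_{i_0}$, and every neighbor of $w$ in $G$ lies in $B_{i_0}$, so $d_{B_{i_0}}(w)=\Delta(G)$. Applying the clique argument from the lower bound inside $B_{i_0}$ gives $\chi_{vi,1}(B_{i_0})\ge \Delta(G)+2$, which absorbs the $\Delta(G)+2$ term and reduces the formula to $\max_i\chi_{vi,1}(B_i)$.

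The only real obstacle is purely bookkeeping: verifying that the hypothesis $d_G(v)+2\le M$ needed to invoke Lemma~\ref{thirdlem} survives the induction (it does, because $d_G(v)\le\Delta(G)$ and $M$ contains $\Delta(G)+2$), and confirming that a leaf block always exists when $k\ge 2$, which is standard from the tree structure of the block--cutpoint graph.
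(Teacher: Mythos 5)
Your proposal is correct and follows exactly the paper's intended route: the paper's proof is the one-line remark that the result follows by induction on $k$ together with Lemma~\ref{thirdlem}, and your argument is precisely that induction carried out in detail (peeling off a leaf block of the block--cutpoint tree, applying the lemma at the shared cut vertex, and noting $d_G(v)+2\le\Delta(G)+2$). The lower-bound and ``specially'' clauses you supply are the routine verifications the paper leaves implicit, and they are handled correctly.
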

		\begin{proof}{
				By induction on the number $k$ and applying Lemma \ref{thirdlem}, the proof will be done. 
		}\end{proof}
		We can determine an upper bound on the $vi$-simultaneous chromatic number $\chi_{vi,s}(G)$ in terms of $\Delta(G)$ and list chromatic number of $G$.\\
		\begin{definition}\label{listcoloring}\cite{bondy}
			Let $G$ be a graph and $L$ be a function which assigns to each vertex $v$ of $G$ a set $L(v)\subset\mathbb{N}$, called the list of $v$. A coloring $c:V(G)\rightarrow\mathbb{N}$ such that $c(v)\in L(v)$ for all $v\in V(G)$ is called a list coloring of $G$ with respect to $L$, or an $L$-coloring, and we say that $G$ is $L$-colorable. A graph $G$ is $k$-list-colorable if it has a list coloring whenever all the lists have length $k$. The smallest value of $k$ for which $G$ is $k$-list-colorable is called the list chromatic number of $G$, denoted $\chi_{l}(G)$.
		\end{definition}
		\begin{theorem}\label{upperbound-list}
			Let $G$ be a  nonempty graph and $s\in\mathbb{N}$. Then\\
			(i) $\chi_{vi,s}(G)\leq\max\{\chi_{i,s}(G),\chi_{l}(G)+\Delta(G)+s\}$,\\
			(ii) If $\chi_{i,s}(G)\geq\chi_{l}(G)+\Delta(G)+s$, then $\chi_{vi,s}(G)=\chi_{i,s}(G)$.
		\end{theorem}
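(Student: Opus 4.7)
The plan is to produce a $vi$-simultaneous $(k,s)$-coloring of $G$ in two decoupled stages---first color the incidences, then extend to the vertices via list coloring---for $k=\max\{\chi_{i,s}(G),\chi_l(G)+\Delta(G)+s\}$. Start with an incidence $(k,s)$-coloring $c_I\colon I(G)\to[k]$, which exists because $\chi_{i,s}(G)\le k$. For each $v\in V(G)$ assign the list $L(v)=[k]\setminus c_I(I(v))$, so that any coloring of the vertices chosen from these lists is automatically compatible with $c_I$ on vertex--incidence adjacencies.

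The crucial estimate is that every list is large enough. Since $I_1(v)$ is a clique in $\mathcal{I}(G)$, we have $|c_I(I_1(v))|=d_G(v)$, and by the $(k,s)$-condition $|c_I(I_2(v))|\le s$; hence $|c_I(I(v))|\le d_G(v)+s\le\Delta(G)+s$ and therefore $|L(v)|\ge k-\Delta(G)-s\ge\chi_l(G)$. By the definition of $\chi_l(G)$ there is a proper coloring $c_V$ of $G$ with $c_V(v)\in L(v)$ for every $v$. Combining $c_V$ on $V(G)$ with $c_I$ on $I(G)$ handles every type of adjacency in $G^{\frac{3}{3}}$: two $t$-vertices via $c_V$, two $i$-vertices via $c_I$, and a $t$-vertex $v$ with an $i$-vertex $j$ via the observation (from the distance analysis in $G^{\frac{1}{3}}$) that such a $j$ necessarily lies in $I(v)$, where $c_V(v)\neq c_I(j)$ by construction of $L(v)$. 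The $(k,s)$-property on $I_2(v)$ is inherited from $c_I$, so $c$ is a $vi$-simultaneous $(k,s)$-coloring, proving (i).

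For part (ii), the hypothesis forces $\max\{\chi_{i,s}(G),\chi_l(G)+\Delta(G)+s\}=\chi_{i,s}(G)$, so part (i) yields $\chi_{vi,s}(G)\le\chi_{i,s}(G)$; the reverse inequality is immediate, since restricting any $vi$-simultaneous $(k,s)$-coloring to $I(G)$ is an incidence $(k,s)$-coloring. The main---and essentially only---delicate point is the uniform lower bound on $|L(v)|$, which rests on the clique structure of $I_1(v)$ in $\mathcal{I}(G)$ together with the $(k,s)$-restriction on $I_2(v)$; once these force $|c_I(I(v))|\le\Delta(G)+s$, the list-coloring step is automatic and the two stages combine without further interaction.
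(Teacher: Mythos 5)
Your proposal is correct and follows essentially the same route as the paper: fix an incidence $(k,s)$-coloring, give each vertex the list of the at most $\Delta(G)+s$ colors not appearing on its incidences, and finish with a list coloring, with part (ii) following from the trivial reverse inequality. Your verification that a $t$-vertex is adjacent in $G^{\frac{3}{3}}$ only to the $i$-vertices of $I(v)$ is a detail the paper leaves implicit, but the argument is the same.
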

		\begin{proof}{
				(i) Suppose that $\max\{\chi_{i,s}(G),\chi_{l}(G)+\Delta(G)+s\}=k$. So there exists an incidence $(k,s)$-coloring $c_i: I(G)\rightarrow [k]$ of $G$ and
				hence $|c_i(I_2(u))|\leq s$ for any vertex $u\in V(G)$. Therefore, $|c_i(I_G(u))|\leq \Delta(G)+s$. Now we extend $c_i$ to a $vi$-simultaneous $(k,s)$-coloring $c$ of $G$. The set of available colors for the vetex $u$ is $L(u)=[k]\setminus c_i(I_G(u))$ which has at least $k-\Delta(G)-s\geq \chi_l(G)$ colors. Since $|L(u)|\geq\chi_{l}(G)$ for any vertex $u\in V(G)$, there exists a proper vertex coloring $c_v$ of $G$ such that $c_v(u)\in L(u)$. Now one can easily show that the following coloring is a $vi$-simultaneous $(k,s)$-coloring of $G$:
				\[c(x)=\left\{\begin{array}{cc} c_i(x) & x\in I(G),\\ c_v(x) & x\in V(G). \end{array}\right.\]
				(ii) If $\chi_{i,s}(G)\geq\chi_{l}(G)+\Delta(G)+s$, then $\chi_{vi,s}(G)\leq\chi_{i,s}(G)$. In addition, any $vi$-simultaneous $(k,s)$-coloring of $G$ induces an incidence $(k,s)$-coloring of $G$ and so $\chi_{i,s}(G)\leq\chi_{vi,s}(G)$. Therefore, $\chi_{vi,s}(G)=\chi_{i,s}(G)$.
		}\end{proof}
		\begin{corollary}\label{upperbound-list-vi1}
			$\chi_{vi,1}(G)\leq\max\{\chi(G^2),\chi_{l}(G)+\Delta(G)+1\}$ for any nonempty graph $G$. Specially, if $\chi(G^2)\geq\chi_{l}(G)+\Delta(G)+1$, then $\chi_{vi,1}(G)=\chi(G^2)$.
		\end{corollary}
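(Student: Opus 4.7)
The plan is to deduce the corollary from Theorem~\ref{upperbound-list} by specialising to $s=1$, so the only substantive task is to identify $\chi_{i,1}(G)$ with $\chi(G^2)$.

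First I would establish the bijective correspondence between incidence $(k,1)$-colorings of $G$ and proper $k$-colorings of $G^2$. Let $c:I(G)\to[k]$ be an incidence $(k,1)$-coloring, and for each vertex $u$ let $f(u)$ denote the unique color appearing on $I_2(u)$. Since $(u,v)\in I_1(u)$ is the same incidence as $(u,v)\in I_2(v)$ (viewed from vertex $v$), the equality $c((u,v))=f(v)$ holds for every edge $uv$. The incidence-adjacency conditions then translate cleanly: adjacency of the two first incidences $(u,v)$ and $(u,w)$ at $u$ forces $f(v)\neq f(w)$ for all distinct $v,w\in N(u)$, while adjacency of $(u,v)$ and $(v,u)$ forces $f(u)\neq f(v)$ for every edge $uv$. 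Together these say precisely that $f$ is a proper coloring of $G^2$. Conversely, given any proper $k$-coloring $f$ of $G^2$, setting $c((u,v)):=f(v)$ yields a valid incidence $k$-coloring with $|c(I_2(u))|=1$ for every $u$, hence an incidence $(k,1)$-coloring of $G$. This proves $\chi_{i,1}(G)=\chi(G^2)$.

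With this identification in hand, the corollary is immediate: substituting $s=1$ into part (i) of Theorem~\ref{upperbound-list} gives
\[
\chi_{vi,1}(G)\leq\max\{\chi_{i,1}(G),\chi_l(G)+\Delta(G)+1\}=\max\{\chi(G^2),\chi_l(G)+\Delta(G)+1\},
\]
and substituting into part (ii) gives that, when $\chi(G^2)\geq\chi_l(G)+\Delta(G)+1$, one has $\chi_{vi,1}(G)=\chi_{i,1}(G)=\chi(G^2)$.

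The only step that requires any thought is the equivalence $\chi_{i,1}(G)=\chi(G^2)$, and even this is a straightforward unpacking of the definitions once one notes that an incidence is simultaneously a first incidence at one endpoint and a second incidence at the other, so the $s=1$ constraint forces the entire coloring of $I(G)$ to be determined by a single vertex-function $f$. No genuine obstacle is anticipated; the corollary is essentially a dictionary translation of Theorem~\ref{upperbound-list} for the special case $s=1$.
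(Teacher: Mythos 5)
Your proposal is correct and follows exactly the route the paper intends: the corollary is stated without proof as the $s=1$ specialization of Theorem~\ref{upperbound-list}, resting on the standard identification $\chi_{i,1}(G)=\chi(G^2)$, which you verify explicitly via the dictionary $c((u,v))=f(v)$ (consistent with the paper's own use of the $(v,2)$-copies in Theorem~\ref{start2}, where the induced subgraph on $V(G)\times\{2\}$ is precisely $G^2$). The only cosmetic caveat is to handle isolated vertices of $G$ separately when defining $f$ from $c$, since they have no second incidences; this does not affect the conclusion.
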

		\begin{corollary}\label{upperbound-diam-vi1}
			Let $G$ be a graph of order $n$ with $diam(G)=2$. Then $\chi_{vi,1}(G)\leq\max\{n, \chi_l(G)+\Delta(G)+1\}$. Specially if $\Delta(G)\leq\frac{n}{2}-1$, then $\chi_{vi,1}(G)=n$.
		\end{corollary}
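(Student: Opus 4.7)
The plan is to reduce everything to Corollary~\ref{upperbound-list-vi1}, which states $\chi_{vi,1}(G)\leq\max\{\chi(G^2),\chi_{l}(G)+\Delta(G)+1\}$ with equality forced when $\chi(G^2)\geq\chi_{l}(G)+\Delta(G)+1$. So I would begin by observing the simple but crucial consequence of $\mathrm{diam}(G)=2$: any two distinct vertices of $G$ are at distance $1$ or $2$, hence $G^2$ is the complete graph $K_n$. In particular $\chi(G^2)=n$.

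With this single observation, the first inequality is immediate: plugging $\chi(G^2)=n$ into Corollary~\ref{upperbound-list-vi1} yields
\[
\chi_{vi,1}(G)\;\leq\;\max\{n,\;\chi_{l}(G)+\Delta(G)+1\},
\]
which is exactly the main claim.

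For the "specially" part, under the hypothesis $\Delta(G)\leq \tfrac{n}{2}-1$, I would combine the standard greedy list-coloring bound $\chi_{l}(G)\leq\Delta(G)+1$ with the assumption to get
\[
\chi_{l}(G)+\Delta(G)+1\;\leq\;2\Delta(G)+2\;\leq\;n\;=\;\chi(G^{2}).
\]
This puts us exactly in the regime $\chi(G^{2})\geq\chi_{l}(G)+\Delta(G)+1$ where Corollary~\ref{upperbound-list-vi1} upgrades its upper bound to an equality, giving $\chi_{vi,1}(G)=\chi(G^{2})=n$.

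I do not expect a real obstacle here; the only substantive point is recognising that $\mathrm{diam}(G)=2$ collapses $G^2$ to $K_n$, after which Corollary~\ref{upperbound-list-vi1} and the elementary list-coloring bound $\chi_l(G)\leq\Delta(G)+1$ do all the work. If one wanted to avoid invoking $\chi_{l}(G)\leq\Delta(G)+1$ explicitly, one could argue directly that under $\Delta(G)\leq\tfrac{n}{2}-1$ the residual lists of size $n-\Delta(G)-1\geq\Delta(G)$ allow a greedy completion of the incidence coloring supplied by the proof of Corollary~\ref{upperbound-list-vi1}, but the shortest route is the one above.
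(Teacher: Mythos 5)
Your proof is correct and is exactly the derivation the paper intends: the corollary is stated without proof immediately after Corollary~\ref{upperbound-list-vi1}, and the only steps needed are the observation that $\mathrm{diam}(G)=2$ forces $G^2\cong K_n$ (so $\chi(G^2)=n$) together with the greedy bound $\chi_l(G)\leq\Delta(G)+1$ to verify $\chi_l(G)+\Delta(G)+1\leq 2\Delta(G)+2\leq n$ in the second part. Both steps check out, so there is nothing to add.
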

		\begin{remark}{\rm
In \cite{Cranston}, it was proved that the square of any cubic graph other than the Petersen graph is 8-list-colorable and so $\chi(G^2)\leq8$. In addition the diameter of the Petersen graph $P$ is two. Therefore, by Corollaries \ref{upperbound-list-vi1} and \ref{upperbound-diam-vi1}, $\chi_{vi,1}(P)=10$ for the Petersen graph and $\chi_{vi,1}(G)\leq 8$ for any graph $G$ with $\Delta(G)=3$ other than the Petersen graph.
		}\end{remark}
		\section{$k$-degenerated graphs}\label{sec3}
		A graph $G$ is said to be $k$-degenerated if any subgraph of $G$ contains a vertex of degree at most $k$. For example, Any graph $G$ is 1-degenerated if and only if $G$ is a forest. We can give an upper bound for $vi$-simultaneous chromatic number of a $k$-degenerated graph in terms of $k$ and its maximum degree.\\
		Let $\mathcal{F}=\{A_1,\ldots,A_n\}$ be a finite family of $n$ subsets of a finite set $X$. A system of distinct representatives (SDR) for the family $\mathcal{F}$ is a set $\{a_1,\ldots,a_n\}$ of distinct elements of $X$ such that $a_i\in A_i$ for all $i\in [n]$.
		\begin{theorem}\label{kdegenerated}
			Let $k\in\mathbb{N}$ and $G$ be a $k$-degenerated graph with $\Delta(G)\geq2$. Then $\chi_{vi,k}(G)\leq \Delta(G)+2k$. 
		\end{theorem}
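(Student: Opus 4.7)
The plan is to proceed by induction on $|V(G)|$, exploiting the $k$-degeneracy to peel off a low-degree vertex at each step. Because $G$ is $k$-degenerate, pick a vertex $v$ with $d_G(v)=d\le k$, let $u_1,\ldots,u_d$ be its neighbors, and set $G'=G-v$. Then $G'$ is still $k$-degenerate with $\Delta(G')\le\Delta(G)$, so by the inductive hypothesis $G'$ admits a $vi$-simultaneous $(k,k)$-coloring $c'$ with colors from $[\Delta(G)+2k]$; the small cases where $\Delta(G')<2$ (so the hypothesis formally does not apply) are handled directly, as $G'^{\frac{3}{3}}$ then has chromatic number at most $4\le\Delta(G)+2k$.

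To extend $c'$ to $G$, I must assign colors to $v$ and to the $2d$ new incidences $(v,u_i),(u_i,v)$ for $i=1,\ldots,d$. First pick $c(v)$ avoiding $\{c'(u_1),\ldots,c'(u_d)\}$, which rules out only $d\le k$ colors. Next, because each $(v,u_i)$ lies in $I_2(u_i)$, the $(k,k)$-condition forces $c((v,u_i))\in c'(I_2^{G'}(u_i))$ whenever that palette is already saturated at size $k$. Accordingly, define the legal set for $(v,u_i)$ by
\[
A_i=\begin{cases}c'(I_2^{G'}(u_i))\setminus\{c(v)\}&\text{if }|c'(I_2^{G'}(u_i))|=k,\\ [\Delta(G)+2k]\setminus\bigl(\{c(v),c'(u_i)\}\cup c'(I_1^{G'}(u_i))\bigr)&\text{otherwise,}\end{cases}
\]
so that $|A_i|\ge k-1$. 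Since the $(v,u_i)$'s form a clique in $G^{\frac{3}{3}}$, distinct colors are required, i.e., I need a system of distinct representatives for $\{A_i\}_{i=1}^d$, which Hall's marriage theorem provides once $\bigl|\bigcup_{i\in T}A_i\bigr|\ge|T|$ holds for every $T\subseteq\{1,\ldots,d\}$. Finally, the second incidences $(u_i,v)$ are pairwise non-adjacent in $G^{\frac{3}{3}}$ and can be assigned greedily from their legal sets; the counting works out because $c((v,u_i))$ is absorbed into $c'(I_2^{G'}(u_i))$ whenever the palette of $u_i$ was saturated, which keeps the total number of forbidden colors for $(u_i,v)$ below $\Delta(G)+2k$.

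The main obstacle is verifying Hall's condition for the SDR in the tight case $d=k$ with every palette $c'(I_2^{G'}(u_i))$ already saturated: each $A_i$ is then a $(k-1)$-subset of a $k$-element palette, so Hall's condition fails exactly when all these palettes collapse onto a common $(k-1)$-set after removing $c(v)$. I would resolve this by choosing $c(v)$ strategically with respect to the palettes $c'(I_2^{G'}(u_i))$ — placing $c(v)$ inside at least one palette to keep the union of the $A_i$'s genuinely large — and, if no such choice of $c(v)$ exists, by a small local recoloring inside $G'$ that desaturates one palette below $k$ before extending. This interplay between the saturated palette constraint and Hall's condition is the technical heart of the proof.
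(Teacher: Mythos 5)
Your overall strategy---peel off a vertex $v$ of degree $d\le k$, extend a $vi$-simultaneous $(\Delta(G)+2k,k)$-coloring of $G'=G-v$ by finding an SDR for the first incidences of $v$ via Hall's theorem, and finish greedily---is exactly the paper's argument. The one structural difference is that you fix the color of $v$ \emph{first}, and that single choice is the source of both gaps below. The paper colors $I_1(v)$ first, then $I_2(v)$, and chooses $c(v)$ \emph{last}: since $v$ has only $3d\le 3k<\Delta(G)+2k$ colored neighbors in $G^{\frac{3}{3}}$ (the case $k=\Delta(G)$ being dispatched separately by the trivial bound $\chi_{vi}(G)\le 3\Delta(G)$), a color for $v$ always remains. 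With that ordering every $A_i$ has size at least $k$ rather than $k-1$, so Hall's condition $\bigl|\bigcup_{i\in T}A_i\bigr|\ge k\ge|T|$ is immediate and the entire ``technical heart'' you describe evaporates. Moreover, your proposed repair of the Hall failure is stated backwards: when $d=k$ and all palettes are saturated and equal to a common $k$-set $P$, what you need is $c(v)\notin P$ --- placing $c(v)$ \emph{inside} a palette is precisely what shrinks the union to $k-1$ --- and the fallback ``small local recoloring'' is never specified. As written, this step is a plan, not a proof.

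The second gap is one you did not flag: the greedy step for the incidences $(u_i,v)$ does not ``work out'' once $c(v)$ is already fixed. The forbidden set for $(u_i,v)$ is $c'(I_{G'}[u_i])\cup c(I_1^{G}(v))\cup\{c(v)\}$. In the worst case ($d=k$, $d_G(u_i)=\Delta(G)$, palette of $u_i$ saturated) the union of the first two sets already has size $\Delta(G)+2k-1$ even after absorbing $c((v,u_i))$ into $c'(I_2^{G'}(u_i))$ as you note, and nothing forces $c(v)$ to coincide with any of those colors (the only constraint on $c(v)$ relative to $u_i$ is $c(v)\ne c'(u_i)$), so all $\Delta(G)+2k$ colors can be forbidden and the greedy assignment stalls. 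Deferring the choice of $c(v)$ to the end removes this extra $+1$ and leaves at least one legal color, which is exactly the paper's count. So your proposal is repairable, but only by reordering it into the paper's proof; as it stands, both the SDR step and the greedy step can fail.
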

		\begin{proof}{
				If $k=\Delta(G)$, then $\chi_{vi,k}(G)=\chi_{vi}(G)\leq 3\Delta(G)=\Delta(G)+2k$. So we suppose that $1\leq k\leq\Delta(G)-1$. Assume the contrary, and let the theorem is false and $G$ be a minimal counter-example. Let $u$ be a vertex in $G$ with degree $r\leq k$ and $N_G(u)=\{u_1,\ldots,u_r\}$ and let $G'=G-u$. According to the minimality of $G$, $\chi_{vi,k}(G')\leq \Delta(G)+2k$ and there exists a $vi$-simultaneous $(\Delta(G)+2k,k)$-coloring $c'$ of $G'$. We extend $c'$ to a $vi$-simultaneous $(\Delta(G)+2k,k)$-coloring $c$ of $G$ which is a contradiction.\\
				Firstly, we color the vertices of $I_1(u)$. For each $(u,u_i)\in I_1(u)$ there are at least $k$ available colors if $|c'(I_2(u_i))|=k$ and there are at least $2k$ available colors if $|c'(I_2(u_i))|\leq k$. Let $A_i$ be the set of available colors for $(u,u_i)\in I_1(u)$. Since we must select distinct colors for the vertices of $I_1(u)$, we prove that the family $\mathcal{F}=\{A_1,\ldots,A_r\}$ has a system of distinct representatives. Because $|\cup_{j\in J}A_j|\geq k\geq |J|$ for any subset $J\subseteq [r]$, using Hall's Theorem (see Theorem 16.4 in \cite{bondy}), we conclude that $\mathcal{F}$ has an SDR $\{a_1,\ldots,a_r\}$ such that $|\{a_j\}\cup c'(I_2(u_j))|\leq k$ for any $j\in [r]$. We color the vertex $(u,u_j)$ by $a_j$ for any $j\in [r]$. Now we color the vertices of $I_2(u)$. Since $|c'(I_{G'}[u_j]\cup c(I_1^{G}(u))|<\Delta(G)+2k$ for each $j\in [r]$, there exists at least one available color for the vertex $(u_j,u)$. Finally, we select the color of the vertex $u$. Since $|I_G(u)\cup N_G(u)|=3r<\Delta(G)+2k$, we can color the vertex $u$ and complete the coloring of $c$.
		}\end{proof}
		\begin{corollary}\label{tree}
			Let $F$ be a forest. Then
			\[\chi_{vi,1}(F)=\left\{\begin{array}{lll} 1 & \Delta(F)=0,\\ 4 & \Delta(F)=1,\\ \Delta(F)+2 & \Delta(F)\geq2. \end{array}\right.\]
		\end{corollary}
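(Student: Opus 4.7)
The plan is to handle the three cases of the piecewise formula separately, with the bulk of the work being a direct invocation of Theorem \ref{kdegenerated}.

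When $\Delta(F)=0$, the forest $F$ consists of isolated vertices and has no incidences, so a single color trivially yields a $vi$-simultaneous $(1,1)$-coloring, giving $\chi_{vi,1}(F)=1$. When $\Delta(F)=1$, every component of $F$ is either an isolated vertex or a single edge $uv$. For such an edge, the four elements $u$, $v$, $(u,v)$, $(v,u)$ are pairwise adjacent or incident in the $vi$-sense (equivalently, they form a $K_4$ in $F^{3/3}$), so at least $4$ colors are required. Conversely, since distinct components share no vertices or incidences, I can color each edge-component with the four colors $\{1,2,3,4\}$ assigned to $u,(u,v),(v,u),v$ respectively; each vertex then has $|c(I_2(\cdot))|=1$, so this is a $vi$-simultaneous $(4,1)$-coloring, yielding $\chi_{vi,1}(F)=4$.

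For $\Delta(F)\geq 2$, the lower bound is immediate from the fact noted earlier in the paper that $\omega(F^{\frac{3}{3}})=\Delta(F)+2$, which forces $\chi_{vi,1}(F)\geq \chi_{vi}(F)=\chi(F^{\frac{3}{3}})\geq \Delta(F)+2$. For the matching upper bound, I invoke Theorem \ref{kdegenerated}: since every subgraph of a forest contains a leaf or an isolated vertex, forests are $1$-degenerated, and the hypothesis $\Delta(F)\geq 2$ lets me apply the theorem with $k=1$ to obtain
\[
\chi_{vi,1}(F)\leq \Delta(F)+2\cdot 1 = \Delta(F)+2.
\]

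There is no real obstacle here; the only point requiring a touch of care is verifying that the $\Delta(F)=1$ construction actually satisfies the $(k,1)$-condition on $I_2$ and that components can be colored independently, both of which are immediate from the definition. The heart of the corollary is the $1$-degeneracy of forests feeding into Theorem \ref{kdegenerated}.
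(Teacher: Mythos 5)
Your proposal is correct and follows essentially the same route as the paper: the cases $\Delta(F)\leq 1$ are dispatched as trivial, the lower bound for $\Delta(F)\geq 2$ comes from $\omega(F^{\frac{3}{3}})=\Delta(F)+2$, and the upper bound comes from the $1$-degeneracy of forests fed into Theorem \ref{kdegenerated} with $k=1$. The only difference is that you spell out the $\Delta(F)\leq 1$ cases explicitly, which the paper simply calls trivial.
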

		\begin{proof}{ The proof is trivial for $\Delta(F)\leq1$. So we suppose that $\Delta(F)\geq2$. Each forest is a 1-degenerated graph. So by use of Theorem \ref{kdegenerated} we have $\chi_{vi,1}(F)\leq\Delta(F)+2$. In addition, $\chi_{vi,1}(F)\geq\chi_{vi}(F)=\chi(F^{\frac{3}{3}})\geq\omega(F^{\frac{3}{3}})=\Delta(F)+2$. Hence $\chi_{vi,1}(F)=\Delta(F)+2$.
		}\end{proof}
		\begin{corollary}
			For any $n\in\mathbb{N}\setminus\{1\}$, $\chi_{vi,1}(P_n)=4$.
		\end{corollary}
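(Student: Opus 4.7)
The plan is to recognize that this is an immediate corollary of the preceding Corollary \ref{tree}, applied in two cases according to the value of $n$. Since every path $P_n$ is a tree, it is in particular a forest, so Corollary \ref{tree} applies directly with $F = P_n$.

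First I would handle the degenerate case $n = 2$: here $P_2$ consists of a single edge and $\Delta(P_2) = 1$, so the second branch of Corollary \ref{tree} gives $\chi_{vi,1}(P_2) = 4$. Then for $n \geq 3$, the path has at least one internal vertex, so $\Delta(P_n) = 2$, and the third branch of Corollary \ref{tree} gives $\chi_{vi,1}(P_n) = \Delta(P_n) + 2 = 2 + 2 = 4$. In both cases the value is $4$, which establishes the claim uniformly for all $n \in \mathbb{N} \setminus \{1\}$.

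There is no real obstacle here, since all the content is already packaged in Corollary \ref{tree}; the only thing to verify is that the case split on $\Delta(P_n)$ covers exactly the range $n \geq 2$, which it does ($n = 2$ forces $\Delta = 1$, while $n \geq 3$ forces $\Delta = 2$). The case $n = 1$ is correctly excluded from the statement because $P_1$ has no edges and hence falls into the $\Delta(F) = 0$ branch of the corollary, giving $\chi_{vi,1}(P_1) = 1 \neq 4$.
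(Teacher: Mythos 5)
Your proof is correct and is exactly the argument the paper intends: the corollary is stated without proof as an immediate consequence of Corollary \ref{tree}, and your case split ($n=2$ giving $\Delta=1$ and hence the value $4$ from the second branch; $n\geq 3$ giving $\Delta=2$ and hence $\Delta+2=4$ from the third branch) is the standard way to read it off. Nothing is missing.
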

		\begin{remark}{\rm
				Using the following simple algorithm, we have a proper $(\Delta+2)$-coloring for $\frac{3}{3}$-power of any tree $T$ with $\Delta(T)=\Delta$:\\
				Suppose that $v_1,\ldots,v_n$ are  $t$-vertices of $T$ and the $t$-vertex $v_1$ of degree $\Delta$ is the root of $T$. To achieve a $(\Delta+2)$-coloring of $T^{\frac{3}{3}}$, assign color $1$ to the $v_1$ and color all $i$-vertices in $I_1(v_1)$ with distinct colors in $\{2,\ldots,\Delta+1\}$. Note that, since these $i$-vertices are pairwise adjacent, they must have different colors. Also, color all  $i$-vertices in $I_2(v_1)$ with color $\Delta+2$.\\
				Now, to color the other $t$-vertices and $i$-vertices of $T$, for the $t$-vertex $v_i$ with colored parent $p_{v_i}$, $2\leq i\leq n$, color all the uncolored  $i$-vertices in $I_2(v_i)$ same as $(p_{v_i}v_i)_1$. Then color $v_i$ with a color from $[\Delta+2]\setminus\{c(p_{v_i}),c((p_{v_i}v_i)_1), c((p_{v_i}v_i)_2)\}$. Now, color all the uncolored $i$-vertices in $I_1(v_i)$ with distinct $\Delta-1$ colors from $[\Delta+2]\setminus\{c((p_{v_i}v_i)_1), c((p_{v_i}v_i)_2), c(v_i)\}$.}
		\end{remark}
		As each outerplanar graph is a $2$-degenerated graph and each planar graph is a $5$-degenerated graph, we can result the following corollary by use of the Theorem \ref{kdegenerated}.
\begin{corollary}
		Let $G$ be a graph with maximum degree $\Delta$.
	\begin{itemize}
	\item[(i)] If $G$ is an outerplanar graph, then $\chi_{vi,2}(G)\leq \Delta+4$.
		\item[(ii)] If $G$ is a planar graph, then $\chi_{vi,5}(G)\leq \Delta+10$.
	\end{itemize}
\end{corollary}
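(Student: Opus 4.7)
The plan is to apply Theorem \ref{kdegenerated} directly after recording the degeneracy of each class. First I would recall the classical structural facts: every outerplanar graph has a vertex of degree at most $2$ (this is standard, e.g.\ from the ear decomposition or from the fact that outerplanar graphs are subgraphs of maximal outerplanar graphs, each of whose inner faces is a triangle), and every planar graph has a vertex of degree at most $5$ (an immediate consequence of Euler's formula). Since both properties are hereditary, outerplanar graphs are $2$-degenerated and planar graphs are $5$-degenerated.

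For part (i), with $k=2$ and $\Delta(G)\geq 2$, Theorem \ref{kdegenerated} yields $\chi_{vi,2}(G)\leq \Delta(G)+2\cdot 2=\Delta+4$. For part (ii), with $k=5$ and $\Delta(G)\geq 2$, the same theorem gives $\chi_{vi,5}(G)\leq \Delta(G)+2\cdot 5=\Delta+10$. Both inequalities are exactly the desired bounds.

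The only loose ends are the degenerate cases $\Delta(G)\leq 1$, where Theorem \ref{kdegenerated} does not formally apply. These are trivial: if $\Delta(G)=0$ then $G$ has no incidences and $\chi_{vi,s}(G)=1\leq \Delta+2k$; if $\Delta(G)=1$ then $G$ is a disjoint union of $K_2$'s and a direct argument (or Corollary \ref{tree}) shows $\chi_{vi,s}(G)=4\leq \Delta+2k$ in both cases $k=2,5$. So the bounds hold unconditionally.

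There is no real obstacle here: the statement is essentially a two-line specialization of Theorem \ref{kdegenerated} to the two best-known families of bounded-degeneracy graphs. The only substantive content sits in Theorem \ref{kdegenerated} itself, which has already been proved, so the corollary reduces to citing the correct degeneracy parameter in each case.
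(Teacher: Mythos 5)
Your proposal is correct and matches the paper exactly: the authors derive this corollary by noting that outerplanar graphs are $2$-degenerated and planar graphs are $5$-degenerated and then invoking Theorem \ref{kdegenerated}. Your extra attention to the trivial cases $\Delta(G)\leq 1$ is a harmless (and slightly more careful) addition, since the theorem is stated under the hypothesis $\Delta(G)\geq 2$.
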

		We decrease the upper bound of Theorem \ref{kdegenerated} to $\Delta+5$ for 3-degenerated graphs with maximum degree at least five.
		\begin{theorem}\label{3degenerated}
			Every $3$-degenerated graph $G$ with $\Delta(G)\geq5$ admits a $vi$-simultaneous $(\Delta(G)+5,3)$-coloring. Therefore,
			$\chi_{vi,3}(G)\leq\Delta(G)+5$.
		\end{theorem}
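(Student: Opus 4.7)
I would prove this by induction on $n=|V(G)|$, extending the argument of Theorem~\ref{kdegenerated}. Pick a vertex $u$ with $r:=d_G(u)\leq 3$, let $N_G(u)=\{u_1,\ldots,u_r\}$, and set $G'=G-u$. By the inductive hypothesis (with the small-$\Delta(G')$ cases handled directly), $G'$ admits a $vi$-simultaneous $(\Delta(G)+5,3)$-coloring $c'$. The goal is to extend $c'$ to $G$ by successively coloring $I_1(u)$, $I_2(u)$, and $u$.

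Two adjacency facts in $G^{\frac{3}{3}}$ are what let us shave one color off the bound $\Delta(G)+6$ of Theorem~\ref{kdegenerated}: (a) the elements of $I_2(u)$ are pairwise non-adjacent in $G^{\frac{3}{3}}$ (their $G^{\frac{1}{3}}$-distance is $4$), so colors may repeat within $I_2(u)$; and (b) $(u,u_i)$ is non-adjacent to $I_2^{G'}(u_i)$, so $c'(I_2^{G'}(u_i))$ enters only via the $(k,3)$-constraint, forcing $c((u,u_i))\in c'(I_2^{G'}(u_i))$ exactly when $|c'(I_2^{G'}(u_i))|=3$ (the ``saturated'' case). These facts let me set up Hall's theorem for $I_1(u)$ with admissible sets $A_i$ satisfying $|A_i|\geq 3$ in all cases: $A_i=c'(I_2^{G'}(u_i))$ in the saturated case (3 colors, disjoint from $c'(I_1^{G'}[u_i])$), or $A_i=[\Delta(G)+5]\setminus c'(I_1^{G'}[u_i])$ otherwise (size $\geq 5$). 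Since $|A_i|\geq 3\geq r$, Hall's condition is trivial, producing an SDR $(a_1,\ldots,a_r)$ with $a_i:=c((u,u_i))$.

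Next I would color each $(u_i,u)$. Its forbidden set is $c'(I^{G'}[u_i])\cup c(I_1(u))$, \emph{a priori} of size at most $(d_G(u_i)+3)+r=\Delta(G)+6$; but in the saturated case the SDR guarantees the overlap $a_i\in c'(I_2^{G'}(u_i))$, cutting the count down to $\Delta(G)+5$, and in the non-saturated case the $3$ above is replaced by at most $2$, giving the same bound. Either way at least one color remains. Finally, $u$ is adjacent in $G^{\frac{3}{3}}$ only to $N_G(u)\cup I_1(u)\cup I_2(u)$, whose colors span at most $r+r+3\leq 9\leq\Delta(G)+4$ distinct shades (using the $(k,3)$-constraint on $I_2(u)$), so $u$ always receives an admissible color.

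The main obstacle is that the count for $(u_i,u)$ is exactly tight in the extremal configuration $r=3$, $d_G(u_i)=\Delta(G)$ for all $i$, and all three $u_i$'s saturated with pairwise disjoint palettes $c'(I_2^{G'}(u_i))$. There the single guaranteed overlap $a_i\in c'(I_2^{G'}(u_i))$ saturates the forbidden count at exactly $\Delta(G)+5$. The resolution is to refine the SDR step, choosing the $a_i$'s so that an additional overlap $a_j\in c'(I^{G'}[u_i])$ occurs for each $i$, or, when even this fails, to locally recolor $c'$ inside one of the saturated blocks $I_2^{G'}(u_{i_0})$ so as to reduce $|c'(I_2^{G'}(u_{i_0}))|$ below $3$ and thereby unsaturate the configuration. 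Both operations are feasible because the palette $\Delta(G)+5$ exceeds the trivial lower bound $\Delta(G)+3$ by two colors, giving precisely the slack needed. A case split on the intersection pattern of the $c'(I_2^{G'}(u_i))$'s closes the argument.
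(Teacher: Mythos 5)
Your setup matches the paper's: delete a vertex $u$ of degree $r\leq 3$, take a $vi$-simultaneous $(\Delta(G)+5,3)$-coloring $c'$ of $G'=G-u$ (by minimality, with the $\Delta(G')=4$ case handled via Theorem~\ref{kdegenerated}), and extend it by coloring $I_1(u)$ through an SDR, then $I_2(u)$, then $u$. You also correctly isolate the crux: the forbidden set of $(u_i,u)$ is $c'(I^{G'}[u_i])\cup c(I_1(u))$, and it can reach the full palette size, so the representatives $a_1,\ldots,a_r$ must be chosen to overlap the colors already present around each $u_j$. But the proposal stops exactly where the real work begins. The sentence ``a case split on the intersection pattern of the $c'(I_2^{G'}(u_i))$'s closes the argument'' is not a proof: that case split \emph{is} the proof (it occupies cases (2)--(5) of the paper's argument), and it is not routine. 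Writing $C_i:=c'(I_2^{G'}(u_i))$, it rests on pigeonhole facts such as $|C_i|+|c'(I_{G'}[u_j])|=\Delta(G)+6>\Delta(G)+5$ forcing $C_i\cap c'(I_{G'}[u_j])\neq\varnothing$ when $u_j$ has full degree and is saturated, on the observation that two such sets which both miss $c'(I_{G'}[u_j])$ must coincide (the complement has only two colors), and on several sub-cases where the naturally chosen representatives collide. One must exhibit, in every configuration, distinct admissible $a_1,a_2,a_3$ such that for \emph{each} $j$ the forbidden count at $(u_j,u)$ drops to $\Delta(G)+4$; asserting that this ``is feasible because the palette exceeds the trivial lower bound by two'' does not establish it.

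Two further concrete problems. First, your count for $(u_i,u)$ is off by one: after the single guaranteed overlap you arrive at ``at most $\Delta(G)+5$'' forbidden colors and conclude that a color remains, but the palette has exactly $\Delta(G)+5$ colors, so you need $\Delta(G)+4$; and the tight situation is not confined to the all-saturated, pairwise-disjoint configuration you single out --- it already occurs when some $u_i$ has full degree with $|C_i|=2$ and no $a_j$ with $j\neq i$ happens to land in $c'(I^{G'}[u_i])$, so even your ``non-saturated'' branch needs the overlap argument. Second, the fallback of ``locally recoloring $c'$ inside one saturated block $I_2^{G'}(u_{i_0})$'' is not obviously sound: an incidence $(w,u_{i_0})\in I_2^{G'}(u_{i_0})$ also lies in the clique $I_1^{G'}[w]$ and is adjacent to $I_2^{G'}(w)$ and to $I_1^{G'}(u_{i_0})$, so recoloring it is itself a constrained extension problem about which you say nothing. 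The paper never modifies $c'$; it only chooses the new colors carefully, and that choice is the substance of the theorem.
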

		\begin{proof}{
				Assume the contrary, and let the theorem is false and $G$ be a minimal counter-example. Let $u$ be a vertex in $G$ with degree $r\leq 3$ and $N_G(u)=\{u_1,\ldots,u_r\}$ and let $G'=G-u$. If $\Delta(G')=4$, then by Theorem \ref{kdegenerated} we have $\chi_{vi,3}(G')\leq 4+6=10=\Delta(G)+5$ and if $\Delta(G')\geq 5$, according to the minimality of $G$, $\chi_{vi,3}(G')\leq \Delta(G)+5$. So there exists a $vi$-simultaneous $(\Delta(G)+5,3)$-coloring $c'$ of $G'$. We extend $c'$ to a $vi$-simultaneous $(\Delta(G)+5,3)$-coloring $c$ of $G$, which is a contradiction.\\
				Firstly, we color the vertices of $I_1(u)$. For each $(u,u_i)\in I_1(u)$ there are at least $3$ available colors if $|c'(I_2(u_i))|=3$ and there are at least $5$ available colors if $|c'(I_2(u_i))|\leq 2$. Let $A_i$ be the set of available colors for $(u,u_i)\in I_1(u)$ and $C_i=c'(I_2(u_i))$. Since we must select distinct colors for the vertices of $I_1(u)$, we prove that the family $\mathcal{F}=\{A_1,\ldots,A_r\}$ has an SDR. According to the degree of $u$ and the sizes of $C_1$, $C_2$ and $C_3$, we consider five cases:
				\begin{itemize}
					\item [(1)] $r\leq2$. Since $|A_i|\geq3$, easily one can show that $\mathcal{F}$ has an SDR $\{a_j|\ j\in [r]\}$ such that $|\{a_j\}\cup c'(I_2(u_j))|\leq 3$ for any $j\in [r]$. We color the vertex $(u,u_j)$ by $a_j$ for any $j\in [r]$. Now we color the vertices of $I_2(u)$. Since $|c'(I_{G'}[u_j]\cup c(I_1^{G}(u))|<\Delta(G)+2+r\leq \Delta(G)+4$ for each $j\in [r]$, there exists at least one available color for the vertex $(u_j,u)$. Finally, we select the color of the vertex $u$. Since $|I_G(u)\cup N_G(u)|=3r\leq 6<\Delta(G)+5$, we can color the vertex $u$ and complete the coloring of $c$.
					\item [(2)] $r=3$ and $|C_j|\leq2$ for any $j\in [3]$. Because $|\cup_{j\in J}A_j|\geq 5\geq |J|$ for any subset $J\subseteq [r]$, using Hall's Theorem (see Theorem 16.4 in \cite{bondy}), we conclude that $\mathcal{F}$ has an SDR $\{a_1,\ldots,a_r\}$ such that $|\{a_j\}\cup c'(I_2(u_j))|\leq 3$ for any $j\in [r]$. We color the vertex $(u,u_j)$ by $a_j$ for any $j\in [r]$. Now we color the vertices of $I_2(u)$. Since $|c'(I_{G'}[u_j]\cup c(I_1^{G}(u))|<\Delta(G)+2+r-1\leq \Delta(G)+4$ for each $j\in [r]$, there exists at least one available color for the vertex $(u_j,u)$. Finally, we select the color of the vertex $u$. Since $|I_G(u)\cup N_G(u)|=9<\Delta(G)+5$, we can color the vertex $u$ and complete the coloring of $c$. 	
					\item [(3)] $r=3$ and $|C_j|\leq2$ for two sets of  $C_j$s. Without loss of generality, let $|C_1|=|C_2|=2$ and $|C_3|=3$. If $C_j\cap c'(I_{G'}[u_3])$ is nonempty for some $j\in\{1,2\}$ and $a\in C_j\cap c'(I_{G'}[u_3])$, then we color the vertex $(u,u_j)$ with $a$, the vertex $(u,u_i)$ ($j\neq i\in [2]$) with color $b$ from $C_i\setminus\{a\}$ ($b\in A_i\setminus\{a\}$ if $C_i=\{a\}$) and the vertex $(u,u_3)$ with color $d$ from $C_3\setminus\{a,b\}$.\\
					Because $|c'(I_{G'}[u_3])|=\Delta(G)+3$, if $C_1\cap c'(I_{G'}[u_3])=\varnothing=C_2\cap c'(I_{G'}[u_3])$ then $C_1=C_2$. Suppose that $C_1=C_2=\{a,b\}$ and $d\in A_1\setminus\{a,b\}$ (note that $|A_1|=5$). So $d\in c'(I_{G'}[u_3])$. We color the vertex $(u,u_1)$ with $d$, the vertex $(u,u_2)$ with color $a$ and the vertex $(u,u_3)$ with color $f$ from $C_3\setminus\{a,d\}$. Now we color the vertices of $I_2(u)$. Since $|c'(I_{G'}[u_j]\cup c(I_1^{G}(u))|\leq\Delta(G)+4$ for each $j\in [r]$, there exists at least one available color for the vertex $(u_j,u)$. Finally, we select the color of the vertex $u$. Since $|I_G(u)\cup N_G(u)|=9<\Delta(G)+5$, we can color the vertex $u$ and complete the coloring of $c$.
					\item [(4)]  $r=3$ and $|C_j|\leq2$ for only one set of $C_j$s. Without loss of generality, let $|C_1|=2$ and $|C_2|=|C_3|=3$.
					If $C_1\cap c'(I_{G'}[u_j])$ is nonempty for some $j\in\{2,3\}$ and $a\in C_1\cap c'(I_{G'}[u_j])$, then we color the vertex $(u,u_1)$ with $a$. Suppose that $j\neq i\in\{2,3\}$. Since $|C_i|+|c'(I_{G'}[u_j])|=\Delta(G)+6$, $C_i\cap c'(I_{G'}[u_j])\neq\varnothing$. Let $b\in C_i\cap c'(I_{G'}[u_j])$ and color the vertex $(u,u_i)$ with color $b$ and the vertex $(u,u_j)$ with color $d$ from $C_j\setminus\{a,b\}$.\\
					Because $|c'(I_{G'}[u_2])|=|c'(I_{G'}[u_3])|=\Delta(G)+3$, if $C_1\cap c'(I_{G'}[u_2])=\varnothing=C_1\cap c'(I_{G'}[u_3])$ then $c'(I_{G'}[u_2])=c'(I_{G'}[u_3])$. Since $|C_i|+|c'(I_{G'}[u_j])|=\Delta(G)+6$, $C_i\cap c'(I_{G'}[u_j])\neq\varnothing$ when $\{i,j\}=\{2,3\}$. Therefore, there exist $b\in C_2\cap c'(I_{G'}[u_3])$ and $d\in C_3\cap c'(I_{G'}[u_2])$ such that $b\neq d$. Now we color the vertex $(u,u_1)$ with $a\in C_1$, the vertex $(u,u_2)$ with color $b$ and the vertex $(u,u_3)$ with color $d$. Now we color the vertices of $I_2(u)$. Since $|c'(I_{G'}[u_j]\cup c(I_1^{G}(u))|\leq\Delta(G)+4$ for each $j\in [r]$, there exists at least one available color for the vertex $(u_j,u)$. Finally, we select the color of the vertex $u$. Since $|I_G(u)\cup N_G(u)|=9<\Delta(G)+5$, we can color the vertex $u$ and complete the coloring of $c$. 
					\item [(5)] $r=3$ and $|C_j|=3$ for any $j\in [3]$. For any $i,j\in [3]$, since $|C_i|+|c'(I_{G'}[u_j])|=\Delta(G)+6$, $C_i\cap c'(I_{G'}[u_j])\neq\varnothing$. So there exist $a_1\in C_1\cap c'(I_{G'}[u_2])$, $a_2\in C_2\cap c'(I_{G'}[u_3])$ and $a_3\in C_3\cap c'(I_{G'}[u_1])$.
					If $|\{a_1,a_2,a_3\}|=3$, then we color the vertex $(u,u_j)$ with color $a_j$ ($j\in [3]$) and similar to the previous cases, we can complete the coloring $c$.
					Now suppose that $|\{a_1,a_2,a_3\}|=2$. Without loss of generality, suppose that $a_1=a_2\neq a_3$ and $b\in C_2\setminus\{a\}$. In this case, we color $(u,u_1)$ with $a_1$, the vertex $(u,u_2)$ with color $b$ and the vertex $(u,u_3)$ with color $a_3$.
					Finally suppose that $a_1=a_2=a_3$. If $(C_i\setminus\{a_1\})\cap c'(I_{G'}[u_j])\neq\varnothing$ for some $i,j\in [3]$ and $b\in (C_i\setminus\{a_1\})\cap c'(I_{G'}[u_j])$, we color $(u,u_i)$ with $b$, the vertex $(u,u_2)$ with color $a_1$ and the vertex $(u,u_s)$ with color $d\in C_s\setminus\{a_1,b\}$ where $i\neq s\neq j$. Otherwise, we have $(C_1\setminus\{a_1\})\cap c'(I_{G'}[u_3])=\varnothing=(C_2\setminus\{a_1\})\cap c'(I_{G'}[u_3])$ which concludes $C_1=C_2$. Suppose that $C_1=C_2=\{a_1,b,d\}$. Now we color $(u,u_1)$ with $b$, the vertex $(u,u_2)$ with color $a_1$ and the vertex $(u,u_3)$ with color $f\in C_3\setminus\{a_1,b\}$.\\
					In all of these 3 subcases, we have $|c'(I_{G'}[u_j]\cup c(I_1^{G}(u))|\leq\Delta(G)+4$ for each $j\in [3]$ and similar to the previous cases, we can complete the coloring $c$.  	
				\end{itemize}
		}\end{proof}
		\begin{problem}{\rm
				Let $G$ be a $3$-degenerated graph with $\Delta(G)=4$. We know that $\chi_{vi}(G)\leq9$. What is the sharp upper bound for $\chi_{vi,1}(G)$, $\chi_{vi,2}(G)$ and $\chi_{vi,3}(G)$? By Theorem \ref{kdegenerated},   $\chi_{vi,3}(G)\leq10$. Is this upper bound sharp or similar to Theorem \ref{3degenerated}, the upper bound is 9?
		}\end{problem}
		\section{Cycles, Complete and Bipartite Graphs}\label{sec4}
		In \cite{paper13}, it was proved that $\chi(C_k^m)=k$, when $m\geq \lfloor\frac{k}{2}\rfloor$ and otherwise, $\chi(C_k^m)=\lceil\frac{k}{\lfloor\frac{k}{m+1}\rfloor}\rceil$. With a simple review, we can prove that $\chi(G^{\frac{3}{3}})=\chi_{vi}(G)\leq 5$ when $\Delta(G)=2$ and in this case, $\chi(G^{\frac{3}{3}})=\chi_{vi}(G)=4$ if and only if any component of $G$ is a cycle of order divisible by 4 or a path. In the first theorem, we show that any cycle of order at least four is $vi$-simultaneous $(5,1)$-colorable. To avoid drawing too many edges in the figures, we use $\frac{1}{3}$-powers of graphs instead of $\frac{3}{3}$-powers of graphs. Internal vertices are shown with white color and terminal vertices are shown with color black. 
		\begin{theorem}\label{cycles}
			Let $3\leq n\in\mathbb{N}$. Then
			\[\chi_{vi,1}(C_n)=\left\{\begin{array}{lll} 6 & n=3,\\ 4 & n\equiv 0\ (mod\ 4),\\ 5 & otherwise. \end{array}\right.\]
		\end{theorem}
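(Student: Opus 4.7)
The approach is to exploit the identity $C_n^{\frac{3}{3}}=(C_{3n})^3$ and repackage a $(k,1)$-coloring of $C_n$ as a pair of sequences $(\gamma,\alpha)\in[k]^n\times[k]^n$. Label the $3n$-cycle $C_n^{\frac{1}{3}}$ by $u_0u_1\cdots u_{3n-1}$, with $u_{3(j-1)}$ the $t$-vertex $v_j$, $u_{3j-2}$ the $i$-vertex $(v_j,v_{j+1})$ and $u_{3j-1}$ the $i$-vertex $(v_{j+1},v_j)$. The two elements of $I_2(v_j)$ are $u_{3j-5}$ and $u_{3j-1}$, which the $(k,1)$-constraint forces to share a color $\alpha_j$; writing $\gamma_j=c(v_j)$, block $j$ of $C_n^{\frac{1}{3}}$ then reads $(\gamma_j,\alpha_{j+1},\alpha_j)$. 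Unpacking the proper-coloring constraints of $(C_{3n})^3$ (adjacency at distance $\le 3$ on the $3n$-cycle) shows that a $(k,1)$-coloring of $C_n$ exists if and only if one can choose $\gamma,\alpha$ so that \textbf{(A)} any three consecutive $\alpha$'s are pairwise distinct, \textbf{(B)} consecutive $\gamma$'s differ, and \textbf{(C)} $\gamma_j\notin\{\alpha_{j-1},\alpha_j,\alpha_{j+1}\}$ for every $j$.

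For the lower bounds, when $n\not\equiv 0\pmod 4$ and $n\ge 4$ the inequality $\chi_{vi,1}(C_n)\ge\chi_{vi}(C_n)=\chi((C_{3n})^3)=5$ from the formula of \cite{paper13} already gives $5$. For $n=3$, condition (A) applied to the cyclic triple $\alpha_1,\alpha_2,\alpha_3$ forces these three to be pairwise distinct; condition (C) then forces each $\gamma_j$ to avoid all three; and the three $\gamma_j$'s are themselves pairwise distinct because $v_1,v_2,v_3$ sit at pairwise distance $3$ in $C_3^{\frac{1}{3}}$, hence are pairwise adjacent in $C_3^{\frac{3}{3}}$. The six labels therefore use at least six distinct colors.

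For the upper bounds, the cases $n=3$ and $n\equiv 0\pmod 4$ admit direct prescriptions. For $n=3$ the assignment $\gamma_j=j$ and $\alpha_j=j+3$ with palette $\{1,\dots,6\}$ satisfies (A)--(C). For $n\equiv 0\pmod 4$ the $4$-periodic pattern $c(u_p)=(p\bmod 4)+1$ on $C_{3n}$ is proper on $(C_{3n})^3$ (because $4\mid 3n$), and because positions $3j-2$ and $3j+2$ differ by $4$ they automatically receive the same color, yielding a $(4,1)$-coloring. For $n\ge 5$ with $n\not\equiv 0\pmod 4$, the plan is to build a $(5,1)$-coloring by taking $\alpha$ to be a proper coloring of $C_n^2$ using few colors (at most $4$ for $n\ge 6$, exactly $5$ for $n=5$) and then selecting $\gamma_j$ from the at least two colors of $[5]\setminus\{\alpha_{j-1},\alpha_j,\alpha_{j+1}\}$ so as to respect (B). When $n$ is even this reduces to a two-color alternation for $\gamma$; when $n$ is odd three colors for $\gamma$ are needed and a careful cyclic adjustment is required.

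The main obstacle is this final case: verifying cyclic feasibility of the $(\gamma,\alpha)$ pair for all odd $n$ with $n\not\equiv 0\pmod 4$, where one cannot simply alternate $\gamma$ between two colors. I plan to dispatch the small base values $n\in\{5,7,9,11\}$ by direct exhibition, and then to establish a splice lemma producing a $(5,1)$-coloring of $C_{n+4}$ from one of $C_n$: insert a four-edge block carrying the $(p\bmod 4)+1$ prototype from the $n\equiv 0\pmod 4$ construction and permute colors to match at the boundary. This step-of-$4$ extension preserves the residue of $n\pmod 4$ and thereby covers every larger $n$ in each residue class from the finite list of base cases, completing the proof.
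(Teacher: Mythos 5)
Your reduction of a $(k,1)$-coloring of $C_n$ to a pair $(\gamma,\alpha)$ subject to (A) ($\alpha$ properly colors $C_n^2$), (B) and (C) is correct: enumerating the distance-$\le 3$ pairs in $C_{3n}$ block by block yields exactly those constraints and nothing more, and it remains valid for $n=3$ under cyclic indexing. Your lower bounds agree in substance with the paper's (the $n=3$ bound is the paper's argument rephrased), and your upper bounds for $n=3$ and $n\equiv 0\pmod 4$ are complete; the $4$-periodic pattern is a nice direct alternative to the paper's route, which instead quotes $\chi(C_{3n}^3)=4$ from \cite{paper13} and invokes Lemma \ref{firstlem} to force the $(4,1)$ property. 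Where you genuinely diverge is the residual case $4\nmid n$, $n\ge 5$: the paper always starts from the $4$-periodic $(4,1)$-coloring of $C_{4q}$ and applies a one-vertex insertion gadget (recoloring three cells with the fifth color) once, twice or three times, anchored by explicit figures for $C_6$ and $C_7$; you instead split by parity, handling even $n$ by list-coloring and odd $n$ by base cases plus a step-of-four splice. Your even case is sound, but note that when $\chi(C_n^2)=4$ the lists $[5]\setminus\{\alpha_{j-1},\alpha_j,\alpha_{j+1}\}$ vary with $j$, so "two-color alternation" is not literally available; what you are really using is that even cycles are $2$-choosable, and you should say so (or prove it).

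The odd case is where your write-up currently has a real gap: neither the base colorings for $n\in\{5,7\}$ (these two suffice, since your splice preserves the residue mod $4$ and the even residues are covered separately) nor the splice lemma is actually established. The splice does work, but not automatically: after inserting four blocks between positions $j$ and $j+1$, the new $\gamma'_1,\dots,\gamma'_4$ must be chosen from lists of size $2$ on a path whose \emph{both} endpoints $\gamma_j,\gamma_{j+1}$ are already colored, and a path with size-$2$ lists and two precolored ends need not be colorable in general. You must therefore exploit the freedom in the inserted $\alpha$-values; for instance, setting $\alpha'_1=\alpha_{j+1}$ and $\alpha'_4=\alpha_j$ makes the old constraints on $\gamma_j$ and $\gamma_{j+1}$ carry over verbatim, and then $\alpha'_2,\alpha'_3$ can be picked from the three remaining colors so that the four internal lists admit a proper completion. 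Until that verification and the two base colorings are written out, the theorem is not proved for $n\equiv 1,3\pmod 4$; with them supplied, your argument is a valid and somewhat more systematic alternative to the paper's surgery.
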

		\begin{figure}[h]
			\begin{center}
				\begin{tikzpicture}[scale=1.0]
					\tikzset{vertex/.style = {shape=circle,draw, line width=1pt, opacity=1.0, inner sep=2pt}}
					\tikzset{vertex1/.style = {shape=circle,draw, fill=black, line width=1pt,opacity=1.0, inner sep=2pt}}
					\tikzset{arc/.style = {->,> = latex', line width=1pt,opacity=1.0}}
					\tikzset{edge/.style = {-,> = latex', line width=1pt,opacity=1.0}}
					\node[vertex1] (a) at (0,0) {};
					\node  at (0,-0.4) {$1$};
					\node[vertex] (b) at (1,0) {};
					\node  at (1,-0.4) {$2$};
					\node[vertex] (c) at  (2,0) {};
					\node  at (2,-0.4) {$3$};
					\node[vertex1] (d) at  (3,0) {};
					\node  at (3,-0.4) {$4$};
					\node[vertex] (e) at  (2.5,0.85) {};
					\node  at (3,0.85) {$5$};
					\node[vertex] (f) at  (2,1.7) {};
					\node  at (2.5,1.7) {$2$};
					\node[vertex1] (g) at (1.5,2.55) {};
					\node  at (1.9,2.55) {$6$};
					\node[vertex] (h) at (1,1.7) {};
					\node  at (0.6,1.7) {$3$};
					\node[vertex] (i) at (0.5,0.85) {};
					\node  at (0.1,0.85) {$5$};
					\draw[edge] (a)  to (b);
					\draw[edge] (b)  to (c);
					\draw[edge] (c)  to (d);
					\draw[edge] (d)  to (e);
					\draw[edge] (e)  to (f);
					\draw[edge] (f)  to (g);
					\draw[edge] (g)  to (h);
					\draw[edge] (h)  to (i);
					\draw[edge] (i)  to (a);
				\end{tikzpicture}
				\caption{$vi$-simultaneous proper $(6,1)$-coloring of $C_3$. Black vertices are corresponding to the vertices of $G$ and white vertices are corresponding to the incidences of $C_3$.}
				\label{C3}
			\end{center}
		\end{figure}
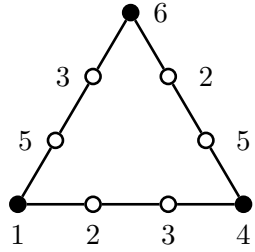
		\begin{proof}{
				Suppose that $V(C_n)=\{v_1,v_2,\ldots,v_n\}$ and $c$ is a $vi$-simultaneous $(k,1)$-coloring of $C_3$. We have $c(v_i)\neq c((v_i,v_j))=c((v_l,v_j))$ where $\{i,j,l\}=[3]$. So
				\[|\{c(v_1),c(v_2),c(v_3), c((v_1,v_2)),c((v_2,v_1)),c((v_1,v_3))\}|=6.\]
				Therefore, $k\geq6$. Figure \ref{C3} shows a $vi$-simultaneous $(6,1)$-coloring of $C_3$ and so $\chi_{vi,1}(C_3)=6$. In the second part, $\chi_{vi}(C_n)=\chi(C_n^{\frac{3}{3}})=\chi(C_{3n}^3)=\lceil\frac{3n}{\lfloor\frac{3n}{4}\rfloor}\rceil=4=\Delta(C_n)+2$ and hence Lemma \ref{firstlem} shows that any $vi$-simultaneous $4$-coloring of $C_n$ is a $vi$-simultaneous $(4,1)$-coloring.\\
				For the last part, we consider three cases:\\
				(i) $n=4q+1$, $q\in\mathbb{N}$. Suppose that $c$ is a $vi$-simultaneous $(4,1)$-coloring of $C_{n-1}$ and
				\[(c(v_1),c((v_1,v_{n-1})), c((v_{n-1},v_1)), c(v_{n-1}))=(1,4,3,2).\]
				In this coloring, the colors of the other vertices uniquely determined. To find a $vi$-simultaneous $(5,1)$-coloring of $C_{n}$, we replace the edge $\{v_1,v_{n-1}\}$ with the path $P=v_{n-1}v_{n}v_1$. Now we define the coloring $c'$ as follows (See Figure \ref{4q+1}):
				\[c'(x)=\left\{\begin{array}{lllll} 2 & x=v_n,\\ 3 & x\in \{v_{n-1}, (v_n,v_1)\},\\ 4 & x=(v_n,v_{n-1}),\\ 5 & x\in\{v_{n-2},(v_1,v_n), (v_{n-1},v_n\},\\ c(x) & otherwise. \end{array}\right.\]
				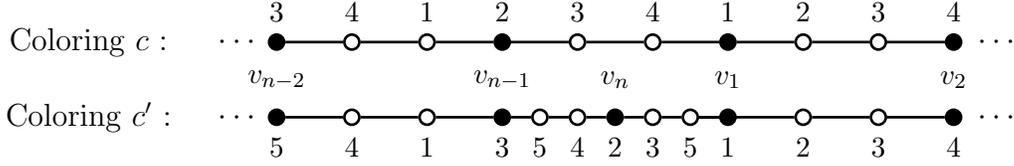
\begin{figure}[h]
					\begin{center}
						\begin{tikzpicture}[scale=1.0]
							\tikzset{vertex/.style = {shape=circle,draw, line width=1pt, opacity=1.0, inner sep=2pt}}
							\tikzset{vertex1/.style = {shape=circle,draw, fill=black, line width=1pt,opacity=1.0, inner sep=2pt}}
							\tikzset{edge/.style = {-,> = latex', line width=1pt,opacity=1.0}}
							\node[vertex1] (a) at (0,0) {};
							\node  at (0,0.4) {$3$};
							\node  at (0,-0.5) {$v_{n-2}$};
							\node[vertex] (b) at (1,0) {};
							\node  at (1,0.4) {$4$};
							\node[vertex] (c) at  (2,0) {};
							\node  at (2,0.4) {$1$};
							\node[vertex1] (d) at  (3,0) {};
							\node  at (3,0.4) {$2$};
							\node  at (3,-0.5) {$v_{n-1}$};
							\node[vertex] (e) at  (4,0) {};
							\node  at (4, 0.4) {$3$};
							\node[vertex] (f) at  (5,0) {};
							\node  at (5,0.4) {$4$};
							\node[vertex1] (g) at (6,0) {};
							\node  at (6,0.4) {$1$};
							\node  at (6,-0.5) {$v_{1}$};
							\node[vertex] (h) at (7,0) {};
							\node  at (7,0.4) {$2$};
							\node[vertex] (i) at (8,0) {};
							\node  at (8,0.4) {$3$};
							\node[vertex1] (j) at (9,0) {};
							\node  at (9,0.4) {$4$};
							\node  at (9,-0.5) {$v_{2}$};
							\node  at (4.5,-0.5) {$v_{n}$};
							\node  at (-0.5,0) {{\large $\cdots$}};
							\node  at (-2.5,0) {{\large Coloring $c$ :}};
							\node  at (9.6,0) {{\large $\cdots$}};
							\node  at (-2.5,-1) {{\large Coloring $c'$ :}};
							\draw[edge] (a)  to (b);
							\draw[edge] (b)  to (c);
							\draw[edge] (c)  to (d);
							\draw[edge] (d)  to (e);
							\draw[edge] (e)  to (f);
							\draw[edge] (f)  to (g);
							\draw[edge] (g)  to (h);
							\draw[edge] (h)  to (i);
							\draw[edge] (i)  to (j);
							\node[vertex1] (a1) at (0,-1) {};
							\node  at (0,-1.4) {$5$};
							\node[vertex] (b1) at (1,-1) {};
							\node  at (1,-1.4) {$4$};
							\node[vertex] (c1) at  (2,-1) {};
							\node  at (2,-1.4) {$1$};
							\node[vertex1] (d1) at  (3,-1) {};
							\node  at (3,-1.4) {$3$};
							\node[vertex] (e1) at  (3.5,-1) {};
							\node  at (3.5, -1.4) {$5$};
							\node[vertex] (f1) at  (4,-1) {};
							\node  at (4,-1.4) {$4$};
							\node[vertex1] (g1) at (4.5,-1) {};
							\node  at (4.5,-1.4) {$2$};
							\node[vertex] (h1) at (5,-1) {};
							\node  at (5,-1.4) {$3$};
							\node[vertex] (i1) at (5.5,-1) {};
							\node  at (5.5,-1.4) {$5$};
							\node[vertex1] (j1) at (6,-1) {};
							\node  at (6,-1.4) {$1$};
							\node[vertex] (k1) at (7,-1) {};
							\node  at (7,-1.4) {$2$};
							\node[vertex] (l1) at (8,-1) {};
							\node  at (8,-1.4) {$3$};
							\node[vertex1] (m1) at (9,-1) {};
							\node  at (9,-1.4) {$4$};
							\node  at (-0.5,-1) {{\large $\cdots$}};
							\node  at (9.6,-1) {{\large $\cdots$}};
							\draw[edge] (a1)  to (b1);
							\draw[edge] (b1)  to (c1);
							\draw[edge] (c1)  to (d1);
							\draw[edge] (d1)  to (e1);
							\draw[edge] (e1)  to (f1);
							\draw[edge] (f1)  to (g1);
							\draw[edge] (g1)  to (h1);
							\draw[edge] (h1)  to (i1);
							\draw[edge] (i1)  to (j1);
							\draw[edge] (i1)  to (k1);
							\draw[edge] (k1)  to (l1);
							\draw[edge] (l1)  to (m1);
						\end{tikzpicture}
						\caption{Extension $vi$-simultaneous $(4,1)$-coloring $c$ to a $vi$-simultaneous $(5,1)$-coloring $c'$.}
						\label{4q+1}
					\end{center}
				\end{figure}
				(ii) $n=4q+2$, $q\in\mathbb{N}$ and $q\in\mathbb{N}$. Figure \ref{C6} shows a $vi$-simultaneous $(5,1)$-coloring of $C_6$. Now suppose that $n\geq 10$. Easily we can use the method of case (i) on two edges $e_1=\{v_{1},v_2\}$ and $e_2=\{v_4,v_5\}$ of $C_{n-2}$ to achieve a $vi$-simultaneous $(5,1)$-coloring of $C_n$.\\
				(iii) $n=4q+3$, $q\in\mathbb{N}$. Figure \ref{C6} shows a $vi$-simultaneous $(5,1)$-coloring of $C_7$. Now suppose that $n\geq 11$. Again we use the method of case (i) on three edges $e_1=\{v_1,v_2\}$ (with change the color of $v_{3}$ to $5$ instead of vertex $v_{n-3}$), $e_2=\{v_4,v_5\}$ and $e_3=\{v_7,v_8\}$ of $C_{n-3}$ to achieve a $vi$-simultaneous $(5,1)$-coloring of $C_n$.
				\begin{figure}[h]
					\begin{center}
						\begin{tikzpicture}[scale=1.0]
							\tikzset{vertex/.style = {shape=circle,draw, line width=1pt, opacity=1.0, inner sep=2pt}}
							\tikzset{vertex1/.style = {shape=circle,draw, fill=black, line width=1pt,opacity=1.0, inner sep=2pt}}
							\tikzset{edge/.style = {-,> = latex', line width=1pt,opacity=1.0}}
							\node[vertex1] (a) at (0,0) {};
							\node  at (0,-0.4) {$1$};
							\node[vertex] (a1) at (1,0) {};
							\node  at (1,-0.4) {$3$};
							\node[vertex] (a2) at  (2,0) {};
							\node  at (2,-0.4) {$4$};
							\node[vertex1] (b) at  (3,0) {};
							\node  at (3,-0.4) {$2$};
							\node[vertex] (b1) at  (4,0) {};
							\node  at (4,-0.4) {$5$};
							\node[vertex] (b2) at  (5,0) {};
							\node  at (5,-0.4) {$3$};
							\node[vertex1] (c) at (6,0) {};
							\node  at (6,-0.4) {$1$};
							\node[vertex] (c1) at (7,0) {};
							\node  at (7,-0.4) {$4$};
							\node[vertex] (c2) at (8,0) {};
							\node  at (8,-0.4) {$5$};
							\node[vertex1] (d) at (8,1) {};
							\node  at (8,1.4) {$2$};
							\node[vertex] (d1) at (7,1) {};
							\node  at (7,1.4) {$3$};
							\node[vertex] (d2) at (6,1) {};
							\node  at (6,1.4) {$4$};
							\node[vertex1] (e) at (5,1) {};
							\node  at (5,1.4) {$1$};
							\node[vertex] (e1) at (4,1) {};
							\node  at (4,1.4) {$5$};
							\node[vertex] (e2) at (3,1) {};
							\node  at (3,1.4) {$3$};
							\node[vertex1] (f) at (2,1) {};
							\node  at (2,1.4) {$2$};
							\node[vertex] (f1) at (1,1) {};
							\node  at (1,1.4) {$4$};
							\node[vertex] (f2) at (0,1) {};
							\node  at (0,1.4) {$5$};
							\draw[edge] (a)  to (a1);
							\draw[edge] (a1)  to (a2);
							\draw[edge] (a2)  to (b);
							\draw[edge] (b)  to (b1);
							\draw[edge] (b1)  to (b2);
							\draw[edge] (b2)  to (c);
							\draw[edge] (c)  to (c1);
							\draw[edge] (c1)  to (c2);
							\draw[edge] (c2)  to (d);
							\draw[edge] (d)  to (d1);
							\draw[edge] (d1)  to (d2);
							\draw[edge] (d2)  to (e);
							\draw[edge] (e)  to (e1);
							\draw[edge] (e1)  to (e2);
							\draw[edge] (e2)  to (f);
							\draw[edge] (f)  to (f1);
							\draw[edge] (f1)  to (f2);
							\draw[edge] (f2)  to (a);
							\node[vertex1] (a) at (0,2) {};
							\node  at (0,2.4) {$5$};
							\node[vertex] (a1) at (1,2) {};
							\node  at (1,2.4) {$1$};
							\node[vertex] (a2) at  (2,2) {};
							\node  at (2,2.4) {$3$};
							\node[vertex1] (b) at  (3,2) {};
							\node  at (3,2.4) {$4$};
							\node[vertex] (b1) at  (4,2) {};
							\node  at (4,2.4) {$2$};
							\node[vertex] (b2) at  (5,2) {};
							\node  at (5,2.4) {$1$};
							\node[vertex1] (c) at (6,2) {};
							\node  at (6,2.4) {$5$};
							\node[vertex] (c1) at (7,2) {};
							\node  at (7,2.4) {$3$};
							\node[vertex] (c2) at (8,2) {};
							\node  at (8,2.4) {$2$};
							\node[vertex1] (x) at (9,2) {};
							\node  at (9,1.6) {$1$};
							\node[vertex] (x1) at (9,3) {};
							\node  at (9,3.4) {$4$};
							\node[vertex] (x2) at (8,3) {};
							\node  at (8,3.4) {$3$};
							\node[vertex1] (d) at (7,3) {};
							\node  at (7,3.4) {$2$};
							\node[vertex] (d1) at (6,3) {};
							\node  at (6,3.4) {$5$};
							\node[vertex] (d2) at (5,3) {};
							\node  at (5,3.4) {$4$};
							\node[vertex1] (e) at (4,3) {};
							\node  at (4,3.4) {$3$};
							\node[vertex] (e1) at (3,3) {};
							\node  at (3,3.4) {$2$};
							\node[vertex] (e2) at (2,3) {};
							\node  at (2,3.4) {$5$};
							\node[vertex1] (f) at (1,3) {};
							\node  at (1,3.4) {$4$};
							\node[vertex] (f1) at (0,3) {};
							\node  at (0,3.4) {$3$};
							\node[vertex] (f2) at (-1,2.5) {};
							\node  at (-1,2.1) {$2$};
							\draw[edge] (a)  to (a1);
							\draw[edge] (a1)  to (a2);
							\draw[edge] (a2)  to (b);
							\draw[edge] (b)  to (b1);
							\draw[edge] (b1)  to (b2);
							\draw[edge] (b2)  to (c);
							\draw[edge] (c)  to (c1);
							\draw[edge] (c1)  to (c2);
							\draw[edge] (c2)  to (x);
							\draw[edge] (x)  to (x1);
							\draw[edge] (x1)  to (x2);
							\draw[edge] (x2)  to (d);
							\draw[edge] (d)  to (d1);
							\draw[edge] (d1)  to (d2);
							\draw[edge] (d2)  to (e);
							\draw[edge] (e)  to (e1);
							\draw[edge] (e1)  to (e2);
							\draw[edge] (e2)  to (f);
							\draw[edge] (f)  to (f1);
							\draw[edge] (f1)  to (f2);
							\draw[edge] (f2)  to (a);
						\end{tikzpicture}
						\caption{$vi$-simultaneous $(5,1)$-coloring $C_6$ and $C_7$.}
						\label{C6}
					\end{center}
				\end{figure}
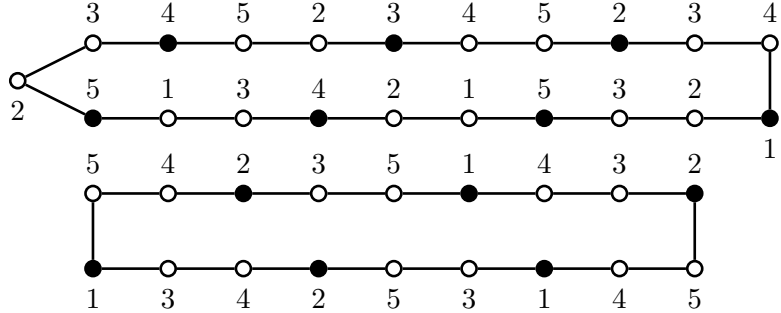
		}\end{proof}
		\begin{corollary}
			Let $G$ be a nonempty graph with $\Delta(G)\leq2$. Then $\chi_{vi,1}(G)=4$ if and only if each component of $G$ is a cycle of order divisible by 4 or a path.
		\end{corollary}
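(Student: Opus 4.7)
The plan is to deduce the corollary directly from Theorem~\ref{cycles} and Corollary~\ref{tree}, exploiting the fact that the components of $G^{\frac{3}{3}}$ correspond to the components of $G$, so each component can be colored independently without interference.

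For the ``only if'' direction I will argue by contrapositive. Since $\Delta(G)\leq 2$, every component of $G$ is an isolated vertex, a path, or a cycle. Suppose some component $H$ is a cycle $C_n$ with $n\not\equiv 0\pmod 4$. Then $H^{\frac{3}{3}}$ is a union of connected components of $G^{\frac{3}{3}}$, so any $vi$-simultaneous $(4,1)$-coloring of $G$ restricts to a $vi$-simultaneous $(4,1)$-coloring of $H$. By Theorem~\ref{cycles}, however, $\chi_{vi,1}(C_n)\in\{5,6\}\geq 5$, contradicting $\chi_{vi,1}(G)=4$. Hence every component must be a path or a cycle whose order is divisible by $4$.

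For the ``if'' direction, assume every component of $G$ is a path or a cycle of order divisible by $4$. First I establish the lower bound $\chi_{vi,1}(G)\geq 4$. Since $G$ is nonempty, $\Delta(G)\geq 1$. If $\Delta(G)=2$ then $\omega(G^{\frac{3}{3}})=\Delta(G)+2=4$, giving $\chi_{vi,1}(G)\geq\chi_{vi}(G)\geq 4$; if $\Delta(G)=1$ then $K_2$ occurs as a component, and Corollary~\ref{tree} yields $\chi_{vi,1}(K_2)=4$, so again $\chi_{vi,1}(G)\geq 4$. For the matching upper bound, Theorem~\ref{cycles} supplies a $vi$-simultaneous $(4,1)$-coloring of each $C_{4q}$ component using colors in $[4]$, and Corollary~\ref{tree} (together with the corollary $\chi_{vi,1}(P_n)=4$) does the same for every path component. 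Because no two elements from different components of $G$ are adjacent or incident in $G^{\frac{3}{3}}$, gluing these colorings together gives a global $vi$-simultaneous $(4,1)$-coloring of $G$, so $\chi_{vi,1}(G)\leq 4$.

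The only real delicacy is confirming that $(4,1)$-colorings restrict to and combine across components (immediate from the disjointness of $V(G)\cup I(G)$ across components in $G^{\frac{3}{3}}$) and handling the boundary case $\Delta(G)=1$; otherwise the statement is a direct corollary of Theorem~\ref{cycles} and Corollary~\ref{tree}, so I expect no serious obstacle.
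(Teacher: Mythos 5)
Your proposal is correct and follows the route the paper intends: the corollary is stated without proof as an immediate consequence of Theorem~\ref{cycles} (and Corollary~\ref{tree} for path components), using exactly the componentwise restriction/gluing argument you spell out, together with the lower bound $\chi_{vi,1}(G)\geq\omega(G^{\frac{3}{3}})=4$ for nonempty $G$ with $\Delta(G)\leq 2$. Your explicit handling of the $\Delta(G)=1$ case and of the fact that the $(k,1)$ condition is preserved under disjoint union is a reasonable filling-in of details the paper leaves implicit.
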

		The following lemma is about the underlying digraph of any subgraph of $\frac{3}{3}$-power of a graph induces by an independence set. We leave the proof to the reader.
		\begin{lemma}\label{stardiforest}
			Let $G$ be a graph and $S$ be an independent set of $G^{\frac{3}{3}}$. Then each component of $D(G^{\frac{3}{3}}[S])$ is trivial or star whose arcs are directed towards the center. In addition the vertices of trivial components form an independent set in $G$.
		\end{lemma}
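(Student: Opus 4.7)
The plan is to translate the independence of $S$ in $G^{\frac{3}{3}}$ into a short list of forbidden local configurations of arcs in $A(S_i)$, and then read off the structure of $D(G^{\frac{3}{3}}[S])$ from that list.

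First I would dispose of the elements of $S_t$. A terminal $u$ is adjacent in $G^{\frac{3}{3}}$ to an internal vertex $(vw)_1$ exactly when $u\in\{v,w\}$, since otherwise any path in $G^{\frac{1}{3}}$ from $u$ to $(vw)_1$ must pass through $v$ or $w$ and therefore has length at least $4$. Hence if $u\in S_t$ then no arc of $A(S_i)$ has $u$ as a tail or head, so $u\notin V(S_i)$ and $u$ is isolated in $D(G^{\frac{3}{3}}[S])$. Conversely, every $v\in V(S_i)$ is an endpoint of some arc of $A(S_i)$, so the trivial components of $D(G^{\frac{3}{3}}[S])$ are precisely the elements of $S_t$. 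Since two vertices of $G$ joined by an edge lie at distance $3$ in $G^{\frac{1}{3}}$ and are therefore adjacent in $G^{\frac{3}{3}}$, independence of $S$ forces $S_t$ to be independent in $G$, settling the second assertion.

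Next I would extract three forbidden arc patterns from the three adjacency rules of $\mathcal{I}(G)$ recalled in the introduction: two arcs sharing a tail $(u,v),(u,w)$ are forbidden by rule (i); two opposite arcs $(u,v),(v,u)$ by rule (ii); and two consecutive arcs $(u,v),(v,w)$ with $u\neq w$ by rule (iii), since the set of tails $\{u,v\}$ equals the edge of the first incidence. The first forbidden pattern gives out-degree at most $1$ at every vertex, and the other two together forbid any vertex from simultaneously carrying an in-arc and an out-arc.

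With these constraints in hand the component structure is forced. In any non-trivial component I will call a vertex a leaf if it has an out-arc and a center otherwise: each leaf has out-degree exactly $1$ and, by the previous step, in-degree $0$, while each center has out-degree $0$ and positive in-degree. In the underlying undirected graph each leaf therefore has degree $1$ and is attached only to its unique target, which must be a center; and any two distinct centers in the same component would have to be joined by a path that traverses leaves of degree $1$, which is impossible. Hence each non-trivial component is a star whose arcs all point to its unique center, as required. The main obstacle, such as it is, lies in the preceding step — carefully matching each of the three adjacency rules of $\mathcal{I}(G)$ to exactly the right arc configuration — but once that dictionary is in place the rest is just bookkeeping.
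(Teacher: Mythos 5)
Your proof is correct and complete; the paper itself leaves this lemma to the reader, and your argument (translating independence in $G^{\frac{3}{3}}$ into the three forbidden arc patterns --- common tail, opposite, consecutive --- and then deducing that every vertex of a nontrivial component is either a leaf with one out-arc or a sink, forcing a star oriented toward its unique center) is exactly the intended one. The handling of $S_t$, including the observation that $S_t\cap V(S_i)=\varnothing$ so the trivial components are precisely the terminal vertices of $S$, which must be independent in $G$, is also right.
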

		\begin{theorem}\label{complete}
			$\chi_{vi}(K_n)=n+2$ for each $n\in\mathbb{N}\setminus\{1\}$.
		\end{theorem}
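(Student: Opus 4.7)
The plan is to prove matching lower and upper bounds of $n+2$.

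\emph{Lower bound.} For $n=2$, $K_2^{\frac{3}{3}}$ is isomorphic to $K_4$, so $\chi_{vi}(K_2)=4$. For $n\geq 3$, suppose for contradiction that $c$ is a proper $(n+1)$-coloring of $K_n^{\frac{3}{3}}$. Since $K_n$ is $(n-1)$-regular and $n+1=\Delta(K_n)+2$, Lemma~\ref{firstlem} forces $|c(I_2(v))|\leq 1$ at every vertex $v$; denote the unique color by $f(v)$. Because $I_1[v]$ induces an $n$-clique in $K_n^{\frac{3}{3}}$ whose colors must avoid $f(v)$, we get $c(I_1[v])=[n+1]\setminus\{f(v)\}$, and so every incidence $(v_i,v_j)\in I_1(v_i)\cap I_2(v_j)$ is forced to the color $f(v_j)$. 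Distinctness of the first incidences at $v_i$ then makes $f$ injective on $V\setminus\{v_i\}$; varying $i$ and using $n\geq 3$ makes $f$ injective on $V$. Finally $c(v_i)$ must differ from every $c((v_i,v_j))=f(v_j)$, so $c(v_i)\in[n+1]\setminus f(V)$, a singleton, contradicting distinctness of vertex colors.

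\emph{Upper bound, $n$ even.} Write $n=2m$, fix a fixed-point-free pairing involution $\sigma$ of $V(K_n)$, and designate one vertex of each pair as an $\alpha$-user and the other as a $\beta$-user, where $\alpha=n+1$ and $\beta=n+2$. Set $c(v_i)=i$ and define $c((u,v))=\beta$ if $v=\sigma(u)$ and $u$ is an $\alpha$-user, $c((u,v))=\alpha$ if $v=\sigma(u)$ and $u$ is a $\beta$-user, and $c((u,v))=c(\sigma(v))$ otherwise. A case analysis on the types of $u$ and $v$ (partners, same type, opposite type) shows that at every vertex $v$ the first incidences take the $n-1$ distinct colors of $[n+2]\setminus\{c(v),\epsilon_v,c(\sigma(v))\}$ while $c(I_2(v))=\{\epsilon_v,c(\sigma(v))\}$, where $\epsilon_v\in\{\alpha,\beta\}$ is the extra matching $v$'s type. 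The three blocks $\{c(v)\}$, $c(I_1(v))$, $c(I_2(v))$ partition $[n+2]$ at each $v$, which at once gives every adjacency condition: same first vertex, same edge, consecutive-forward, consecutive-backward, and vertex-incidence.

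\emph{Upper bound, $n$ odd.} Since $n-1$ is even, apply the even construction to $K_{n-1}$ to obtain a valid $(n+1)$-coloring, with involution $\sigma$ and extras $\alpha,\beta$. Introduce a fresh color $\gamma=n+2$ and append $v_n$ by setting $c(v_n):=\alpha$, $c((v_n,v_i)):=c(\sigma(v_i))$ for every $i<n$, and $c((v_i,v_n)):=\gamma$ for every $i<n$. Because $\sigma$ is a bijection of $V(K_{n-1})$, the values $c(\sigma(v_i))$ are the $n-1$ distinct vertex colors of $K_{n-1}$, so the first incidences at $v_n$ are pairwise distinct; each $c(\sigma(v_i))$ already belonged to $c(I_2^{K_{n-1}}(v_i))$, so $|c(I_2(v_i))|$ stays at most $2$; and $\gamma$, absent from the $K_{n-1}$-palette, conflicts with no existing color and is legal as the sole color of $c(I_2(v_n))$. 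The principal technical burden is the case analysis in the even construction, where the rule $c((u,v))=c(\sigma(v))$ must be checked against the partner edges, vertex colors, and consecutive incidences at both endpoints; once the even case is settled, the odd extension follows almost automatically because of the freshness of $\gamma$.
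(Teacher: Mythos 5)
Your proof is correct, but it takes a genuinely different route from the paper on both bounds. For the lower bound the paper counts: $|V(K_n^{\frac{3}{3}})|=n^2$, and Lemma \ref{stardiforest} shows every independent set of $K_n^{\frac{3}{3}}$ has at most $n-1$ vertices, whence $\chi\geq n^2/(n-1)>n+1$. You instead run a forcing argument from Lemma \ref{firstlem}: in a hypothetical $(n+1)$-coloring every $I_2(v)$ is monochromatic with color $f(v)$, every $(v_i,v_j)$ is forced to $f(v_j)$, $f$ becomes injective, and each $c(v_i)$ is pinned to the single color outside $f(V)$ (note the step $c(v_i)\neq f(v_i)$ is what you need in addition to $c(v_i)\neq f(v_j)$ for $j\neq i$, and it does follow from your observation that $c(I_1[v_i])=[n+1]\setminus\{f(v_i)\}$). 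Both are valid; the paper's is shorter, yours reveals more structure of near-optimal colorings. For the upper bound the paper uses a Hamiltonian cycle $v_1,\dots,v_n$: color $j$ goes to $v_j$ and to all second incidences of $v_{j+1}$ other than $(v_j,v_{j+1})$, and the leftover incidences $(v_j,v_{j+1})$ alternate between $n+1$ and $n+2$, with a small recoloring repair when $n$ is odd. You replace the cyclic shift by a fixed-point-free involution $\sigma$ (a perfect matching), confine the two extra colors to matched pairs, and handle odd $n$ by adjoining a vertex whose second incidences all receive a fresh color; the freshness makes the odd extension essentially automatic, which is arguably cleaner than the paper's parity repair. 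Both constructions in fact give $vi$-simultaneous $(n+2,2)$-colorings, consistent with the remark after the theorem that $\chi_{vi,2}(K_n)=n+2$. I verified your even-case block decomposition ($c(I_1(v))=[n+2]\setminus\{c(v),\epsilon_v,c(\sigma(v))\}$ and $c(I_2(v))=\{\epsilon_v,c(\sigma(v))\}$) and the $n=3$ base of the odd case; everything checks out, including the harmless coincidence that all $(v_i,v_n)$ share the color $\gamma$, since second incidences with a common head are not adjacent in $K_n^{\frac{3}{3}}$.
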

		\begin{proof}{
				Let $G=K_n^{\frac{3}{3}}$, $c:V(G)\rightarrow [\chi(G)]$ be a proper coloring and $C_j=c^{-1}(j)$ ($1\leq j\leq\chi(G)$). Lemma \ref{stardiforest} concludes that each color class $C_j$ has at most $n-1$ vertices. So
				\[\chi(G)\geq\frac{|V(G)|}{n-1}=\frac{n^2}{n-1}=n+1+\frac{1}{n-1}.\]
				Therefore, $\chi(G)\geq n+2$. Now we define a proper $(n+2)$-coloring of $G$.\\	 
				When $n=2$, $\chi(G)=\chi(K_4)=4$. Now we consider $n\geq 3$. Consider the hamiltonian cycle of $K_n$, named $C=(v_1,v_2,\ldots,v_n)$. For $1\leq j\leq n$,  assign color $j$ to the $t$-vertex $v_j$ and all $i$-vertices $(v_k,v_{j+1})$, where $k\in [n]\setminus\{j,j+1\}$ and $v_{n+1}=v_1$. It can be easily seen that, all $t$-vertices of $G$ have a color in $[n]$ and the only uncolored vertices of $G$ are $(v_j,v_{j+1})$, for $1\leq j\leq n$. Now, it is enough to color the mentioned $i$-vertices. Suppose that $n$ is even. Assign color $n+1$  to the $i$-vertex $(v_j,v_{j+1})$, if $j$ is an odd number, otherwise color it with the color $n+2$. Now suppose that $n$ is an odd integer. Then for $1\leq j\leq n-1$, color the $i$-vertex $(v_j,v_{j+1})$ with color $n+1$, if $j$ is odd and otherwise assign color $n+2$ to it. Also, color the $i$-vertex $(v_n,v_1)$ with color $n$ and recolor the $t$-vertex  $v_n$ with color $n+1$.
		}\end{proof}
		Suppose that $c$ is a $vi$-simultaneous $(n+2)$-coloring of $K_n$. For any vertex $v$, $|c(I_1[v])|=n$ and so $c(I_2(v))|=2$. Therefore $\chi_{vi,2}(K_n)=\chi_{vi}(K_n)=n+2$. In the following theorem, we determine $\chi_{vi,1}(K_n)$.
		\begin{theorem}\label{(vi,1)Kn}
			Let $n\in\mathbb{N}\setminus\{1\}$ and $G$ be a graph of order $n$. Then $\chi_{vi,1}(G)=2n$ if and only if $G\cong K_n$.
		\end{theorem}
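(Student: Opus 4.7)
The plan is to route the problem through Theorem \ref{start2}, which gives $\chi_{vi,1}(G)=\chi(\mathcal{T}_{vi,1}(G))$, and to exploit the trivial bound $\chi(\mathcal{T}_{vi,1}(G))\le |V(\mathcal{T}_{vi,1}(G))|=2n$. Under this setup, the equality $\chi_{vi,1}(G)=2n$ simply means that $\mathcal{T}_{vi,1}(G)$ attains a chromatic number equal to its order, so the problem becomes purely structural.

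First I would record the elementary fact that for any graph $H$ on $m$ vertices, $\chi(H)=m$ holds if and only if $H\cong K_m$: a proper $m$-coloring of an $m$-vertex graph must place each vertex in its own color class, forcing every pair of vertices to be adjacent. Applied to $H=\mathcal{T}_{vi,1}(G)$ with $m=2n$, this reduces the statement to the structural question of when $\mathcal{T}_{vi,1}(G)\cong K_{2n}$.

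Next I would unwind Definition \ref{Tvi1}. The subgraph of $\mathcal{T}_{vi,1}(G)$ induced on $V(G)\times\{1\}$ is isomorphic to $G$ itself, since $(u,1)(v,1)$ is declared an edge exactly when $d_G(u,v)=1$. Consequently, $\mathcal{T}_{vi,1}(G)\cong K_{2n}$ forces $G\cong K_n$, giving the only-if direction. For the converse, suppose $G=K_n$: every pair of distinct vertices $u,v\in V(G)$ satisfies $d_G(u,v)=1$, while $d_G(v,v)=0$, so all three clauses of Definition \ref{Tvi1} place an edge between every pair of distinct vertices of $V(G)\times[2]$, producing $\mathcal{T}_{vi,1}(K_n)\cong K_{2n}$ and hence $\chi_{vi,1}(K_n)=2n$.

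The whole argument is a short chain of equivalences once Theorem \ref{start2} is invoked, and I do not anticipate any real obstacle; the only mild point is isolating the elementary observation $\chi(H)=|V(H)|\Leftrightarrow H\cong K_{|V(H)|}$, which is what drives the reduction. As a sanity check, this explanation is consistent with Theorem \ref{complete}: there $\chi_{vi}(K_n)=n+2$, whereas here we are constrained by the additional $s=1$ condition that collapses all of $I_2(v)$ to a single color, forcing the incidence part of the coloring to behave like a coloring of $K_n^2=K_n$ disjoint in palette from the vertex coloring, which matches the lower bound $2n$.
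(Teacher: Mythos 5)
Your proposal is correct and follows essentially the same route as the paper: reduce via Theorem \ref{start2} to $\chi(\mathcal{T}_{vi,1}(G))=2n$, use the fact that a graph's chromatic number equals its order only when it is complete, and read off completeness of $G$ from Definition \ref{Tvi1} (the paper uses the mixed pairs $(u,1),(v,2)$ where you use the pairs $(u,1),(v,1)$, an immaterial difference).
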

		\begin{proof}{Firstly, suppose that $G\cong K_n$. Since $diam(G)=1$, by Definition \ref{Tvi1}, any two vertices $(u,i)$ and $(v,j)$ of $\mathcal{T}_{vi,1}(G)$ are adjacent. So $\chi_{vi,1}(G)=\chi(\mathcal{T}_{vi,1}(G))=|V(\mathcal{T}_{vi,1}(G))|=2n$. Conversely, suppose that $\chi_{vi,1}(G)=2n$. Therefore, $\chi(\mathcal{T}_{vi,1}(G))=2n=|V(\mathcal{T}_{vi,1}(G))|$ which implies that $\mathcal{T}_{vi,1}(G)$ is a complete graph. Now for any two distinct vertices $u$ and $v$ of $G$, the vertices $(u,1)$ and $(v,2)$ of $\mathcal{T}_{vi,1}(G)$ are adjacent and so $d_G(u,v)=1$. Thus $G$ is a complete graph.
		}\end{proof}
		A dynamic coloring of a graph $G$ is a proper coloring, in which each vertex neighborhood of size at least two receives at least two distinct colors. The dynamic chromatic number $\chi_d(G)$ is the least number of colors in such a coloring of $G$ \cite{Dynamic}. Akbari et al. proved the following theorem that we use to give a proper coloring for $\frac{3}{3}$-power of a regular bipartite graph. 
		\begin{theorem} {\em{\cite{Akbari}}}\label{dynamic}
			Let $G$ be a $k$-regular bipartite graph, where $k\geq 4$. Then, there is a $4$-dynamic coloring of $G$, using two colors for each part.
		\end{theorem}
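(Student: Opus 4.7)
The plan is to build a coloring $c:V(G)\to\{1,2,3,4\}$ whose restriction to $A$ uses only $\{1,2\}$ and whose restriction to $B$ uses only $\{3,4\}$. Since the two color-palettes are disjoint, properness is automatic. The dynamic requirement says that each vertex $v$ (of degree $k\geq 4\geq 2$) must see at least two colors in $N(v)$; as $N(v)$ sits entirely in the opposite part and can receive at most two colors, this just says each such neighborhood is bichromatic. The two subproblems---2-color $A$ so every $b\in B$ sees both of $\{1,2\}$, and 2-color $B$ so every $a\in A$ sees both of $\{3,4\}$---decouple by symmetry, so it suffices to solve one of them.

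Focus on the $B$-coloring. The task is to partition $B=B_3\sqcup B_4$ so that both $B_3$ and $B_4$ dominate $A$. Equivalently, I need a proper $2$-coloring (no monochromatic edge) of the $k$-uniform, $k$-regular hypergraph $\mathcal H=(B,\{N(a):a\in A\})$. For sufficiently large $k$ this is a textbook application of the symmetric Lovász Local Lemma: under a uniform random $2$-coloring of $B$, a given hyperedge $N(a)$ is monochromatic with probability $2^{1-k}$ and depends on the at most $k(k-1)$ other hyperedges sharing a vertex with it, so the LLL condition $e\cdot 2^{1-k}(k^2-k+1)\leq 1$ produces the desired partition.

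For the small values of $k$ in which the LLL bound is too weak, I would use K\"onig's edge-coloring theorem to write $E(G)=M_1\sqcup\cdots\sqcup M_k$ as a disjoint union of perfect matchings. Any pair $M_i\cup M_j$ is a $2$-regular bipartite graph, hence a disjoint union of even cycles, and alternating the colors $3,4$ along the $B$-vertices of each cycle is a valid $B$-coloring provided every cycle has length divisible by $4$, since then the two $M_i\cup M_j$-neighbors of any $a\in A$ (two $B$-steps apart along the cycle) receive different colors and therefore both colors are already present in $N(a)$. Bad cycles of length $\equiv 2\pmod 4$ are fixed by edge-swaps: swapping two $M_i$-edges in two distinct bad cycles merges them into a single good cycle, and the freedom provided by a third matching $M_\ell$ (available since $k\geq 4$) makes it possible to create a usable pair when a single $(i,j)$ does not itself admit one.

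The main obstacle is exactly this small-$k$ combinatorial analysis: one must argue that a finite sequence of swaps (or a clever choice of the initial pair of matchings) always terminates with every cycle of length divisible by $4$. The parity constraint---an in-cycle swap always splits a bad cycle into one bad and one good cycle rather than into two good ones---means bad cycles have to be paired for between-cycle merges, and managing this pairing is where the $k\geq 4$ hypothesis is essential; with only two or three matchings available the flexibility collapses, matching the fact that the statement actually fails for $k=2$ (cf.\ the $C_6$ discussion before Theorem \ref{cycles}).
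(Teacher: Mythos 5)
First, a point of reference: the paper does not prove this statement at all --- Theorem \ref{dynamic} is imported from \cite{Akbari} and used as a black box --- so there is no internal proof to compare your argument against; it has to stand on its own. Your reduction is the standard and correct one: with disjoint palettes on the two parts, properness is free and the problem splits into two independent instances of $2$-coloring a $k$-uniform, $k$-regular hypergraph with no monochromatic edge (Property~B).

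The gap is that neither of your two mechanisms covers $4\le k\le 8$, which is the entire content of the theorem. The symmetric Local Lemma condition $e\,2^{1-k}(k^2-k+1)\le 1$ first holds at $k=9$ (at $k=8$ it reads $57\le 128/e\approx 47.1$), so the whole burden falls on the matching argument, and that argument cannot work as stated. A cycle of length divisible by $4$ contains an even number of $B$-vertices, so when $|B|=n$ is odd (e.g.\ $K_{5,5}$) no sequence of swaps can produce a decomposition of $M_i\cup M_j$ into cycles all of length $\equiv 0\pmod 4$; moreover your merging swap replaces $a_1b_1,a_2b_2$ by $a_1b_2,a_2b_1$, which need not be edges of $G$, and you never verify that they are. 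Rescuing a vertex on a bad cycle would require invoking its other $k-2$ neighbours, which your alternating scheme does not control. The combinatorial core you are missing is precisely the known and nontrivial theorem that every $k$-uniform $k$-regular hypergraph with $k\ge 4$ is $2$-colorable (Alon--Bregman for $k\ge 8$, Thomassen for $k\ge 4$); this is what the cited proof rests on. The threshold is sharp: for the Heawood graph the hypergraph $\{N(a):a\in A\}$ is the Fano plane, which is not $2$-colorable, so the ``two colors per part'' statement genuinely fails at $k=3$ --- a sign that no soft or purely local argument will close the $4\le k\le 8$ window.
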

		\begin{theorem} {\em{\cite{bondy}}}\label{Hallregular}
			Every regular bipartite graph has a perfect matching.
		\end{theorem}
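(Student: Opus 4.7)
The plan is to deduce this classical fact from Hall's marriage theorem, which I assume is the tool used in \cite{bondy}. Let $G$ be a $k$-regular bipartite graph with bipartition $(A,B)$. First I would observe that $G$ has no perfect matching to talk about unless $|A|=|B|$, so I would start by verifying this equality. Counting edges in two ways, $|E(G)|=k|A|=k|B|$, and since $k\geq 1$ (the case $k=0$ being trivial since then both parts must be empty or the statement is vacuous), we get $|A|=|B|$.

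Next I would verify Hall's condition: for every $S\subseteq A$, $|N_G(S)|\geq |S|$. Let $E(S,N_G(S))$ denote the set of edges with one endpoint in $S$. On one hand, since every vertex of $S$ has degree $k$ and all its neighbors lie in $N_G(S)$, we have $|E(S,N_G(S))|=k|S|$. On the other hand, each vertex of $N_G(S)$ has degree $k$ in $G$, so the number of edges incident to $N_G(S)$ is exactly $k|N_G(S)|$, which is an upper bound for $|E(S,N_G(S))|$. Combining,
\[
k|S| \;=\; |E(S,N_G(S))| \;\leq\; k|N_G(S)|,
\]
and dividing by $k$ gives $|S|\leq |N_G(S)|$.

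With Hall's condition verified, Hall's marriage theorem yields a matching of $A$ into $B$ that saturates $A$. Since $|A|=|B|$, such a matching saturates $B$ as well and is therefore a perfect matching of $G$.

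The only potential obstacle is the base case $k=0$ (empty graph) and the implicit assumption that $G$ is nonempty and finite; these are handled trivially (an empty regular bipartite graph has the empty perfect matching provided $|A|=|B|=0$, which holds by the counting argument above). Beyond that, the argument is routine once Hall's theorem is in hand, which is why \cite{bondy} is cited.
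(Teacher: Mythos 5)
The paper offers no proof of this statement; it is quoted as a classical result from \cite{bondy}, so there is nothing internal to compare against. Your argument --- edge-counting to get $|A|=|B|$, then verifying Hall's condition via $k|S|=|E(S,N_G(S))|\leq k|N_G(S)|$ and invoking Hall's marriage theorem --- is the standard proof and is correct; the only blemish is the $k=0$ remark (a $0$-regular bipartite graph can have nonempty, unequal parts and then has no perfect matching), but this is immaterial here since the paper only invokes the theorem for $k$-regular graphs with $k\geq 4$.
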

		\begin{theorem}\label{regularbipartite}
			If $G=G(A,B)$ is a $k$-regular bipartite graph with $k\geq 4$ and $|A|=|B|=n$, then $\chi_{vi}(G)\leq \min\{n+3,2k\}$.
		\end{theorem}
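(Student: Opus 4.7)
We establish the two bounds $\chi_{vi}(G)\le 2k$ and $\chi_{vi}(G)\le n+3$ separately, by explicit constructions. The common preliminary step is the following: by repeated application of Theorem \ref{Hallregular}, the edge set $E(G)$ decomposes into $k$ perfect matchings $M_1,\dots,M_k$ (each time we remove a perfect matching, the remaining graph is $(k-1)$-regular bipartite, and we iterate). This gives a canonical indexing of the incidences at every vertex: each $u\in V(G)$ has exactly one incident edge in every $M_i$.

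For the bound $\chi_{vi}(G)\le 2k$, I would invoke Theorem \ref{dynamic} to obtain a $4$-dynamic coloring $\phi$ of $G$ with $\phi(A)=\{1,2\}$ and $\phi(B)=\{3,4\}$, in which every vertex sees both of the two colors of the opposite part among its neighbors. Color the terminal vertices by $\phi$, using only the four colors $\{1,2,3,4\}$. The incidences are then colored with the remaining palette $\{5,6,\dots,2k\}$ (of size $2k-4\ge k$, since $k\ge 4$) as follows: for $uv\in M_i$ with $u\in A$, the color of $(u,v)$ is a value $f_{\phi(u)}(i)$ and the color of $(v,u)$ is a value $g_{\phi(v)}(i)$, where $f_1,f_2,g_3,g_4:[k]\to\{5,\dots,2k\}$ are injective and chosen so that the $f$-images are disjoint from the $g$-images. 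This makes $I_1(u)$ and $I_1(v)$ rainbow and forces $c((u,v))\ne c((v,u))$; the consecutive-incidence constraint $c((u,v))\ne c((v,w))$ becomes the requirement $f_{\phi(u)}(i)\ne g_{\phi(v)}(j)$, which holds by the $f$--$g$ disjointness. The dynamic hypothesis is what lets us squeeze everything into $2k-4$ incidence colors: it guarantees that the two ``slots'' $\phi(u)\in\{1,2\}$ really do both occur in each neighborhood of $B$, forcing the disjointness to be genuinely needed, but also providing the flexibility to reuse the vertex colors $\{1,2,3,4\}$ without clashing with incidences.

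For the bound $\chi_{vi}(G)\le n+3$, I would relabel $B=\{b_1,\dots,b_n\}$ so that $M_1=\{a_ib_i:1\le i\le n\}$. Set $c(a_i)=i$ for each $i$, and reserve the three colors $\{n+1,n+2,n+3\}$ for the vertices of $B$ and for sporadic incidences. For each edge $a_ib_j\in M_r$ with $r\ge 2$, color $(a_i,b_j)$ using the label $j$ of the opposite endpoint and color $(b_j,a_i)$ using a suitable shift $\pi(i)$ of the opposite label. Because each matching $M_r$ is a bijection between $A$ and $B$, $I_1(a_i)$ and $I_1(b_j)$ become rainbow cliques (their colors enumerate the labels of the respective neighbors). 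The $M_1$-incidences $(a_i,b_i)$ and $(b_i,a_i)$---where the naive rule would force $c((a_i,b_i))=i=c(a_i)$---are handled by two of the extra colors, and any residual collisions along consecutive incidences are absorbed by the third extra color.

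The main obstacle in both constructions is the consecutive-incidence condition $c((u,v))\ne c((v,w))$: it couples the coloring rules at the two outer vertices $u,w$ through the shared middle vertex $v$, and therefore cannot be verified locally. In the $2k$ construction the dynamic partition of Theorem \ref{dynamic} is essential because it guarantees that the ``class'' of $u$ as seen from $v$ genuinely varies, so disjointness between the $f$- and $g$-ranges cannot be circumvented and is truly the right global condition. In the $n+3$ construction the Latin-square-like structure of the edge decomposition together with three ``spare'' colors is what makes the bookkeeping close out; the delicate step is verifying that three extras are indeed enough to patch every collision that can arise from the label-based rule.
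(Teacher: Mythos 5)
Both halves of your plan contain genuine gaps, and in both cases the obstruction is precisely the point where the paper's construction does something different. For the bound $2k$: you colour all incidences from the palette $\{5,\dots,2k\}$ of size $2k-4$ and ask that the images of $f_1,f_2$ be disjoint from the images of $g_3,g_4$. This is arithmetically impossible: each $f_{\phi(u)}$ and each $g_{\phi(v)}$ is injective on $[k]$, so already $|\mathrm{im}(f_1)|+|\mathrm{im}(g_3)|=2k>2k-4$. Even locally, for a fixed $u\in A$ the value $f_{\phi(u)}(i)$ must avoid the whole $k$-element set $\mathrm{im}(g_{\phi(v_i)})$ (where $v_i$ is the $M_i$-neighbour of $u$), leaving at most $k-4$ admissible values for each of the $k$ pairwise-distinct colours needed on $I_1(u)$. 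The construction cannot be carried out with $2k-4$ incidence colours unless some incidences reuse the four vertex colours, and that reuse is exactly what the $4$-dynamic colouring of Theorem \ref{dynamic} is for in the paper's proof: every vertex has neighbours of both colours of the opposite part, so two of its $k$ first incidences can safely take colours from $\{1,2,3,4\}$ (swapped so as to avoid their heads), and only $k-2$ fresh colours per side, i.e.\ $2(k-2)$ in total, are needed. In your scheme the dynamic hypothesis does no work at all, which is a sign the reduction to $2k$ colours has been lost.

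For the bound $n+3$: colouring $a_i$ with $i$ and $(a_i,b_j)$ with $j$ forces, via the consecutive-incidence constraint $(b_j,a_i)\not\sim$-colour-$(a_i,b_m)$, the condition $\pi(i)\notin\{m: b_m\in N(a_i)\}$ for every $i$ (and a symmetric condition for $\pi^{-1}$). When $k=n$ (e.g.\ $G=K_{n,n}$, where $n+3$ is the operative bound for $n\geq 3$) this set is all of $[n]$ and no such $\pi$ exists; the number of collisions is then $\Theta(nk)$ and cannot be "absorbed by the third extra colour'' without destroying the rainbow property of the cliques $I_1(a_i)$. The paper sidesteps this entirely by \emph{not} giving the $A$-vertices distinct colours: all of $A\setminus\{v_1\}$ receives the single colour $1$ and all of $B\setminus\{u_1\}$ the single colour $n+1$ (legitimate since each part is independent), the labels $2,\dots,n$ are spent on the monochromatic sets $I_2(v_i)$ and $I_2(u_i)$, and the three extra colours $n+1,n+2,n+3$ handle $v_1,u_1$ and the matched incidences $(v_i,u_i),(u_i,v_i)$. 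Your label-based idea for the incidences is close in spirit, but as stated neither half closes, and the fixes are not cosmetic.
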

		\begin{proof}
			{Suppose that $V(A)=\{v_1,\ldots,v_n\}$ and $V(B)=\{u_1,\ldots,u_n\}$. Since $G$ is a $k$-regular bipartite graph, by Theorem~\ref{Hallregular}, $G$ has a perfect matching $M=\{v_1u_1,\ldots,v_nu_n\}$. First, we present a $(n+3)$-proper coloring for $G^{\frac{3}{3}}$.
				For $2\leq i\leq n$ color two $t$-vertices $v_i$ and $u_i$ with colors $1$ and ${n+1}$, respectively.
				Also, for $u\in N(v_1)$ and $v\in N(u_1)$ color $i$-vertices $(u,v_1)$ and $(v,u_1)$ with colors $1$ and $n+1$, respectively.\\
				Now, for $2\leq i\leq n$, for $u\in N(v_i)\setminus\{u_i\}$ and $v\in N(u_i)\setminus\{v_i\}$, assign color $i$ to $i$-vertices $(u,v_i)$ and $(v,u_i)$.
				It can be easily seen that all the $t$-vertices of $G$ except $\{v_1,u_1\}$ and all  $i$-vertices of $G$ except $\{(v_i,u_i),(u_i,v_i)|\hspace{1mm}2\leq i\leq n\}$ have colors in $[n+1]$.
				Now, assign colors  $n+2$ and $n+3$ to $t$-vertices $v_1$ and $v_2$, respectively. Also, for $2\leq i\leq n$, then color  $i$-vertices $(v_i,u_i)$ and $(u_i,v_i)$ with colors $n+2$ and $n+3$, respectively.
				With a simple review, you can see  that this coloring is a proper coloring for $G^{\frac{3}{3}}$ with $(n+3)$ colors.\\
				In the following, we present a $(2k)$-proper coloring for $G^{\frac{3}{3}}$. 
				By Theorem~\ref{dynamic}, there is a $4$-dynamic coloring of $G$, named $c$, using two colors in each part. Without loss of generality, suppose that each $t$-vertex in $A$ has one of colors $1$ and $2$ and each $t$-vertex in  $B$ has one of colors $3$ or $4$.
				For $1\leq i\leq n$, consider the $t$-vertex $u_i\in V(B)$ with set of neighbors $N(u_i)$. Note that, $c$ is a $4$-dynamic coloring, so $u_i$ has at least one neighbor of each colors $1$ and $2$. Let $u$ and $u'$ be two $t$-vertices in $N(u_i)$, where $c(u)=1$ and $c(u')=2$. First, assign colors $1$ and $2$ to  $i$-vertices $(u_i,u')$ and $(u_i,u)$, respectively. Then,  for $w\in N(u_i)\setminus \{u,u'\}$, color all  $i$-vertices $(u_i,w)$ with different colors in $\{5,\ldots,{k+2}\}$.
				Similarly, for a $t$-vertex $v_i\in V(A)$, Suppose that $v$ and $v'$ are neighbors of $v$ with colors $3$ and $4$, respectively.  Color the  $i$-vertices  $(v_i,v')$ and $(v_i,v)$  with colors $3$ and $4$, respectively. Then,  for $w'\in N(v_i)\setminus \{v,v'\}$, color all  $i$-vertices $(v_i,w')$ with different colors in $\{k+3,\ldots,2k\}$. It can be easily seen that, the presented coloring is a proper $(2k)$-coloring for $G^{\frac{3}{3}}$.
		}\end{proof}
		Since any bipartite graph with maximum degree $\Delta$ can be extended to a $\Delta$-regular bipartite graph, we have the following corollary.
		\begin{corollary}
			If $G$ is a bipartite graph with maximum degree $\Delta$, then $\chi_{vi}(G)\leq 2\Delta$.
		\end{corollary}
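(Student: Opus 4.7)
The plan is to reduce to the regular case already handled by Theorem~\ref{regularbipartite}. First I would record the general subgraph monotonicity of $\chi_{vi}$: whenever $H$ is a subgraph of $H'$, the $3$-subdivision $H^{\frac{1}{3}}$ is an isometric subgraph of $H'^{\frac{1}{3}}$, so $H^{\frac{3}{3}}$ is an induced subgraph of $H'^{\frac{3}{3}}$, and therefore $\chi_{vi}(H)\leq\chi_{vi}(H')$. Consequently it suffices to embed the given bipartite graph $G$ with $\Delta(G)=\Delta\geq 4$ as a subgraph of some $\Delta$-regular bipartite graph $H$, for then $\chi_{vi}(G)\leq\chi_{vi}(H)\leq 2\Delta$ by the $2k$-side of the bound in Theorem~\ref{regularbipartite}.

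For the embedding I would use the standard König-theorem construction. Write $G=G(A,B)$ and, by adding isolated vertices to whichever part is smaller, assume $|A|=|B|$. By König's edge coloring theorem, $\chi'(G)=\Delta$, so $E(G)$ decomposes into $\Delta$ matchings $M_1,\ldots,M_\Delta$, each viewed as a (possibly non-perfect) matching in the balanced bipartite graph on $A\cup B$. In such a matching the unsaturated vertices in $A$ and in $B$ have equal cardinality, so we can pair them off arbitrarily to extend each $M_i$ to a perfect matching $M_i'$ using only new edges between $A$ and $B$. The edge-disjoint union $H=M_1'\cup\cdots\cup M_\Delta'$ is then a $\Delta$-regular bipartite graph on $A\cup B$ that contains $G$ as a subgraph.

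Combining the two steps yields $\chi_{vi}(G)\leq\chi_{vi}(H)\leq 2\Delta$, which is exactly the claim for $\Delta\geq 4$. The only genuine obstacle is that Theorem~\ref{regularbipartite} is stated under the hypothesis $k\geq 4$, so the argument above does not formally cover $\Delta\in\{1,2,3\}$; I would handle these either by citing the prior results already collected in Section~\ref{sec1} (the $\Delta\leq 2$ analysis showing $\chi_{vi}(G)\leq 5$, and the bound $\chi_{vi}(G)\leq 7$ for subcubic graphs), or by restricting the statement to $\Delta\geq 4$. Everything else in the proof is an immediate two-line consequence of monotonicity plus the regular embedding, so the heart of the argument is really Theorem~\ref{regularbipartite}.
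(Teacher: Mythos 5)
Your proposal follows exactly the route the paper intends: the paper's entire justification is the one sentence ``any bipartite graph with maximum degree $\Delta$ can be extended to a $\Delta$-regular bipartite graph,'' followed by an appeal to the $2k$ bound of Theorem~\ref{regularbipartite}, so your monotonicity remark plus the K\H{o}nig-type embedding is just the same argument with the details filled in. Two comments. First, your worry about the hypothesis $k\geq 4$ is well founded, and in fact more serious than you suggest: the corollary as stated is simply false for $\Delta\leq 2$ (the paper itself records that $\chi_{vi}(C_6)=5>4$ and that $\omega(K_2^{\frac{3}{3}})=4>2$), and your proposed fallback citations only give $\chi_{vi}\leq 5$ for $\Delta=2$ and $\chi_{vi}\leq 7$ for $\Delta=3$, which do not recover $2\Delta$; so restricting to $\Delta\geq 4$ (or supplying a separate argument for $\Delta=3$) is not optional but necessary. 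Second, a minor technical point in your embedding: pairing off the unsaturated vertices ``arbitrarily'' when completing each $M_i$ to a perfect matching can produce parallel edges (the same pair may be chosen for two different $M_i$, or a chosen pair may already be an edge of $G$ lying in some other $M_j$), in which case the union is not a simple $\Delta$-regular graph; the standard fix is to regularize iteratively, repeatedly joining two nonadjacent deficient vertices in opposite parts (duplicating the graph when no such pair exists), or to accept a regular bipartite multigraph and check that Theorem~\ref{regularbipartite} survives. Neither issue is addressed by the paper, so your version is, if anything, more careful than the original.
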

		A derangement of a set $S$ is a bijection $\pi : S\rightarrow S$ such that no element $x\in S$ has $\pi(x)=x$.
		\begin{theorem}
			Let  $n,m\in\mathbb{N}$ and $n\geq m$. Then  $\chi_{vi}(K_{n,m})=\left\{\begin{array}{ll} n+2 & m\leq 2\\ n+3 & m\geq 3\end{array}\right.$.
		\end{theorem}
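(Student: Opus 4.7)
The plan is to establish matching lower and upper bounds in each regime. Since $\omega(K_{n,m}^{\frac{3}{3}}) = \Delta(K_{n,m}) + 2 = n+2$ (for $n \geq 2$), the inequality $\chi_{vi}(K_{n,m}) \geq n+2$ is automatic, so the interesting lower bound is $\chi_{vi}(K_{n,m}) \geq n+3$ when $m \geq 3$, and each upper bound must be realized by an explicit coloring.

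For the lower bound when $m \geq 3$, I argue by contradiction: suppose $c$ is a proper $(n+2)$-coloring of $K_{n,m}^{\frac{3}{3}}$. Every $b \in B$ has degree $n = \Delta$, so Lemma \ref{firstlem} forces $|c(I_2(b))| = 1$; let $\alpha_b$ denote this common color. For $b \neq b'$, the incidences $(a,b)$ and $(a,b')$ both lie at distance $1$ from $a$ in the $3$-subdivision, hence are adjacent in $K_{n,m}^{\frac{3}{3}}$, so the colors $\{\alpha_b : b \in B\}$ are pairwise distinct. Now fix $b_0 \in B$: since $I_1[b_0]$ is an $(n+1)$-clique in $K_{n,m}^{\frac{3}{3}}$ whose color set avoids $\alpha_{b_0}$, one has $\{c((b_0,a)) : a \in A\} = [n+2] \setminus \{c(b_0),\alpha_{b_0}\}$. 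On the other hand, for every $b' \in B$ the vertices $(b_0,a)$ and $(a,b')$ are joined by a path of length at most $3$ through $a$ in $K_{n,m}^{\frac{1}{3}}$, hence are adjacent in $K_{n,m}^{\frac{3}{3}}$, so $c((b_0,a)) \neq \alpha_{b'}$. Combining both observations forces $\{\alpha_{b'} : b' \in B \setminus \{b_0\}\} \subseteq \{c(b_0),\alpha_{b_0}\}$; since the $\alpha_{b'}$ are pairwise distinct and none equals $\alpha_{b_0}$, this yields $m-1 \leq 1$, contradicting $m \geq 3$.

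For the upper bounds, I dispose of each value of $m$ in turn. The case $m = 1$ follows from Corollary \ref{tree} applied to the star $K_{n,1}$, and $n = m = 2$ is $C_4$, already covered by Theorem \ref{cycles}. For $m = 2$ with $n \geq 3$, I construct an explicit $(n+2)$-coloring using a derangement $\sigma$ of $[n]$: set $c(b_1) = 2$, $c(b_2) = 1$, $c((a,b_1)) = 1$ and $c((a,b_2)) = 2$ for every $a \in A$, $c((b_1,a_i)) = i+2$, $c((b_2,a_i)) = \sigma(i)+2$, and pick $c(a_i)$ from the set $\{3,\ldots,n+2\} \setminus \{i+2,\sigma(i)+2\}$, which is nonempty because $n \geq 3$ and $\sigma(i) \neq i$. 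A routine case check confirms propriety, with the relevant non-trivial $3$-step adjacencies resolving against the chosen colors. For $m \geq 3$, I observe that $K_{n,m}^{\frac{3}{3}}$ is a subgraph of $K_{n,n}^{\frac{3}{3}}$ on the common vertex set (distances in $K_{n,m}^{\frac{1}{3}}$ are at least those in $K_{n,n}^{\frac{1}{3}}$), so $\chi_{vi}(K_{n,m}) \leq \chi_{vi}(K_{n,n})$; for $n \geq 4$, Theorem \ref{regularbipartite} supplies an $(n+3)$-coloring of $K_{n,n}^{\frac{3}{3}}$ which restricts to $K_{n,m}^{\frac{3}{3}}$. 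The only remaining case is $n = m = 3$; here the template from the proof of Theorem \ref{regularbipartite} (based on a perfect matching $\{a_ib_i\}_{i=1}^{3}$, colors $1$ and $4$ on the off-matching vertices and incidences, colors $5,6$ on the two incidences of each matching edge) still yields a valid $6$-coloring, as a direct check confirms.

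The main obstacle is the lower bound for $m \geq 3$: this is the only step that genuinely uses the hypothesis $m \geq 3$, and its subtlety lies in recognizing that the full bijectivity forced on $I_1(b_0)$ collides with the global constraint $c((b_0,a)) \notin \{\alpha_{b'}\}_{b' \in B}$, squeezing $m-1$ distinct colors into the $2$-element set $\{c(b_0),\alpha_{b_0}\}$. The upper-bound constructions are essentially mechanical once the correct combinatorial hooks (the derangement for $m = 2$, the reduction to $K_{n,n}$ for $m \geq 3$) are in place.
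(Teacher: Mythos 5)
Your proof is correct and follows essentially the same route as the paper: the clique bound and Lemma \ref{firstlem} for the lower bounds, a derangement coloring for $m=2$, and the reduction to $K_{n,n}$ for $m\geq 3$; your lower-bound argument is a cleaner global version (forcing the $m$ pairwise-distinct colors $\alpha_b$ into the two-element set $\{c(b_0),\alpha_{b_0}\}$) of the paper's argument, which pins the common colors of $I_2(u_2)$ and $I_2(u_3)$ to the single color $1$ and then notes these two sets induce a $1$-regular subgraph. You are also right to treat $n=m=3$ separately, since Theorem \ref{regularbipartite} is stated only for $k\geq 4$ and the paper invokes it for $K_{3,3}$ without comment, although the $(n+3)$-coloring in its proof indeed does not use that hypothesis.
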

		\begin{proof}{
				Let $A=\{v_1,\ldots,v_n\}$ and $B=\{u_1,\ldots,u_m\}$ be two parts of $K_{n,m}$ and $G=K_{n,m}^{\frac{3}{3}}$. If $m=1$, then $K_{n,1}$ is a tree and by Corollary~\ref{tree}, we have $\chi(G)=n+2$. Now suppose that $m=2$. Since $\omega(G)=\Delta+2$, $\chi(G)\geq n+2$. It suffices to present a proper $(n+2)$-coloring for $G$ with colors in $[n+2]$. Suppose that $\pi$ is a derangement of the set $[n]$. Assign color $n+1$ to the vertices of $\{u_1\}\cup I_2(u_2)$ and color $n+2$ to the vertices of $u_2\cup I_2(u_1)$. Also for $j\in[n]$, color $i$-vertices $(u_1,v_j)$ and $(u_2,v_j)$ with color $j$ and vertex $v_j$ with color $\pi(j)$. The given coloring is a proper $(n+2)$-coloring of  $G$.\\
				In the case $m\geq 3$, suppose that $c$ is a proper coloring of $G$ with colors $1,\ldots,n+2$. Since the vertices of $I_1[u_1]$ are pairwise adjacent in $G$, there are exactly $n+1$ colors  in $c(I_1[u_1])$. Without loss of generality, suppose that $c(u_1)=1$ and $c(I_1(u_1))=[n+1]\setminus\{1\}$. By Theorem~\ref{firstlem}, all $i$-vertices of $I_2(u_1)$ have the same color $n+2$.\\
				Now, consider $t$-vertices $u_2$ and $u_3$. All $i$-vertices of $I_2(u_2)$ and all $i$-vertices of $I_2(u_3)$, have the same color and their colors are different from $\{2,\ldots,n+2\}$. Hence, the only available color for these vertices is the color $1$. But the subgraph of $G$ induced by $I_2(u_2)\cup I_2(u_3)$ is 1-regular and so for their coloring we need to two colors, a contradiction.\\
				To complete the proof, it suffices to show that $\chi((K_{n,n})^{\frac{3}{3}})\leq n+3$. Since $n\geq 3$, $n+3\leq 2n$ and by Theorem~\ref{regularbipartite}, we have $\chi(G)\leq\chi({K_{n,n}}^{\frac{3}{3}})\leq \min\{n+3,2n\}=n+3$. Hence, $\chi(G)=n+3$.
		}\end{proof}
		\begin{theorem}\label{vi1Knm}
			Let $n,m\in\mathbb{N}\setminus\{1\}$. Then $\chi_{vi,1}(K_{n,m})=n+m$.
		\end{theorem}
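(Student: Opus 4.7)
The plan is to prove the two bounds separately, with the lower bound falling out of the $\mathcal{T}_{vi,1}$-reformulation and the upper bound requiring an explicit construction.

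For the lower bound, I would apply Theorem~\ref{start2} so that it suffices to show $\chi(\mathcal{T}_{vi,1}(K_{n,m})) \geq n+m$. Since $K_{n,m}$ has diameter $2$, any two distinct vertices $u, v$ of $K_{n,m}$ satisfy $1 \leq d_G(u,v) \leq 2$. By Definition~\ref{Tvi1}, this means $(u,2)$ and $(v,2)$ are adjacent in $\mathcal{T}_{vi,1}(K_{n,m})$ for every pair of distinct $u,v \in V(K_{n,m})$. Hence $\{(v,2) : v \in V(K_{n,m})\}$ is a clique of size $n+m$, forcing $\chi_{vi,1}(K_{n,m}) \geq n+m$.

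For the upper bound, I would construct a $vi$-simultaneous $(n+m,1)$-coloring $c$ of $K_{n,m}$ explicitly. Write $A = \{v_1,\ldots,v_n\}$, $B = \{u_1,\ldots,u_m\}$, and let $\pi_A$ and $\pi_B$ be derangements of $[n]$ and $[m]$ respectively, which exist because $n,m \geq 2$. Define
\[
c((u_j,v_i)) = i, \qquad c((v_i,u_j)) = n+j, \qquad c(v_i) = \pi_A(i), \qquad c(u_j) = n + \pi_B(j).
\]
By construction every element of $I_2(v_i)$ receives color $i$ and every element of $I_2(u_j)$ receives color $n+j$, so $|c(I_2(w))|=1$ for every $w \in V(K_{n,m})$; this is the $(\cdot,1)$ condition. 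The $t$-vertex colors of $A$ lie in $[n]$ and those of $B$ lie in $[n+1,n+m]$, so they are proper on $K_{n,m}$ and, thanks to the derangements, $c(v_i) \neq i$ and $c(u_j) \neq n+j$ ensures compatibility with the incidence colors at each vertex.

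What remains is to verify that no two adjacent incidences receive the same color. The only way two incidences of the form $(u_j,v_i)$ and $(u_{j'},v_{i'})$ (both in the first-incidence orientations from $B$ to $A$) can be adjacent in $\mathcal{I}(K_{n,m})$ is if they share the same tail, i.e.\ $j=j'$, in which case $i \neq i'$ and the colors $i$ vs.\ $i'$ differ; a symmetric check handles the $(v_i,u_j)$ incidences; and an incidence of each orientation has a color in disjoint ranges, so those cross-comparisons are automatic. This confirms $\chi_{vi,1}(K_{n,m}) \leq n+m$. The only slightly delicate point is the observation that $(u_j,v_i)$ and $(u_{j'},v_i)$ with $j \neq j'$ are \emph{not} adjacent in $\mathcal{I}(K_{n,m})$ (they share neither an edge nor a common tail), which is precisely what allows all of $I_2(v_i)$ to collapse onto a single color and makes the bound $n+m$ achievable.
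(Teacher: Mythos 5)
Your proposal is correct and follows essentially the same route as the paper: the lower bound via the clique $\{(v,2):v\in V(K_{n,m})\}$ in $\mathcal{T}_{vi,1}(K_{n,m})$ is exactly the paper's observation that $(K_{n,m})^2\cong K_{n+m}$, and your coloring with the two derangements is the paper's coloring, just verified directly on $V(G)\cup I(G)$ rather than on $\mathcal{T}_{vi,1}(K_{n,m})$.
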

		\begin{proof}{
				Since $(K_{n,m})^2\cong K_{n+m}$, $K_{n+m}$ is a subgraph of $\mathcal{T}_{vi,1}(K_{n,m})$ and so $\chi_{vi.1}(K_{n,m})=\chi(\mathcal{T}_{vi,1}(K_{n,m}))\geq n+m$. Now we show that $\chi(\mathcal{T}_{vi,1}(K_{n,m}))\leq n+m$. Let $V=\{v_1,\ldots,v_n\}$ and $U=\{u_1,\ldots,u_m\}$ be two parts of $K_{n,m}$, $\pi$ be a derangement of $[n]$ and $\sigma$ be a derangement of $[m]$. Easily one can show that the following vertex coloring of $\mathcal{T}_{vi,1}(K_{n,m})$ is proper.
				\[c(x)=\left\{\begin{array}{llll} i & x=(v_i,2)\\ n+j & x=(u_j,2)\\ \pi(i) & x=(v_i,1)\\ n+\sigma(j) & x=(u_j,1).\end{array}\right.\]
		}\end{proof}
		As we see, there are some graphs such as trees and $K_{n,2}$ with maximum degree $\Delta$, whose $\frac{3}{3}$-power has chromatic number equal to $\Delta+2$. The problem is Characterizing all graphs with the desired property.
		\begin{problem}{\rm
				Charactrize all graphs $G$ with maximum degree at least 3 such that $\chi(G^{\frac{3}{3}})=\omega(G^{\frac{3}{3}})=\Delta(G)+2$.
		}\end{problem}
		{\bf Acknowledgements.} This research was in part supported by a grant from IPM (No.1400050116).

	\end{document}